\documentclass[a4paper,11pt]{amsart}
\usepackage{mathrsfs}
\usepackage[all]{xy}
\usepackage{amsmath,amssymb,amscd,bbm,amsthm,mathrsfs,dsfont}
\usepackage{graphicx}
\newtheorem{thm}{Theorem}[section]
\newtheorem{lem}{Lemma}[section]
\newtheorem{cor}{Corollary}[section]
\newtheorem{prop}{Proposition}[section]
\newtheorem{rem}{Remark}[section]

\setlength{\textwidth}{160mm} \addtolength{\hoffset}{-16mm}
\begin{document}
\numberwithin{equation}{section}

\title[Conformal Qc geometry, Spherical qc Manifolds and Convex Cocompact Subgroups]{On Conformal Qc Geometry,  Spherical Qc Manifolds and Convex
Cocompact Subgroups of ${\rm Sp}{(n+1,1)}$}
\author{Yun Shi and Wei Wang}

\begin{abstract}Conformal qc geometry of spherical qc manifolds 
are investigated. We construct the qc Yamabe operators on  qc manifolds, which are covariant
under the conformal qc transformations. A qc manifold is  scalar positive,  negative or  vanishing if and only if  its qc Yamabe invariant is positive,
negative or zero, respectively. On a scalar positive spherical qc manifold, we can construct the Green function of the qc Yamabe operator, which can be
applied to   construct a conformally  invariant tensor.  It becomes a spherical qc metric if the qc positive mass conjecture is true. Conformal qc  geometry
of spherical qc manifolds can be applied to study convex cocompact subgroups of ${\rm Sp}(n+1,1).$ On a spherical qc manifold   constructed from such
a discrete subgroup, we construct a    spherical qc metric  of Nayatani type. As a corollary, we prove that such a spherical qc manifold is scalar positive,
negative or  vanishing if and only if the Poincar\'e  critical exponent of the discrete subgroup is less than,  greater than or equal to  $2n+2$,
respectively.
\end{abstract}
\thanks{Supported by National Nature Science Foundation in China (No. 11171298)\\
Department of Mathematics, Zhejiang University,  Zhejiang 310027, P.R. China
E-mail: hzxjhs1987@163.com;
Department of Mathematics, Zhejiang University,  Zhejiang 310027, P.R. China
E-mail: wwang@zju.edu.cn}
\maketitle

\tableofcontents

\section{Introduction}
A locally conformally flat manifold is a Riemannian manifold which is locally conformally equivalent to the Euclidean space. Its complex counterpart is
the spherical CR manifold, which is a pseudohermitian manifold locally conformally equivalent to  the  Heisenberg group. A conformal  class of the locally
conformally flat manifolds (or spherical CR manifolds) can be described as a manifold whose  coordinate charts are given by open subsets of the Euclidean
space (or the Heisenberg group) and elements of ${\rm SO}(n+1,1)$ (or ${\rm SU}(n+1,1)$) as transition maps.  In this paper, we will investigate their
quaternionic counterpart: the spherical qc manifolds.

The notion of a qc manifold was introduced  by Biquard \cite{biquard} as the quaternionic counterpart of the notion of a  pseudohermitian  manifold in the
CR geometry. A qc manifold is denoted by  $(M,g,\mathbb{Q}),$ where $M$ is a $(4n+3)$-dimensional manifold, $g$ is the Carnot-Carath\'eodory metric on a
codimension three distribution $H$ and $\mathbb{Q}$ is a rank-three bundle of quaternionic structures. As the Tanaka-Webster connection in the CR
geometry, the Biquard connection is the canonical connection for qc manifolds \cite{biquard}. $(M,g,\mathbb{Q})$ is said to be conformal to
$(M,\tilde{g},\mathbb{Q}),$ if $\tilde{g}=\phi g$ for some positive function $\phi.$ A qc manifold $(M,g,\mathbb{Q})$ is called \emph{spherical} if it is
locally conformally  qc equivalent to an open set of  the quaternionic Heisenberg group. We are interested in the conformal classes of spherical qc
manifolds. A conformal class can be described as a manifold   whose  coordinate charts are given by open subsets of the quaternionic Heisenberg group and
elements of ${\rm Sp}(n+1,1)$ as transition maps. This is a topological description of a conformal class of spherical qc manifolds. The purpose of this
paper is to investigate conformal qc geometry for spherical qc manifolds.

Spherical qc manifolds are abundant. We can construct spherical qc manifolds by taking  connected sums. The connected sum of two spherical qc manifolds is
constructed as follows (cf. $\S$ 2.5). Suppose that $M_{(1)}$ and $M_{(2)}$ are two   spherical qc manifolds with one puncture $\eta_{i}\in
M_{(i)},i=1,2$, each. Let  $U_{1}$ and $U_{2}$ be   neighborhoods of $\eta_{1}$ and $\eta_{2}$, respectively. Now identify  $U_{i}$  with the ball of
quaternionic   Heisenberg group with radius $2$, centered at origin. We remove the closed balls with radius $t<1$. Let $U_{i}(t,1),\ i=1,2,$ be
corresponding rings with inner radius $t$ and outer radius $1$.  There exist conformal qc transformations mapping $U_{1}(t,1)$ to $U_{2}(t,1),$ which
identify the inner boundary of $U_{1}(t,1)$ with the outer boundary $U_{2}(t,1)$ and vise versa. We glue $M_{1}$ and $M_{2}$ by such a  transformation to
get a new spherical qc manifold.

As in the locally conformally flat case and the spherical CR case, {convex cocompact} subgroups of ${\rm Sp}(n+1,1)$ provide lots of examples of spherical qc
manifolds. ${\rm Sp}(n+1,1)$ acts isometrically on the unit ball $B^{4n+4}$ of the $(n+1)$-dimensional quaternionic space, when the ball is equipped with
the quaternionic  hyperbolic metric. Also ${\rm Sp}(n+1,1)$ acts conformally on its boundary, the sphere $S^{4n+3}$ with the standard qc metric. Let
$\Gamma$ be a convex cocompact subgroup of ${\rm Sp}(n+1,1).$ The \emph{limit set} of $\Gamma$ is
$$
\Lambda(\Gamma)=\overline{\Gamma{q}}\cap S^{4n+3},
$$ for $q\in B^{4n+4},$ where $\overline{\Gamma{q}}$ is the closure of the orbit of $q$ under $\Gamma,$ and
$$\Omega(\Gamma)= S^{4n+3}\backslash\Lambda(\Gamma)$$
is the maximal open set where $\Gamma$ acts discontinuously.  It is known that $\Omega(\Gamma)/\Gamma$ is a compact spherical qc manifold  when $\Gamma$
is a convex cocompact subgroup of ${\rm Sp}(n+1,1)$. Modelled on the theory of Riemann surfaces, the basic problems for locally conformally flat manifolds
are their classification and the moduli space. It is so for spherical CR manifolds and spherical qc manifolds. It is well known that each   compact
Riemann surface has a metric with constant curvature, which only depends on the conformal  class of Riemann surfaces. This invariant metric was
generalized by Habermann-Jost \cite{habermann} \cite{habermann1} to the locally conformally flat case. They constructed a canonical metric on each scalar
positive locally conformally flat manifold, which only depends on the conformal class of such manifolds. This construction was generalized to the
spherical CR case by the second author \cite{wang1}.

The uniformization problem is to show each scalar positive spherical qc manifold can be constructed from some convex cocompact subgroup of ${\rm
Sp}(n+1,1)$. The problem was solved in \cite{yau} for locally conformally flat manifolds and in \cite{paul} for spherical CR manifolds. See \cite{Izeki}
and \cite{Wang4} for discussion of their moduli spaces.

In Section 2, we recall the definitions  of a  qc manifold, the  Biquard connection and the ball model of the quaternionic hyperbolic space. For our
purpose later, we describe explicitly the flat qc structure on the quaternionic Heisenberg group and the standard spherical qc metric on the sphere. The
group ${\rm Sp}(n+1,1)$ is the group of all $(n+2)\times(n+2)$ quaternionic matrices which preserve the following hyperhermitian form:
\begin{align}\label{2.31}
Q(q,p)=-{q}_{1}\bar{p}_{1}-\cdots-{q}_{n+1}\bar{p}_{n+1}+
{q}_{n+2}\bar{p}_{n+2},
\end{align}
where $q=(q_{1},\cdots,q_{n+2}),\ p=(p_{1},\cdots,p_{n+2})\in \mathbb{H}^{n+2}$. In this paper we only consider left quaternionic vector spaces. So a
matrix acts on a vector from right.  ${\rm Sp}(n+1,1)$ acts  isometrically on the ball model of the quaternionic hyperbolic space and  conformally on the
sphere with the standard qc structure. We write down such a conformal action explicitly by calculating the conformal factor in terms of matrix elements of
an element of ${\rm Sp}(n+1,1)$. We also show that a qc manifold $(M,g,\mathbb{Q})$ is  {spherical} if and only if  it is a manifold with coordinates
charts $\{(U_{i},\phi_{i})\}$, where $\phi_{i}:U_{i}\rightarrow S^{4n+3}$  and transition  maps are given by the induced action of elements of ${\rm
Sp}(n+1,1).$

Since topologically defined spherical qc manifolds depends on the conformal class of qc metrics, we need to study conformal qc geometry. In particular, we
are interested in constructing conformal invariants. We will construct conformally covariant operators, since some quantity associated to such a operator
will provide us a conformal invariant. In this way we can obtain invariants for topologically defined spherical qc manifolds. Given a qc manifold
$(M,g,\mathbb{Q}),$ let $\Delta_{g,\mathbb{Q}}$ be the SubLaplacian operator and let  $s_{g,\mathbb{Q}}$ be its scalar curvature.
In Section 3, we construct the \emph{qc Yamabe operator}
$$L_{g,\mathbb{Q}}=b_{n}\Delta_{g,\mathbb{Q}}+s_{g,\mathbb{Q}},$$  where $b_{n}=4\frac{Q+2}{Q-2},$ and $Q=4n+6$ is the homogeneous dimension of $M.$  This
is a conformally covariant operator. Namely, under the conformal change
\begin{align}\label{1.0}
\tilde{g}=\phi^{\frac{4}{Q-2}}g,
\end{align}  it satisfies the following  transformation law
\begin{align*}
L_{\tilde{g},\mathbb{Q}}f=\phi^{-\frac{Q+2}{Q-2}}L_{g,\mathbb{Q}}(\phi f),\nonumber
\end{align*}
for any smooth real function $f.$ Denote by $G_{g,\mathbb{Q}}(\xi,\cdot)$ the Green function of the  qc Yamabe operator with the pole at $\xi$, i.e.
$$L_{g,\mathbb{Q}}G_{g,\mathbb{Q}}(\xi,\cdot)=\delta_{\xi},$$
where $\delta_{\xi}$ is the Dirac function at the point $\xi.$

As in the  Riemannian case \cite{yau} and the CR case \cite{wang1}, we will see that for a connected compact qc manifold $(M,g,\mathbb{Q})$,  one and only
one of the following cases holds: there is  a qc metric $\tilde{g}$ conformal to $g$  which have either positive,   negative or zero scalar curvature
everywhere. The manifold is called scalar positive, scalar negative or scalar vanishing respectively.  This is equivalent to its first eigenvalue of the
qc Yamabe operator or the qc Yamabe invariant  is positive, negative or zero, respectively. This is a property of conformal classes. On a scalar positive
spherical qc manifold, the Green function of the qc Yamabe operator $L_{g,\mathbb{Q}}$ always exists, and
\begin{align}\label{222} \rho_{g,\mathbb{Q}}(\xi,\eta)&=\frac{1}{\phi(\xi)\phi(\eta)}
\cdot\frac{C_{Q}}{\|\xi^{-1}\eta\|^{Q-2}},\quad \xi,\eta\in \mathscr{H}^{n},
\end{align}
is its singular part, if we identify a neighborhood of $\xi$ with an open set of the quaternionic Heisenberg group with the qc metric
$g=\phi^{\frac{4}{Q-2}}g_{0}.$ Here $g_{0}$ is the standard qc metric on the quaternionic Heisenberg group $\mathscr{H}^{n}$, $\|\cdot\|$ is the norm on
the quaternionic Heisenberg group and $C_{Q}$ is a positive constant  (\ref{146}). Moreover the limit  \begin{equation}\label{1.3}
\begin{aligned}
\mathcal{A}_{g,\mathbb{Q}}(\xi)&=\lim_{\eta\rightarrow \xi}|G_{g,\mathbb{Q}}(\xi,\eta)-\rho_{g,\mathbb{Q}} (\xi,\eta)|^{\frac{1}{Q-2}}
\end{aligned}\end{equation}
of the nonsingular part of the Green function exists, and $\mathcal{A}_{g,\mathbb{Q}}^{2}g$ is  conformal  invariant, i.e.
$$\mathcal{A}_{\tilde{g},\mathbb{Q}}^{2}\tilde{g}
=\mathcal{A}_{g,\mathbb{Q}}^{2}g,$$
under the conformal transformation  (\ref{1.0}). We also prove the transformation law of Green functions under the  conformal change (\ref{1.0}):
\begin{align*}
G_{\tilde{g},\mathbb{Q}}(\xi,\eta)=\frac{1}{\phi(\xi)\phi(\eta)}
G_{g,\mathbb{Q}}
(\xi,\eta).
\end{align*}

As in the conformal geometry and the CR geometry, we propose the following positive mass conjecture.

The {{qc positive mass conjecture}}: {\it
Let $(M,g,\mathbb{Q})$ be a compact scalar positive  spherical qc manifold with ${\rm dim} M=4n+3$. Then,\\
\noindent 1. For each $\xi\in M$, there exists a local qc diffeomorphism $C_{\xi}$ from a neighborhood of $\xi$ to the quaternionic Heisenberg group
$\mathscr{H}^{n}$ such that $C_{\xi}(\xi)=\infty$ and  $$\left(C_{\xi}^{-1}\right)^{*}\left(G_{g,\mathbb{Q}}(\xi,\cdot)
^{\frac{4}{Q-2}}g\right)
=h^{\frac{4}{Q-2}}g_{0},$$ where  $$h(\eta)=1+A_{g,\mathbb{Q}}(\xi)\|\eta\|^{-Q+2}+O(\|\eta\|^{-Q+1}),$$
near $\infty,$ and $g_{0}$ is the standard qc metric on $\mathscr{H}^{n}.$
$A_{g,\mathbb{Q}}(\xi)$ is called the {qc mass} at the point $\xi$.

\noindent 2. $A_{g,\mathbb{Q}}(\xi)$ is nonnegative and is zero if and only if $(M,g,\mathbb{Q})$ is qc equivalent to the standard sphere.}

Li  gave the statement of the CR positive mass theorem  in \cite{Li}. A complete proof of CR positive mass theorem was given by  Cheng, Chiu  and Yang  in
\cite{paul} recently.

This conjecture implies that $\mathcal{A}_{g,\mathbb{Q}}$ is non-vanishing, so $\mathcal{A}_{g,\mathbb{Q}}^{2}g$ is a conformally invariant qc metric. By
estimating the qc Yamabe invariant of the connected sum, we prove that some connected sum of two scalar positive
spherical qc manifolds is also scalar positive. So scalar positive spherical qc manifolds are abundant.

In Section 5, we recall the definitions of  the convex cocompact subgroups of ${\rm Sp}(n+1,1)$ and the Patterson-Sullivan measure.
The \emph{Poincar\'e critical exponent} $\delta(\Gamma)$ of a discrete subgroup $\Gamma$ is defined as
\begin{align*}
\delta(\Gamma)=\inf\left\{s>0;\sum_{\gamma\in\Gamma}e^{-\frac{1}{2}s\cdot d(p,\gamma (q))}<\infty\right\},
\end{align*}
where $p$ and $q$ are two points in the ball ${B}^{4n+4}$ and $d(\cdot,\cdot)$ is the quaternionic hyperbolic distance on ${B}^{4n+4}$. $\delta(\Gamma)$
is independent of the particular choice of points $p$ and $q$.
Fix a point $q\in{B}^{4n+4}$, the series
$$\sum_{\gamma\in\Gamma}e^{-\frac{s}{2}\cdot d(p,\gamma (q))}$$
converges for $s>\delta(\Gamma)$ and any $p\in B^{4n+4}$, and diverges for any $s<\delta(\Gamma)$. For any convex cocompact subgroup $\Gamma$ of ${\rm
Sp}(n+1,1)$, there exists a probability measure $\mu_{\Gamma}$ supported on 	its limit set $\Lambda(\Gamma),$ called  the  \emph{Patterson-Sullivan
measure}, such that
\begin{align*}
\gamma^{*}\mu_{\Gamma}=|\gamma'|^{\delta(\Gamma)}\mu_{\Gamma}
\end{align*}
for any $\gamma\in\Gamma$ (cf. \cite{Corlette}), where $|\gamma'|$ is the conformal factor.

The conformal qc geometry of spherical qc manifolds can be applied to study convex cocompact subgroups of ${\rm Sp}(n+1,1).$ Let $G_{S}(\xi,\cdot)$ be the
Green function of the qc Yamabe operator   with the pole at $\xi$ on the standard sphere  $S^{4n+3}.$ In Section 6, by integrating Green function with
respect to the Patterson-Sullivan measure, we define a $C^{\infty}$ function on the open set  $\Omega(\Gamma):$
\begin{align*}
\phi_{\Gamma}(\xi)=\left(\int_{\Lambda(\Gamma)}
G_{S}^{\kappa}(\xi,\zeta)d\mu_{\Gamma}(\zeta)\right)^{\frac{1}{\kappa}},\quad
\kappa=\frac{2\delta(\Gamma)}{Q-2}.
\end{align*}
Then $$g_{\Gamma}=\phi_{\Gamma}^{\frac{4}{Q-2}}g$$ is invariant under $\Gamma,$ and so defines  a metric on $\Omega(\Gamma)/\Gamma$. This is the qc
generalization of Nayatani's canonical metric in conformal geometry \cite{Nayatani}.
We prove that if $\delta(\Gamma)<2n+2$ (resp. $\delta(\Gamma)>2n+2$, resp. $\delta(\Gamma)=2n+2$), then the scalar curvature of
$(\Omega(\Gamma)/\Gamma,{g}_{\Gamma},\mathbb{Q})$ is positive (resp. negative, resp. zero) everywhere. This result was proved for locally conformally flat
manifolds by Nayatani \cite{Nayatani}.
For spherical CR manifolds it was proved by Nayatani \cite{Naya} and Wang \cite{wang1} independently.

In the Appendix, we give a simple proof of the Green function of the qc Yamabe operator on the quaternionic Heisenberg group.

\section{Qc manifolds}
\subsection{Qc manifolds}
A  \emph{quaternionic contact (qc) manifold} $(M,g,\mathbb{Q})$ is a $(4n+3)$-dimensional manifold  $M$ with a codimension three distribution $H$
locally given as the kernel of a $\mathbb{R}^{3}$-valued $1$-form $\Theta=(\theta_{1},\theta_{2},\theta_{3})$, on which $g$ is a Carnot-Carath\'eodory
metric. In addition, $H$ has an {\it$ {\rm Sp}(n){\rm Sp}(1)$-structure}, that is, it is equipped with  a rank-three bundle
\begin{align*}
\mathbb{Q}=\{aI_{1}+bI_{2}+cI_{3}|a^{2}+b^{2}+c^{2}=1\},
\end{align*}
which consists of endomorphisms of $H$ locally generated by three almost complex structures $I_{1},I_{2},I_{3}$ on $H$ satisfying the commutating relation
of quaternions:
\begin{align}\label{2.00}
I_{1}I_{2}=-I_{2}I_{1}=I_{3},\ I_{1}^{2}=I_{2}^{2}=I_{3}^{2}=-id_{|H}.
\end{align}
They are hermitian
compatible with the metric:
\begin{align}\label{2.01}
g(I_{s}\cdot,I_{s}\cdot)=g(\cdot,\cdot),
\end{align} and satisfy  the  compatibility condition
\begin{align}\label{2,1}
g(I_{s}X,Y)=d\theta_{s}(X,Y),
\end{align}   for any  $X,Y\in H, s=1,2,3$. We denote $\mathbb{I}:=({I_{1},I_{2},I_{3}}).$

We say $(M,g,\mathbb{Q})$ is {\it conformal} to $(M,\tilde{g},\mathbb{Q})$ if $\tilde{g}=\phi^{\frac{4}{Q-2}}g$ for some smooth positive function $\phi$
on $M$. The conformal  class of qc manifolds is denoted by $(M,[g],\mathbb{Q}).$ In the definition of qc manifolds, the $\mathbb{R}^{3}$ valued $1$-form
$\Theta$ is unique up to a rotation by the following lemma.

\begin{lem}{\rm(cf. p. 100 in {\cite{Ivanov}})}\label{l2.1}
Let $(M,g,\mathbb{Q})$ be a qc manifold. If $\Theta$ and $\Theta'$ are two compatible $\mathbb{R}^{3}$-valued $1$-form such that ${\rm Ker}(\Theta)={\rm
Ker}(\Theta')=H$ locally, then we have  $\Theta'=\Psi\Theta$ for some $\Psi\in {\rm SO}(3)$-valued   smooth functions $\Psi$.
\end{lem}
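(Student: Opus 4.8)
The plan is to reduce the statement to the classical fact that two compatible orthonormal triples of almost complex structures on a quaternionic-Hermitian vector bundle differ by an ${\rm SO}(3)$-valued function, and then to transport this through the relation between $\Theta$ and $\mathbb{I}$ supplied by the compatibility condition \eqref{2,1}. First I would work at a fixed point $p\in M$ and on a neighborhood where both $\Theta=(\theta_1,\theta_2,\theta_3)$ and $\Theta'=(\theta_1',\theta_2',\theta_3')$ are defined. Since ${\rm Ker}(\Theta)={\rm Ker}(\Theta')=H$, each $\theta_s'$ annihilates $H$, and likewise each $\theta_s$; hence on the quotient line bundle $TM/H$ (which is three-dimensional) the two triples $\{\theta_s\}$ and $\{\theta_s'\}$ are two frames of the same rank-three bundle of $1$-forms, so there is a unique ${\rm GL}(3,\mathbb{R})$-valued smooth function $\Psi$ with $\Theta'=\Psi\Theta$. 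The content of the lemma is that $\Psi$ in fact takes values in ${\rm SO}(3)$.

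The key step is to use \eqref{2,1} to pin down $\Psi$. For the form $\Theta$ there exist, by the $ {\rm Sp}(n){\rm Sp}(1)$-structure, local almost complex structures $I_1,I_2,I_3$ on $H$ with $d\theta_s(X,Y)=g(I_sX,Y)$ for $X,Y\in H$; the same applies to $\Theta'$, giving $I_1',I_2',I_3'$ with $d\theta_s'(X,Y)=g(I_s'X,Y)$. Now both $\{I_s\}$ and $\{I_s'\}$ are compatible orthonormal bases of (local generators of) the \emph{same} rank-three bundle $\mathbb{Q}$ of quaternionic structures: they satisfy the quaternion relations \eqref{2.00}, they are $g$-Hermitian \eqref{2.01}, and $\mathbb{Q}$ is intrinsic to $(M,g,\mathbb{Q})$. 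A standard linear-algebra argument — comparing $\{I_s\}$ and $\{I_s'\}$ fiberwise, each a positively oriented $g$-orthonormal frame of the $3$-dimensional space ${\rm span}\{I_1,I_2,I_3\}$ inside ${\rm End}(H)$, with orientation fixed by \eqref{2.00} — shows $I_s'=\sum_t \Phi_{st} I_t$ for a unique smooth $\Phi\in {\rm SO}(3)$. (One must check $\Phi\in {\rm SO}(3)$ rather than merely ${\rm O}(3)$: the determinant is $+1$ because both triples realize the same quaternionic orientation $I_1I_2=I_3$.) Then for $X,Y\in H$,
\begin{align*}
\sum_t \Phi_{st}\, d\theta_t(X,Y)
= \sum_t \Phi_{st}\, g(I_t X,Y)
= g(I_s' X,Y)
= d\theta_s'(X,Y),
\end{align*}
so $d\theta_s' = \sum_t \Phi_{st}\, d\theta_t$ when both sides are restricted to $H$.

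Finally I would upgrade this from an identity of the restrictions $d\theta_s|_H$ to the identity $\Theta'=\Phi\Theta$ of the $1$-forms themselves. Set $\omega_s := \theta_s' - \sum_t \Phi_{st}\theta_t$; each $\omega_s$ annihilates $H$, so it is a section of the annihilator $H^\perp\subset T^*M$, and the displayed computation together with $d\Phi$-terms shows that $d\omega_s$ vanishes on $H\times H$. One then argues, exactly as in the cited reference, that a $1$-form annihilating $H$ whose differential vanishes on $H\times H$ must itself vanish: using that $H$ is bracket-generating (the Reeb-type directions are generated by brackets of sections of $H$), the value of $\omega_s$ on a vector field of the form $[X,Y]$ with $X,Y\in\Gamma(H)$ is $-d\omega_s(X,Y)=0$, and such brackets together with $H$ span $TM$; hence $\omega_s\equiv 0$, i.e.\ $\Theta'=\Phi\Theta$. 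Taking $\Psi=\Phi$ gives the claim, and $\Psi$ is smooth since $\Phi$ is. The main obstacle is the orientation bookkeeping in the fiberwise step — making precise that "the same $\mathbb{Q}$" forces $\Phi\in{\rm SO}(3)$ and not just ${\rm O}(3)$ — and the bracket-generating argument that promotes the equality of $2$-forms on $H$ to an equality of $1$-forms on $TM$; both are where one genuinely uses the qc (as opposed to purely algebraic) hypotheses, and this is precisely the content attributed to \cite{Ivanov}.
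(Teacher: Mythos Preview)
Your argument is correct, but it is more roundabout than the paper's. You first obtain $\Theta'=\Psi\Theta$ with $\Psi\in{\rm GL}(3)$, then \emph{separately} produce $\Phi\in{\rm SO}(3)$ relating the almost complex structures, and finally invoke a bracket-generating argument to prove $\Psi=\Phi$. The paper short-circuits this: having written $\theta_s'=\sum_t\Psi_{st}\theta_t$, it simply applies $d$ and restricts to $H$. Since $d\Psi_{st}\wedge\theta_t$ vanishes on $H\times H$, one reads off $g(I_s'X,Y)=\sum_t\Psi_{st}\,g(I_tX,Y)$ directly, i.e.\ $I_s'=\sum_t\Psi_{st}I_t$ with the \emph{same} matrix $\Psi$; then the quaternion relations \eqref{2.00} force $\Psi\in{\rm SO}(3)$. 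So the paper never needs to match two a priori different matrices, and in particular avoids the bracket-generating step entirely. Your approach does buy something conceptually --- it makes explicit that the ${\rm SO}(3)$ ambiguity lives in the bundle $\mathbb{Q}$ and is transported back to $\Theta$ --- but the paper's route is shorter and uses only the differentiation you already set up. Your caution about ${\rm SO}(3)$ versus ${\rm O}(3)$ is well placed and is exactly what underlies the paper's terse final sentence.
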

\begin{proof}
Write $\Theta=(\theta_{1},\theta_{2},\theta_{3})$ and $\Theta'=(\theta'_{1},\theta'_{2},\theta'_{3})$. The condition ${\rm Ker}(\Theta) = {\rm
Ker}(\Theta')=H$ implies that $\theta'_{s}=\sum_{t=1}^{3}\Psi_{st}\theta_{t}$ for some ${\rm GL}(3)$-valued function $(\Psi_{st}).$ Applying the exterior
derivative, we find that $d\theta'_{s}=\sum_{t=1}^{3}(d\Psi_{st}\wedge\theta_{t}+\Psi_{st} d\theta_{t}),$ which restricted to $H$ gives
$g(I'_{s}X,Y)=\sum_{t=1}^{3}\Psi_{st}g(I_{t}X,Y)$ for any $X,Y\in H.$ Consequently we have
\begin{align}\label{321}
I'_{s}=\sum_{t=1}^{3}\Psi_{st}I_{t}.
\end{align}
So we must have $(\Psi_{st})\in {\rm SO}(3)$.
\end{proof}
Given a qc manifold $(M,{g},\mathbb{Q})$, there exists a canonical connection defined by Biquard
in \cite{biquard} when dim$M>7$, and by Duchemin in \cite{David} for the $7$-dimensional case. It is called \emph{Biquard connection} now.
\begin{thm}{\rm(cf. \S2.1.A. in \cite{biquard})}
If $(M^{4n+3},{g},\mathbb{Q})$ (for {\rm dim}$M>7$) has a qc structure  and $g$ is a Carnot-carath\'eodory metric on $H$, then there exists a
unique connection $\nabla$ on $H$ and a unique supplementary subspace $V$ of $H$ in
$TM$, such that\\
(i) $\nabla$ preserves the decomposition  $H\oplus V$ and the metric;\\
(ii) for $X,Y\in H$, one has $T_{X,Y}=-[X,Y]_{V}$;\\
(iii) $\nabla$ preserves the $ {\rm Sp}(n){\rm Sp}(1)$-structure on $H$;\\
(iv) for $R\in V$, the endomorphism $\cdot\rightarrow(T_{R,\cdot})_{H}$ of $H$
 lies in the orthogonal of $\mathfrak{sp}_{n}\oplus\mathfrak{sp}_{1}$;\\
(v) the connection on $V$ is induced by the natural identification of $V$
with the subspace $\mathfrak{sp}_{1}$ of the endomorphisms of $H$.
\end{thm}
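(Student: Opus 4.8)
The plan is to follow the classical template for canonical connections (Levi--Civita, Tanaka--Webster): first show that conditions (i)--(v) leave no freedom, so that uniqueness follows once existence is established, and then produce the connection by explicit formulas and verify (i)--(v) directly. Concretely I would work in two stages: first fix the supplementary subspace $V$; then, with $V$ fixed, fix the connection $\nabla$ on $H\oplus V$. Existence comes out of the explicit construction, and uniqueness from the fact that each step is a normalization admitting exactly one solution.

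\medskip\noindent\textbf{Step 1: the vertical space.} By the compatibility relation $g(I_sX,Y)=d\theta_s(X,Y)$ and Lemma \ref{l2.1}, the data $(\Theta,g,\mathbb{Q})$ fixes the almost complex structures $I_1,I_2,I_3$ on $H$ up to an $\mathrm{SO}(3)$ rotation. I would look for \emph{Reeb} vector fields $\xi_1,\xi_2,\xi_3$ singled out by the normalization
\begin{align*}
\theta_s(\xi_t)=\delta_{st},\qquad \left(\iota_{\xi_s}d\theta_t+\iota_{\xi_t}d\theta_s\right)\big|_{H}=0\qquad(1\le s,t\le 3),
\end{align*}
and put $V:=\langle\xi_1,\xi_2,\xi_3\rangle$, so that $TM=H\oplus V$. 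Existence and uniqueness of the $\xi_s$ is a pointwise linear-algebra problem: expanding $\iota_{\xi_s}d\theta_t\big|_H$ through $g$ and the $I_r$ turns the normalization into a linear system whose invertibility reduces to an algebraic lemma on $\mathrm{Sp}(n)\mathrm{Sp}(1)$-equivariant endomorphisms of $H\cong\mathbb{H}^n$; this lemma holds exactly when $n\ge2$, i.e.\ $\dim M>7$. One then checks that any complement $V$ compatible with (i)--(v) must coincide with the span of such normalized Reeb fields, so $V$ is uniquely determined.

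\medskip\noindent\textbf{Step 2: the connection.} With $TM=H\oplus V$ now fixed, (v) gives $\nabla$ on $V$ at once, through the identification $V\cong\mathfrak{sp}_1\subset\mathrm{End}(H)$. For the part mapping $H\times H$ into $H$, combining metricity (i) with the vanishing of the $H$-component of $T_{X,Y}$ contained in (ii) yields the Koszul-type identity
\begin{align*}
2g(\nabla_XY,Z)={}&Xg(Y,Z)+Yg(X,Z)-Zg(X,Y)\\
&+g([X,Y]_H,Z)-g([X,Z]_H,Y)-g([Y,Z]_H,X),
\end{align*}
which determines $\nabla_XY$ with no further freedom; the real content is that, thanks again to $g(I_sX,Y)=d\theta_s(X,Y)$, the connection so defined does preserve the $\mathrm{Sp}(n)\mathrm{Sp}(1)$-structure, i.e.\ (iii) holds. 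For the mixed directions, (i), (iii) and (iv) pin down $\nabla_R|_H$ for $R\in V$: since $(T_{R,\cdot})_H=\nabla_R(\cdot)-[R,\cdot]_H$ modulo $V$, demanding that this endomorphism of $H$ lie in the $\mathfrak{sp}_n\oplus\mathfrak{sp}_1$-complement forces $\nabla_R$ to be the $\mathfrak{sp}_n\oplus\mathfrak{sp}_1$-component of $[R,\cdot]_H$, again a unique choice. Plugging all three formulas back into (i)--(v) gives existence, and reversing each normalization gives uniqueness.

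\medskip\noindent\textbf{Main obstacle.} The hard points are algebraic. The Reeb-field lemma in Step 1 genuinely fails when $\dim M=7$ ($n=1$), which is why that case is excluded and requires Duchemin's extra integrability hypothesis \cite{David}. The main point in Step 2 is the claim that the connection fixed by the Koszul identity preserves $\mathbb{Q}$: a priori, asking for a metric connection with prescribed $H$-torsion \emph{and} with $\mathrm{Sp}(n)\mathrm{Sp}(1)$-holonomy looks over-determined, and what rescues it is precisely the qc compatibility $g(I_sX,Y)=d\theta_s(X,Y)$, which ties $d\Theta$ to the $I_s$ and forces $\nabla I_s$ into the span of $I_1,I_2,I_3$. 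Once this is in hand, the mixed-direction normalization (iv) is routine linear algebra and the whole of (i)--(v) follows; I expect the bulk of the work to be this $\mathbb{Q}$-compatibility verification.
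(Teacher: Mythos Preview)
The paper does not prove this theorem: it is stated with the attribution ``(cf.\ \S2.1.A.\ in \cite{biquard})'' and no argument is given, since the result is Biquard's and is quoted as background. So there is nothing in the paper to compare your proposal against.

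That said, your outline is the standard route to the Biquard connection and matches Biquard's original construction in spirit: first normalize the Reeb fields to fix $V$ (this is where $n\ge 2$ enters), then use a Koszul-type identity on $H$, the identification $V\cong\mathfrak{sp}_1$ for the vertical part, and the orthogonal projection onto $(\mathfrak{sp}_n\oplus\mathfrak{sp}_1)^{\perp}$ for the mixed torsion. Your identification of the two genuine difficulties---the failure of the Reeb normalization when $n=1$, and the verification that the Koszul connection actually preserves $\mathbb{Q}$---is accurate.
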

A mapping $F:(\tilde{M},\tilde{g},\tilde{\mathbb{Q}})\rightarrow (M,g,\mathbb{Q})$ is called a \emph{conformal qc  mapping} if
\begin{align}\label{574}
F^{*}g=\phi\tilde{g},\quad  F^{*}\mathbb{Q}=\tilde{\mathbb{Q}},
\end{align}
for some positive function $\phi.$
The pull back $F^{*}A$ of an endomorphism  $A$ of $H$ is defined as
\begin{align}\label{322}
F^{*}A(X):=F_{*}^{-1}\left[A(F_{*}X)\right]
\end{align}
for any $X\in H.$
If $F$ is invertible and  $F$ and $F^{-1}$ are both qc mappings, $F$ is called a {\it qc diffeomorphism}.
If $f$ is a qc diffeomorphism, we must have $f_{*}\tilde{H}=H$, where $H$ and $\tilde{H}$ are horizontal subbundles of $TM$ and $T\tilde{M}$,
respectively.

\subsection{The quaternionic Heisenberg group}\label{s2.2}

The simplest  qc manifold is the \emph{quaternionic Heisenberg group} $\mathscr{H}^{n}=\mathbb{H}^{n}\oplus\rm{Im}\mathbb{H}$, whose multiplication is
given by
\begin{align*}
(y,t)\cdot(y',t')=(y+y',t+t'+2{\rm{Im}} ({y}\bar{y'})),
\end{align*}
where $y,{y'}\in \mathbb{H}^{n}$ and ${t}=t_{1}\mathbf{i}+t_{2}\mathbf{j}+t_{3}\mathbf{k},\
{t'}={t}=t'_{1}\mathbf{i}+t'_{2}\mathbf{j}+t'_{3}\mathbf{k}\in \rm{Im\mathbb{H}}$. The conjugation of a quaternion number
$x_{1}+x_{2}\mathbf{i}+x_{3}\mathbf{j}+x_{4}\mathbf{k}$ is $x_{1}-x_{2}\mathbf{i}-x_{3}\mathbf{j}-x_{4}\mathbf{k}.$ The neutral element is $(0,0)$ and the
inverse of $(y,{t})$ is $(-y,-{t})$.
The norm of the quaternionic Heisenberg group $\mathscr{H}^{n}$ is defined by
\begin{align}\label{124}
\|(y,{t})\|:=(|y|^{4}+|{t}|^{2})^{\frac{1}{4}}.
\end{align}

We have the following automorphisms of $\mathscr{H}^{n}$:\\
(1) \emph{dilations}:
\begin{align}
D_{\delta}:(y,{t})\longrightarrow(\delta y,\delta^{2}{t}),\ \delta>0;
\end{align}
(2) \emph{left translations}:
\begin{align}\label{2.25}
\tau_{(y',{t'})}:(y,{t})\longrightarrow
(y',{t'})\cdot(y,{t});
\end{align}
(3) \emph{rotations}:
\begin{align}\label{33}
{U}:(y,{t})\longrightarrow (yU,{t}),\ {\rm for} \ U\in {\rm Sp}(n),
\end{align}
where 
\begin{equation}\label{eq:Sp(n)}
  {\rm Sp}(n)=\{U\in {\rm GL}(n,\mathbb{H})|{U\bar{U}^{t}}=I_{n}\}; 
\end{equation}

(4) The \emph{inversion}:
\begin{align}\label{44}
R:(y,{t})\longrightarrow \left(-(|y|^{2}-t)^{-1}y,
\frac{-{t}}{|y|^{4}+|{t}|^{2}}\right);
\end{align}\\
(5) ${\rm Sp}(1)$ acts on $\mathscr{H}^{n}$ as:
\begin{align}\label{66}
\sigma:(y,{t})\longrightarrow (\sigma y,\sigma{t}\sigma^{-1}),
\end{align}
where the action on the first factor is left multiplication by $\sigma\in\mathbb{H}$ with $|\sigma|=1,$ while the action on the second factor  is
isomorphism with ${\rm SO}(3)$.

Note that for $x=x_{1}+x_{2}\textbf{i}+x_{3}\textbf{j}+x_{4}\textbf{k}$ and $x'=x'_{1}+x'_{2}\textbf{i}+x'_{3}\textbf{j}+x'_{4}\textbf{k},$ we have
\begin{equation}\label{584}
\begin{aligned}
{\rm Im}(x\bar{x'})&={\rm Im}\{(x_{1}+x_{2}\textbf{i}+x_{3}\textbf{j}+x_{4}\textbf{k})
(x'_{1}-x'_{2}\textbf{i}-x'_{3}\textbf{j}-x'_{4}\textbf{k})\}\\
&=(-x_{1}x'_{2}+x_{2}x'_{1}-x_{3}x'_{4}+x_{4}x'_{3})\textbf{i}
+(-x_{1}x'_{3}+x_{3}x'_{1}+x_{2}x'_{4}-x_{4}x'_{2})\textbf{j}\\
&\quad+(-x_{1}x'_{4}+x_{4}x'_{1}-x_{2}x'_{3}+x_{3}x'_{2})\textbf{k}
=:\sum_{k,j=1}^{4}b_{kj}^{s}x_{k}x'_{j}\textbf{i}_{s}
\end{aligned}
\end{equation}
(cf. (2.15) in \cite{wang2}), where $b_{kj}^{s}$ is the $(k,j)$-th entry of the following matrices $b^{s}:$
\begin{equation}\label{2.14}
\begin{aligned}
b^{1}:&=\left(\begin{array}{cccc} 0 & -1 & 0 &0\\ 1& 0& 0& 0\\ 0& 0&0& -1\\0 &0& 1 &0\end{array}\right), \quad   b^{2}:=\left(\begin{array}{cccc} 0 & 0 &
-1 &0\\ 0& 0& 0& 1\\ 1& 0&0& 0\\0 &-1& 0 &0\end{array}\right),\\
b^{3}:&=\left(\begin{array}{cccc} 0 & 0 & 0 &-1\\ 0& 0& -1& 0\\ 0& 1&0& 0\\1 &0& 0 &0\end{array}\right).
\end{aligned}
\end{equation}
It is easy to see that matrices $b^{1},b^{2},b^{3}$ satisfy the commutating relation of quaternions:
\begin{align}\label{2.144}
(b^{1})^{2}=(b^{2})^{2}=(b^{3})^{2}=-id,\qquad b^{1}b^{2}b^{3}=-id.
\end{align}
By (\ref{584}), the multiplication of the quaternionic Heisenberg group in terms of  real variables can  written as
 (cf.  \cite{wang2})
\begin{align*}
(y,{t})\cdot({y'},{t'})=\left(y+y',t_{s}+t_{s}'+2\sum_{l=0}^{n-1}
\sum_{j,k=1}^{4}
b_{kj}^{s}y_{4l+k}y'_{4l+j}\right),
\end{align*}
where $s=1,2,3,\ y=(y_{1},y_{2},\cdots,y_{4n}),\ y'=(y_{1}',y'_{2},\cdots,y'_{4n})\in \mathbb{R}^{4n},\ t=(t_{1},t_{2},t_{3}),\
t'=(t'_{1},t'_{2},t'_{3})\in \mathbb{R}^{3}.$
We denote
\begin{align}\label{2.43}
Y_{4l+j}:=\frac{\partial}{\partial y_{4l+j}}+2\sum_{s=1}^{3}\sum_{k=1}^{4}b^{s}_{kj}y_{4l+k}
\frac{\partial}{\partial t_{s}},
\end{align}
$l=0,\ldots, n-1,\ j=1,\ldots ,4.$ They are left invariant vector fields on the quaternionic Heisenberg group $\mathscr{H}^{n}$.

The horizontal subspace $H_{0}:=span\{Y_{1},\cdots,Y_{4n}\}$ generates the corresponding Lie algebra of the quaternionic Heisenberg group. The standard
$\mathbb{R}^{3}$-valued contact form of the group is
\begin{align}\label{654}
2\Theta_{0}:=dt-y\cdot d\bar{y}+dy\cdot\bar{y}.
\end{align}
If we write $ \Theta_{0}=(\theta_{0;1},\theta_{0;2},
\theta_{0;3}),$ then we have
\begin{align}\label{215}
2\theta_{0;s}=dt_{s}-2\sum_{l=0}^{n-1}\sum_{j,k=1}^{4}b_{kj}^{s}
y_{4l+k}dy_{4l+j},\quad s=1,2,3,
\end{align}
by using (\ref{584}) again. Then Ker$\Theta_{0}=H_{0}.$
The standard Carnot-Carath\'eodory metric on the group is defined as
\begin{align*}
g_{0}(Y_{\alpha},Y_{\beta})=2\delta_{\alpha\beta},
\end{align*}
for $\alpha,\beta =1,\cdots,4n.$  We set $\mathbb{Q}_{0}:=\{aI_{1}+bI_{2}+cI_{3}|a^{2}+b^{2}+c^{2}=1\},$ where transformations $I_{s},\ s=1,2,3,$ on
$H_{0}$ are given by
\begin{align*}
I_{s}Y_{4l+k}=\sum_{j=1}^{4}b_{jk}^{s}
Y_{4l+j},
\end{align*}
for $l=0,\cdots,n-1,\ k=1,2,3,4.$ It is direct to check $I_{1},I_{2}$ and $I_{3}$ satisfying the commutating relation of quaternions in (\ref{2.00}).
Recall that  the wedge product of  $1$-forms $\omega_{1}$ and $\omega_{2}$ is given by \begin{align}\label{345}
(\omega_{1}\wedge\omega_{2})(X,Y):
=\omega_{1}(X)\omega_{2}(Y)-
\omega_{1}(Y)\omega_{2}(X),
  \end{align}
for any vector field $X$ and $Y.$ It is easy to see that
\begin{align*}
d\theta_{0;s}(Y_{4l'+k},Y_{4l+j})
&=-\sum_{a=0}^{n-1}\sum_{j',k'=1}^{4}b_{k'j'}^{s}
dy_{4a+k'}\wedge dy_{4a+j'}(Y_{4l'+k},Y_{4l+j}) \\ &=-2b_{kj}^{s}\delta_{ll'}
=g_{0}(I_{s}Y_{4l'+k},Y_{4l+j}),
\end{align*}
since $b^{s}$ is antisymmetric.  Thus, $g_{0}$ is compatible with $\Theta_{0}.$ So $(\mathscr{H}^{n},g_{0},\mathbb{Q}_{0})$ is a qc manifold.

\subsection{The   quaternionic hyperbolic space and the standard qc structure on the sphere}
The {\it quaternionic projective space} $\mathbb{H}P^{n+1}$ of dimension $n+1$ is the set of left quaternionic lines in $\mathbb{H}^{n+2}$. More
precisely,
$$
\mathbb{H}P^{n+1}:=(\mathbb{H}^{n+2}\backslash\{0\})/\sim,
$$
where $\sim$ is the equivalent relation: $(p'_{1},\cdots,p'_{n+2})\sim(q'_{1},\cdots,q'_{n+2})\in \mathbb{H}^{n+2}$ if there is a non-zero quaternion
number $\lambda$ such that $$(p'_{1},\cdots,p'_{n+2})=( \lambda q'_{1},\cdots, \lambda q'_{n+2}).$$
Let $P:\mathbb{H}^{n+2}\backslash\{0\}\rightarrow \mathbb{H}P^{n+1}$ be the canonical projection onto the quaternionic projective space.
Under the induced action of ${\rm Sp}(n+1,1)$ on $\mathbb{H}P^{n+1}$, there are three invariant subsets
\begin{equation*}
\begin{aligned}
D_{+}:=\{q'\in\mathbb{H}P^{n+1};Q(q',q')>0\},\\
D_{0}:=\{q'\in\mathbb{H}P^{n+1};Q(q',q')=0\},\\
D_{-}:=\{q'\in\mathbb{H}P^{n+1};Q(q',q')<0\}.
\end{aligned}
\end{equation*}
Then, as a homogeneous space for  ${\rm Sp}(n+1,1)$,  $D_{+}$ is equivalent to the \emph{quaternionic hyperbolic space}.
In this case we must have $q_{n+2}\neq0.$  So a point in  $D_{+}$ is equivalent to $(q,1)$ for some $q\in\mathbb{H}^{n+1}$, i.e. $
({q'^{-1}_{n+2}}{q'_{1}},\cdots,{q'^{-1}_{n+2}}{q'_{n+1}},1).$

We introduce a positive definite hyperhermitian form on $\mathbb{H}^{n+1}$:
\begin{align*}
\langle q,p\rangle:={q}_{1}\bar{p}_{1}+\cdots+{q}_{n+1}\bar{p}_{n+1}.
\end{align*}
It is obvious that $U\in{\rm Sp(n+1)}$ (cf. (\ref{eq:Sp(n)}))  if and only if $\langle qU,pU\rangle=\langle q,p\rangle$ for any $q,p\in \mathbb{H}^{n+1}.$ We have the  ball model
for {quaternionic hyperbolic space}: \begin{align*}
B^{4n+4}=\left\{q\in\mathbb{H}^{n+1};\langle q,q\rangle<1\right\}.
\end{align*}

Let $\gamma=(\gamma_{ij})\in {\rm Sp}(n+1,1)$. $\gamma$ is a $(n+2)\times(n+2)$ matrix acts on left quaternionic vector space $\mathbb{H}^{n+2}$ from right. For
$q\in\mathbb{H}^{n+1},$ $(q,1)$ is a vector in $\mathbb{H}^{n+2}.$ The right action of $\gamma$ on this vector is denoted by $(q,1)\gamma,$ whose $l$-th
component is
$$[(q,1)\gamma]_{l}:=\sum_{m=1}^{n+1}q_{m}\gamma_{ml}+\gamma_{(n+2)l},\quad l=1,\cdots,n+2.$$
Note that $$([(q,1)\gamma]_{1},\cdots,[(q,1)\gamma]_{n+2})
\sim([(q,1)\gamma]_{n+2}^{-1}[(q,1)\gamma]_{1}
,\cdots,[(q,1)\gamma]_{n+2}^{-1}[(q,1)\gamma]_{n+1},1),$$ where $[(q,1)\gamma]_{n+2}\neq 0$ by $$|[(q,1)\gamma]_{1}|^{2}           +\cdots+
|[(q,1)\gamma]_{n+1}|^{2}-|[(q,1)\gamma]_{n+2}|^{2}=0,$$ for $q\in S^{4n+3}.$
So  ${\rm Sp}(n+1,1)$ induces an action on  $S^{4n+3}$ by
\begin{equation}
\begin{aligned}\label{3.16}
\gamma(q):=\left([(q,1)\gamma]^{-1}_{n+2}[(q,1)\gamma]_{1},\cdots,
[(q,1)\gamma]^{-1}_{n+2}[(q,1)\gamma]_{n+1}\right)\quad {\rm for}\ q\in S^{4n+3}.
\end{aligned}
\end{equation}
${\rm Sp}(n+1,1)$ also induces an action on $B^{4n+4}$ in this way.

The \emph{fundamental invariant} on the unit ball is given by
\begin{align}\label{580}
(q,p)=\frac{1-\langle q,p\rangle}{(1-|q|^{2})^{\frac{1}{2}}(1-|p|^{2})^{\frac{1}{2}}},
\end{align}
and
\begin{align}\label{2.20}
|(q,p)|=cosh\left(\frac{1}{2}d(q,p)\right),
\end{align}
for $q,p\in {B}^{4n+4},$ where $d(q,p)$ is the \emph{quaternionic hyperbolic distance} between $q$ and $p$, which is invariant under the action of ${\rm
Sp}(n+1,1)$ (cf. p. 523 in \cite{Corlette}).

For a $\mathbb{H}$-valued function $\mathbf{{f}}=f_{1}+f_{2}\mathbf{i}+f_{3}\mathbf{j}+f_{4}\mathbf{k}$, we set
\begin{align*}
d\mathbf{{f}}:&=\sum_{l=1}^{4n+4}\frac{\partial{\mathbf{f}}}
{\partial{x_{l}}}dx_{l}=
df_{1}+df_{2}\mathbf{i}+df_{3}\mathbf{j}+df_{4}\mathbf{k}.
\end{align*}
It is easy to see that for two $\mathbb{H}$-valued function $\textbf{f}$ and $\textbf{g},$ we have
\begin{align*}
d(\mathbf{f}\cdot\mathbf{g})&=d\mathbf{{f}}\cdot\mathbf{{g}}+\mathbf{{f}}
\cdot d\mathbf{{g}}.
\end{align*}
In particular, we have  $dq =dx_{0} +dx_{ 1}\mathbf{i}+dx_{ 2}\mathbf{j}+dx_{ 3}\mathbf{k}$ if we write  $q =x_{0} +x_{ 1}\mathbf{i}+x_{ 2}\mathbf{j}
+x_{ 3}\mathbf{k}$.
For a $\mathbb{H}^{n}$-valued function $\mathbf{f}=(\mathbf{f}_{1},\cdots,\mathbf{f}_{n})$, we write $d\mathbf{f}:=(d\mathbf{f}_{1},\cdots,d\mathbf{f}_{n})$.

For a point $\zeta=(\zeta_{1},\cdots,\zeta_{n+1})\in S^{4n+3}=\left\{\xi\in\mathbb{H}^{n+1}:|\xi|=1\right\}$, we consider a quaternionic subspace of the tangent space:
$$H_{\zeta}:=\{v\in \mathbb{H}^{n+1}:\langle v,{\zeta}\rangle=0\}.$$
It is easy to see that $H_{\zeta}$ is a left quaternionic subspace of ${\mathbb{H}}$-dimension $n.$ Then $H=\cup_{\zeta\in S^{4n+3}} H_{\zeta}$ is  the
standard horizonal bundle of the tangent bundle of  the sphere. Let $\eta:[0,1]\rightarrow S^{4n+3}$ be any smooth curve such that $\eta(0)=\zeta.$ We
identify the vector $(\eta_{1}'(0),\cdots,\eta_{n+1}'(0))$ with a tangential vector $X_{0}=\sum_{j=1}^{4n+4}v_{j}\frac{\partial}{\partial x_{j}}$ at point
$\zeta$, if we write  $\eta_{l}'(0)=v_{4l-3}+v_{4l-2}\mathbf{i}+v_{4l-1}\mathbf{j}+v_{4l}\mathbf{k}$.
The standard ${\rm Im}\mathbb{H}$-valued contact form on ${S}^{4n+3}$ is given by
\begin{align}\label{2.3}
\Theta_{S}
=\sum_{l=1}^{n+1}\left(d{\zeta}_{l}\cdot\bar{\zeta}_{l}
-{\zeta}_{l}\cdot d\bar{\zeta}_{l}\right),
\end{align}
where $\zeta=(\zeta_{1},\cdots,\zeta_{n+1})\in S^{4n+3}.$ If we write  $\zeta_{l}=x_{4l-3}+x_{4l-2}\textbf{i}+x_{4l-1}\textbf{j}+x_{4l}\textbf{k},\
l=1,\ldots,n+1,$ and $\Theta_{S}=(\theta_{1},\theta_{2},\theta_{3}),$ then we have
\begin{align*}
\theta_{s}=-2\sum_{l=0}^{n}\sum_{k,j=1}^{4}b_{kj}^{s}x_{4l+k}dx_{4l+j},
\end{align*}
by using identity (\ref{584}) again, and so
\begin{align*}
d\theta_{s}=-2\sum_{l=0}^{n}\sum_{k,j=1}^{4}b_{kj}^{s}dx_{4l+k}\wedge dx_{4l+j}.
\end{align*}
The transformation $I_{s}$ on $H_{\zeta}$ is given by left multiplying $\mathbf{i}_{s}:$
\begin{align*}
\left(\eta'_{1}(0),\cdots,\eta'_{n+1}(0)\right)\longmapsto
\left(\textbf{i}_{s}\eta'_{1}(0),\cdots,\textbf{i}_{s}\eta'_{n+1}(0)\right).
\end{align*}
We can check that
\begin{equation}\begin{aligned}
\mathbf{i}_{s}\left(x_{1}+x_{2}\textbf{i}+x_{3}\textbf{j}+x_{4}\textbf{k}\right)
=\sum_{j,k=1}^{4}b^{s}_{jk}x_{k}\mathbf{i}_{j-1}
\end{aligned}\end{equation}
(here $\mathbf{i}_{0}=1$) (cf. (2.2) in \cite{wang5}). We define
\begin{align*}
I_{s}\partial_{x_{4l+k}}:=\sum_{j=1}^{4}b_{jk}^{s}
\partial_{x_{4l+j}},
\end{align*}
for $l=0,\cdots,n-1,\ j=1,2,3,4,\ s=1,2,3.$
Then for $v\in H_{\zeta},$ we have $\langle I_{s}v,\zeta\rangle=0,$ i.e. $I_{s}v\in H_{\zeta}.$
$\mathbb{Q}_{s}=\{aI_{1}+bI_{2}+cI_{3}|a^{2}+b^{2}+c^{2}=1\}$ is a ${\rm Sp}(n){\rm Sp}(1)$-structure on $H.$
Denote by $g_{S}$ the restriction to the horizontal subspace $H$ of the Euclidean metric on $S^{4n+3} $ multiplying a factor $4$. We can prove that
$g_{S}$ is compatible to $\Theta_{S}$ and $\mathbb{Q}_{S}$ on $S^{4n+3} $ as in the case of  the quaternionic Heisenberg group,  i.e. for any $X,Y\in H,$ $$g_{S}(I_{s}X,Y)=d\theta_{s}(X,Y),\quad s=1,2,3.$$

The group of  conformal qc transformations of $S^{4n+3}$ consists of  quaternionic fractional linear transformation ${\rm Sp}(n+1,1)/{\rm center}$
\cite{Ivanov4}.  We have the following qc Liouville type theorem.

\begin{thm}\label{liou}{{\rm (\textbf{Qc Liouville type theorem}) (cf. Theorem 8.5 in \cite{Ivanov4}) }}
Every conformal qc transformation between open subsets of $S^{4n+3}$ is the restriction
of a global conformal qc transformation.
\end{thm}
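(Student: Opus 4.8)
The plan is to reduce the statement to the rigidity of the flat qc parabolic geometry modeled on ${\rm Sp}(n+1,1)/P$, whose underlying homogeneous space is $S^{4n+3}$ with the standard qc structure described above. First I would normalize by the Cayley transform --- the qc analogue of stereographic projection, which realizes $S^{4n+3}$ minus a point as $\mathscr{H}^{n}$ and is itself a conformal qc equivalence. Composing a given conformal qc transformation $F\colon U\to V$ between open subsets of $S^{4n+3}$ with suitable Cayley transforms, we may assume $U,V$ are connected open subsets of $\mathscr{H}^{n}$ and $F^{*}g_{0}=\lambda g_{0}$, $F^{*}\mathbb{Q}_{0}=\mathbb{Q}_{0}$ for some positive $\lambda\in C^{\infty}(U)$. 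It then suffices to prove that such an $F$ is the restriction of a quaternionic fractional linear transformation; equivalently, of the induced action on $\mathscr{H}^{n}\subset S^{4n+3}$ of an element of ${\rm Sp}(n+1,1)$, i.e. a composition of left translations $\tau_{(y',t')}$, dilations $D_{\delta}$, rotations by ${\rm Sp}(n){\rm Sp}(1)$ and the inversion $R$.

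Next I would extract the infinitesimal rigidity of $F$. By the definition (\ref{574}) of a conformal qc mapping together with (\ref{322}), $F$ preserves the horizontal distribution, $F_{*}H_{0}=H_{0}$; hence $F^{*}\Theta_{0}=\Psi\Theta_{0}$ for an ${\rm SO}(3)$-valued function $\Psi$ by Lemma \ref{l2.1} applied to the pair $\Theta_{0}$, $F^{*}\Theta_{0}$. Differentiating this identity, restricting to $H_{0}$, and using (\ref{2,1}) shows that at each point $dF|_{H_{0}}$ is the product of $\sqrt{\lambda}$ with an element of ${\rm Sp}(n){\rm Sp}(1)$, so $F$ is an automorphism of the qc-conformal structure. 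Since the Biquard connection of $(\mathscr{H}^{n},g_{0},\mathbb{Q}_{0})$ is flat, both $U$ and $V$ carry the flat qc Cartan geometry $({\rm Sp}(n+1,1)\to {\rm Sp}(n+1,1)/P,\ \omega)$ with $\omega$ the Maurer--Cartan form, and one checks that $F$ lifts to an isomorphism of the associated Cartan bundles intertwining the flat Cartan connections.

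The third step is to develop this local isomorphism. By the classical developing argument for flat Cartan geometries, a local isomorphism of $({\rm Sp}(n+1,1)\to{\rm Sp}(n+1,1)/P,\omega)$ over the connected base $U$ is the restriction of left translation by a unique $g\in{\rm Sp}(n+1,1)$; pushing down to the base, $F$ is the restriction to $U$ of the action of $g$ on $S^{4n+3}$. Undoing the Cayley normalizations of the first step then produces the global conformal qc transformation of $S^{4n+3}$ extending the original $F$, which is what we want.

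I expect the main obstacle to be the rigidity input underlying the second and third steps: that an automorphism of the qc-conformal structure lifts canonically to the Cartan bundle, and that automorphisms of the flat model are exhausted by left translations. This relies on the qc-conformal structure being a regular, normal parabolic geometry of finite type --- the grading $\mathfrak{sp}(n+1,1)=\mathfrak{g}_{-2}\oplus\mathfrak{g}_{-1}\oplus\mathfrak{g}_{0}\oplus\mathfrak{g}_{1}\oplus\mathfrak{g}_{2}$ is $|2|$-graded with finite-dimensional prolongation --- so that a conformal qc map is determined by its $2$-jet at a single point of the connected domain and every admissible $2$-jet is realized inside ${\rm Sp}(n+1,1)$. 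Alternatively, this $2$-jet determinacy can be obtained directly, in the spirit of the CR Liouville theorem, by iterated covariant differentiation of $F^{*}g_{0}=\lambda g_{0}$ and $F^{*}\mathbb{Q}_{0}=\mathbb{Q}_{0}$ along the (flat) Biquard connection, which forces the Taylor expansion of $F$ to coincide term by term with that of a fractional linear transformation; verifying that the relevant harmonic curvature vanishes on $\mathscr{H}^{n}$ (equivalently, that the model is spherical) is precisely what makes this argument run.
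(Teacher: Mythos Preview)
The paper does not prove Theorem~\ref{liou}; it is quoted from the literature (Theorem~8.5 in \cite{Ivanov4}), and the paper uses it as a black box. So there is no ``paper's own proof'' to compare against.

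Your outline is the standard parabolic-geometry route and is essentially correct in spirit: qc structures are regular normal $|2|$-graded parabolic geometries of type $({\rm Sp}(n+1,1),P)$, the sphere is the flat model, and Liouville-type rigidity for the flat model follows from the developing map argument once one knows that conformal qc maps lift to Cartan-bundle automorphisms. This is precisely the framework of Alt \cite{Alt}, and the direct differential-geometric version is carried out in \cite{Ivanov4}. Two remarks on your write-up: (i) what you need in step two is flatness of the \emph{Cartan} connection (equivalently, vanishing of the conformal qc curvature $W^{qc}$ on $\mathscr{H}^{n}$), not merely flatness of the Biquard connection---the latter implies the former here, but they are conceptually different objects, and it is $W^{qc}=0$ that makes the model spherical; (ii) the assertion that $F$ lifts to an automorphism of the Cartan bundle intertwining the connections is exactly the non-trivial equivalence-of-categories statement for this parabolic geometry, so if you want a self-contained argument you should either cite it or run the $2$-jet determinacy argument you sketch at the end. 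With those two points made precise, your proposal would constitute a valid proof.
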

We can identify $\mathscr{H}^{n}$ with the boundary $\Sigma$ of the \emph{Siegel domain} in $\mathbb{{H}}^{n+1}$,
\begin{align}\label{Sigma}
\Sigma:=\{({q},q_{n+1})\in\mathbb{{H}}^{n}\times\mathbb{{H}}: {\rm{Re}}{q_{n+1}}=|{q}|^{2}\},
\end{align}
by using the projection
\begin{equation*}
\begin{aligned}
\hat{\pi}:\qquad\Sigma\quad&\longrightarrow\quad\mathscr{H}^{n}, \\
({q},q_{n+1})&\longmapsto({q},|{q}|^{2}-q_{n+1}).
\end{aligned}\end{equation*}

The \emph{Cayley transform} is the map from the sphere $S^{4n+3}$ minus the southern point to the quadratic hypersurface $\Sigma$ defined by
\begin{align*}
\hat{F}:S^{4n+3}\longrightarrow\Sigma, \quad(\zeta,\zeta_{n+1})\longmapsto(q,q_{n+1}),
\end{align*}
where
\begin{align}\label{237}
q=(1+\zeta_{n+1})^{-1}\zeta,\quad q_{n+1}=(1+\zeta_{n+1})^{-1}(1-\zeta_{n+1}).
\end{align}
Then we have the \emph{stereographic projection}:
\begin{align}\label{2.24}
F=\hat{\pi}\circ\hat{F}:S^{4n+3}\longrightarrow \mathscr{H}^{n}, \quad(\zeta,\zeta_{n+1})\longmapsto(q,t),
\end{align}
given by
\begin{align*}
q=(1+\zeta_{n+1})^{-1}\zeta,\quad t=\frac{2{\it \texttt{Im}}\zeta_{n+1}}{|1+\zeta_{n+1}|^{2}}.
\end{align*}
\begin{prop}{\rm (cf. p.146 in \cite{Ivanov})}\label{p2.1}
The Cayley transform is a conformal qc diffeomorphism
between  $\mathscr{H}^{n}$ with its standard qc contact structure ${\Theta}_{0}$  in (\ref{654}) and the sphere minus a point with its standard qc contact
structure $\Theta_{S}$ in (\ref{2.3}). More precisely,
\begin{align}\label{225}
{F}^{*}{\Theta}_{0}=
{\alpha}\frac{\Theta_{S}}{2|1+\zeta_{n+1}|^{2}}
\bar{\alpha} ,
\end{align}
where $\alpha=\frac{\overline{1+\zeta_{n+1}}}{|1+\zeta_{n+1}|}$ is a unit quaternion.
\end{prop}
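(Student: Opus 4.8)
The plan is to verify the explicit identity \eqref{225} by direct computation, and then read off all the claimed structural properties from it. First I would record that the Cayley transform $\hat F$ sends $S^{4n+3}\setminus\{\text{southern point}\}$ diffeomorphically onto $\Sigma$, with inverse the standard inverse Cayley map; composing with the diffeomorphism $\hat\pi$ gives that $F=\hat\pi\circ\hat F$ is a diffeomorphism onto $\mathscr H^n$. This is elementary since the formulas \eqref{237} and \eqref{2.24} for $q$, $q_{n+1}$, $t$ are explicitly invertible (the only obstruction is $1+\zeta_{n+1}\neq 0$, which is exactly the excluded southern point). So the real content is the transformation law \eqref{225} for the contact forms, from which $F^{*}g_{0}$ is proportional to $g_{S}$ and $F^{*}\mathbb{Q}_{0}=\mathbb{Q}_{S}$ follow by the compatibility conditions \eqref{2,1}, as I explain below.

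The main step is to compute $F^{*}\Theta_{0}$. Using $2\Theta_{0}=dt-y\cdot d\bar y + dy\cdot\bar y$ with $y=q=(1+\zeta_{n+1})^{-1}\zeta$ and $t=2\,\texttt{Im}\,\zeta_{n+1}/|1+\zeta_{n+1}|^{2}$, I would first differentiate these using the Leibniz rule $d(\mathbf f\cdot\mathbf g)=d\mathbf f\cdot\mathbf g+\mathbf f\cdot d\mathbf g$ recorded in the excerpt, being careful about the noncommutativity of $\mathbb H$ (so e.g. $d(u^{-1})=-u^{-1}(du)u^{-1}$). The cleanest route is to work on the Siegel hypersurface $\Sigma$ first: pull back $\Theta_0$ to $\Sigma$ via $\hat\pi$, obtaining $2\hat\pi^{*}\Theta_0 = \texttt{Im}(dq_{n+1}) - q\cdot d\bar q + dq\cdot\bar q$ up to sign conventions (using $\texttt{Re}\,q_{n+1}=|q|^2$ on $\Sigma$ to eliminate the real part). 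Then pull back along $\hat F$, substituting $q=(1+\zeta_{n+1})^{-1}\zeta$, $q_{n+1}=(1+\zeta_{n+1})^{-1}(1-\zeta_{n+1})$. After collecting terms — here the algebra is genuinely the crux — the factors of $(1+\zeta_{n+1})^{-1}$ and $\overline{(1+\zeta_{n+1})}^{-1}$ should combine to produce exactly $\alpha\,\Theta_S/(2|1+\zeta_{n+1}|^{2})\,\bar\alpha$ with $\alpha=\overline{1+\zeta_{n+1}}/|1+\zeta_{n+1}|$, once one uses the defining relation $|\zeta|^2+|\zeta_{n+1}|^2=1$ on $S^{4n+3}$ (which forces the extra terms involving $d|\zeta|^2$ and $\texttt{Re}\,d\zeta_{n+1}$ to cancel). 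I expect this quaternionic bookkeeping — keeping the left/right multiplications straight and invoking the sphere constraint at the right moment — to be the principal obstacle; everything else is formal.

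Finally, from \eqref{225} the rest is immediate. Writing $\Psi$ for the ${\rm SO}(3)$-valued function implementing the rotation $v\mapsto \alpha\, v\,\bar\alpha$ on ${\rm Im}\,\mathbb H$ (an element of ${\rm SO}(3)$ since $|\alpha|=1$), equation \eqref{225} says $F^{*}\Theta_0 = \lambda\,\Psi\,\Theta_S$ with $\lambda = |1+\zeta_{n+1}|^{-2}>0$. Since $F$ is a diffeomorphism, $F_{*}$ carries $\mathrm{Ker}\,\Theta_S = H_S$ onto $\mathrm{Ker}\,\Theta_0 = H_0$, so $F$ is horizontal. Differentiating $F^{*}\Theta_0=\lambda\Psi\Theta_S$ and restricting to the horizontal bundle kills the $d\lambda$ and $d\Psi$ terms, giving $F^{*}d\theta_{0;s} = \lambda\sum_t\Psi_{st}\,d\theta_{S;t}$ on $H$. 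By the compatibility identity \eqref{2,1} for both qc structures this becomes $g_0(F^{*}I_s X, F^{*}Y) = \lambda\sum_t\Psi_{st}\,g_S(I_t X,Y)$ for $X,Y\in H_S$; comparing with $g_0(I_s\cdot,\cdot)$ forces $F^{*}I_s=\sum_t\Psi_{st}I_t$ and $F^{*}g_0=\lambda\,g_S$ (up to the overall conformal factor, exactly as in the proof of Lemma~2.1). Hence $F^{*}\mathbb{Q}_0=\mathbb{Q}_S$ and $F^{*}g_0$ is a positive multiple of $g_S$, i.e. $F$ is a conformal qc diffeomorphism, and the proof is complete.
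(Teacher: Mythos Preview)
The paper does not supply its own proof of this proposition; it is simply cited from Ivanov--Vassilev (p.~146 of \cite{Ivanov}). So your direct computation of $F^{*}\Theta_{0}$ via the Leibniz rule and the sphere constraint $|\zeta|^{2}+|\zeta_{n+1}|^{2}=1$ is the natural route to the identity \eqref{225}, and that part of your outline is fine.

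Where your proposal is too quick is the final paragraph. From $F^{*}\Theta_{0}=\lambda\Psi\Theta_{S}$ you correctly obtain, on the horizontal bundle, $F^{*}d\theta_{0;s}=\lambda\sum_{t}\Psi_{st}\,d\theta_{S;t}$, i.e.\ $(F^{*}g_{0})(F^{*}I_{s}X,Y)=\lambda\, g_{S}(\hat I_{s}X,Y)$. But your claim that this ``forces $F^{*}g_{0}=\lambda g_{S}$ \dots\ exactly as in the proof of Lemma~\ref{l2.1}'' is not justified: Lemma~\ref{l2.1} presupposes a single fixed metric $g$ and deduces the ${\rm SO}(3)$ relation between two compatible $\Theta$'s; here you have two a priori different metrics ($F^{*}g_{0}$ and $g_{S}$) and two triples of almost complex structures, with only the three 2-forms matched. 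One still needs an argument that the three 2-forms $\omega_{s}$ determine the pair $(g,I_{s})$ uniquely---for instance via $I_{2}=(\omega_{1}^{\flat})^{-1}\omega_{3}^{\flat}$, $I_{3}=(\omega_{2}^{\flat})^{-1}\omega_{1}^{\flat}$, then $I_{1}=-I_{2}I_{3}$ and $g=-\omega_{1}(I_{1}\cdot,\cdot)$.

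The paper in fact treats exactly this point separately, as Corollary~\ref{c2.1}, and proves $F^{*}g_{0}=\lambda g_{S}$ by a different device: it uses the fundamental 4-form $\Omega=\sum_{s}\omega_{s}\wedge\omega_{s}$ and the fact (Salamon) that its stabiliser in ${\rm GL}(4n,\mathbb{R})$ is ${\rm Sp}(n){\rm Sp}(1)\subset {\rm O}(4n)$, to conclude that $F^{*}$ carries orthogonal maps to orthogonal maps and hence $F^{*}g_{0}$ is conformal to $g_{S}$; the factor $\lambda$ is then pinned down by comparing volume forms. Either argument closes the gap, but your appeal to Lemma~\ref{l2.1} does not.
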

 We have the following corollary.
\begin{cor}\label{c2.1}
Suppose that $g_{S}$ is the standard qc metric on $S^{4n+3}$ and $g_{0}$ is the standard qc metric on $\mathscr{H}^{n}.$ We have
\begin{align}\label{227}
{F}^{*}{g}_{0}=\frac{g_{S}}{2|1+\zeta_{n+1}|^{2}},
\end{align}
where $F$ is the stereographic projection defined by (\ref{2.24}).
\end{cor}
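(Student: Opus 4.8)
The plan is to read the metric relation off the contact $1$-form relation of Proposition~\ref{p2.1}, using the standard fact that on a qc manifold the Carnot--Carath\'eodory metric on $H$ is completely determined by any compatible $\mathbb{R}^3$-valued contact $1$-form $\Theta$ (through $d\theta_s|_H=g(I_s\,\cdot\,,\cdot)$ together with the quaternion and hermitian conditions).

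First I would record an elementary rescaling rule. Let $(M,g,\mathbb{Q})$ be a qc manifold with compatible $1$-form $\Theta$, so ${\rm Ker}\,\Theta=H$ and $g(I_sX,Y)=d\theta_s(X,Y)$ for $X,Y\in H$. If $\Theta'=\phi\,\Psi\Theta$ for a smooth positive function $\phi$ and an ${\rm SO}(3)$-valued function $\Psi=(\Psi_{st})$, then ${\rm Ker}\,\Theta'=H$ and $\Theta'$ is compatible with the qc structure $(\phi g,\mathbb{Q})$. Indeed, writing $\theta'_s=\phi\sum_t\Psi_{st}\theta_t$ and differentiating gives $d\theta'_s=d\phi\wedge(\sum_t\Psi_{st}\theta_t)+\phi\sum_t(d\Psi_{st}\wedge\theta_t+\Psi_{st}\,d\theta_t)$; evaluating on $X,Y\in H$ annihilates every summand containing a $\theta_t$, so $d\theta'_s|_H=\phi\sum_t\Psi_{st}\,d\theta_t|_H=(\phi g)(\sum_t\Psi_{st}I_t\,\cdot\,,\cdot)$. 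Since $(\Psi_{st})\in{\rm SO}(3)$, the endomorphisms $I'_s:=\sum_t\Psi_{st}I_t$ again satisfy the quaternion relations \eqref{2.00}, are hermitian for $\phi g$, and span the same bundle $\mathbb{Q}$; hence $(\phi g,\mathbb{Q})$ is the qc structure compatible with $\Theta'$, and by the uniqueness above it is the only one.

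Now I would apply this with $\Theta=\Theta_S$, $g=g_S$, $\mathbb{Q}=\mathbb{Q}_S$ on $S^{4n+3}$. By Proposition~\ref{p2.1},
\begin{align*}
F^*\Theta_0=\alpha\,\frac{\Theta_S}{2|1+\zeta_{n+1}|^2}\,\bar\alpha,\qquad \alpha=\frac{\overline{1+\zeta_{n+1}}}{|1+\zeta_{n+1}|}.
\end{align*}
As $\alpha$ is a unit quaternion, $v\mapsto\alpha v\bar\alpha$ on ${\rm Im}\,\mathbb{H}\cong\mathbb{R}^3$ is the image of $\alpha$ under the double cover ${\rm Sp}(1)\to{\rm SO}(3)$, so $F^*\Theta_0=\phi\,\Psi\,\Theta_S$ with $\phi=\tfrac{1}{2|1+\zeta_{n+1}|^2}$ and $\Psi$ an ${\rm SO}(3)$-valued function on $S^{4n+3}$. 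On the other hand $F$ is a qc diffeomorphism onto $\mathscr{H}^n$ (Proposition~\ref{p2.1}), so $(F^*g_0,F^*\mathbb{Q}_0)$ is precisely the qc structure on $S^{4n+3}$ minus a point compatible with $F^*\Theta_0$. Comparing with the rescaling rule and invoking uniqueness of the compatible metric yields $F^*g_0=\phi g_S=\dfrac{g_S}{2|1+\zeta_{n+1}|^2}$, which is \eqref{227}.

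The one point requiring care — the main obstacle — is the uniqueness input used above: that a compatible $\mathbb{R}^3$-valued $1$-form on a qc manifold determines the metric on $H$. This is standard, but for the present corollary it can also be checked directly. Since $F$ is a conformal qc map, $F^*g_0=\psi g_S$ for some positive $\psi$, and $F^*\mathbb{Q}_0=\mathbb{Q}_S$ with the pullback preserving \eqref{2.00}, so $F^*I^0_s=\sum_t c_{st}I^S_t$ for an ${\rm SO}(3)$-valued $(c_{st})$. Then for $X,Y\in H$,
\begin{align*}
\psi\sum_t c_{st}\,d\theta_{S;t}(X,Y)=(F^*g_0)((F^*I^0_s)X,Y)=d(F^*\theta_{0;s})(X,Y)=\phi\sum_t\Psi_{st}\,d\theta_{S;t}(X,Y),
\end{align*}
and since $d\theta_{S;1}|_H,d\theta_{S;2}|_H,d\theta_{S;3}|_H$ are linearly independent (they equal $g_S(I^S_t\,\cdot\,,\cdot)$, $t=1,2,3$, and $g_S$ is nondegenerate), we get $\psi c_{st}=\phi\Psi_{st}$ for all $s,t$; taking the Euclidean norm of a row of these orthogonal matrices gives $\psi^2=\phi^2$, hence $\psi=\phi$.
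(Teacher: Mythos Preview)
Your proof is correct and takes a genuinely different route from the paper's. The paper argues via the fundamental $4$-form $\Omega=\sum_s\omega_s\wedge\omega_s$: using Salamon's lemma that the stabilizer of $\Omega$ in ${\rm GL}(4n,\mathbb{R})$ is exactly ${\rm Sp}(n){\rm Sp}(1)$, it shows that $F^*\Omega=\lambda^2\Omega'$ forces $F^*g_0$ to be conformal to $g_S$, and then pins down the factor by comparing the top-degree volume forms $(d\theta_s)^{2n}$ on the two sides. Your argument bypasses both the $4$-form machinery and the volume computation: you observe that the rescaled form $\phi\,\Psi\,\Theta_S$ is compatible with $(\phi g_S,\mathbb{Q}_S)$, that $(F^*g_0,F^*\mathbb{Q}_0)$ is compatible with the same form, and then read off $\psi=\phi$ from the linear independence of the three $2$-forms $d\theta_{S;t}|_H$. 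This is more elementary and isolates exactly the algebraic content needed---namely that the three $2$-forms $g(I_t\,\cdot\,,\cdot)$ together determine $g$ up to the ${\rm SO}(3)$ ambiguity. The paper's approach, by contrast, exhibits a technique (the $4$-form and its stabilizer) that recurs in quaternionic geometry and is of independent interest, and it does not need to invoke the ``conformal qc'' clause of Proposition~\ref{p2.1} to obtain the preliminary fact $F^*g_0=\psi g_S$; it rederives that from \eqref{225} alone.
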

\begin{proof}
Let $\mathbb{I}=(I_{1},I_{2},I_{3})$ be the standard qc structure on the group $\mathscr{H}^{n}$ and $\mathbb{I}'=(I_{1}',I_{2}',I_{3}')$ be the standard
qc structure on the sphere. Let $\omega_{s}:=g_{0}(I_{s}\cdot,\cdot)=d\theta_{s},$ if we write $\Theta_{0}=(\theta_{1},\theta_{2},\theta_{3}).$ Similarly,
let $\omega'_{s}:=g_{S}(I'_{s}\cdot,\cdot)=d\theta'_{s},$ if we write $\Theta_{S}=(\theta'_{1},\theta'_{2},\theta'_{3}).$ Consider the fundamental $4$-form
\begin{align*}
\Omega=\omega_{1}\wedge\omega_{1}+\omega_{2}\wedge\omega_{2}+
\omega_{3}\wedge\omega_{3},
\end{align*}
on the horizontal subspace $H_{0;p}$ for a fixe point $p\in \mathscr{H}^{n}.$
It is known that an element  $g\in {\rm GL}(4n,\mathbb{R})$ preserving $\Omega$ if and only if $g\in {\rm Sp}(n){\rm Sp}(1)$ (cf. Lemma 9.1 in
\cite{Salamon}), where ${\rm Sp}(n)=\{A\in {\rm O}_{g_0}(4n);AI_{s}=I_{s}A,s=1,2,3\}$ and ${\rm
Sp}(1)=\{a_{1}I_{1}+a_{2}I_{2}+a_{3}I_{3};a_{1}^{2}+a_{2}^{2}+a_{3}^{2}
=1\}.$ Here $A\in {\rm O}_{g_0}(4n)$ means that $A$ is orthogonal with respect to $g_{0}$ on $H_{0;p}.$ Similar result holds for $\Omega'.$

Write $\alpha\Theta_{S}\bar{\alpha}=(\cdots,\sum_{k=1}^{3}a_{jk}\theta'_{k},\cdots).$ It follows from (\ref{225}) and direct calculation that $F^{*}\omega_{j}=\lambda\sum_{k=1}^{3}a_{jk}\omega_{k}'$ as $2$-forms on the horizontal subspace, where $\lambda=\frac{1}{2|1+\xi_{n+1}|^{2}} $, and so
\begin{align*}
F^{*}\Omega=\lambda^{2}\Omega'.
\end{align*}
Then   $A$ preserving
$\Omega $ implies $F^{*}A$ preserving $\Omega'$. This is because
\begin{equation*}
\begin{aligned}
\Omega'\left(F^{*}A(X_{1}),\cdots,F^{*}A(X_{4})\right)&=\lambda^{-2}F^{*}\Omega
\left(F^{-1}_{*}[A(F_{*}X_{1})],\cdots,F^{-1}_{*}[A(F_{*}X_{4})]\right)\\&=
\lambda^{-2}\Omega
\left(A(F_{*}X_{1}),\cdots,A(F_{*}X_{4})\right)\\&=\lambda^{-2}\Omega
\left(F_{*}X_{1},\cdots,F_{*}X_{4}\right)=\Omega'(X_{1},\cdots,X_{4}),
\end{aligned}\end{equation*}
where $X_{1},\cdots,X_{4}\in H_{0;p}.$ Thus $F^{*}{A}$ is orthogonal with respect to $g_S$ if $A$ is orthogonal with respect to $g_0$, and vise versa. But for any $X,Y\in H_{S,p}$ with
$\|F_{*}X\|_{g_{0}}=\|F_{*}Y\|_{g_{0}}$,  there exists an $A\in {\rm O}_{g_{0}}(H),$  such that $F_{*}Y=AF_{*}X.$ Then we have
\begin{equation*}
\begin{aligned}
\|Y\|_{g_S}=\|F^{-1}_{*}(F_{*}Y)\|_{g_S}=\|F^{-1}_{*}[A(F_{*}X)]\|_{g_S}
=\|F^{*}A(X)\|_{g_S}=\|X\|_{g_S}.
\end{aligned}
\end{equation*}
This implies that $F^{*}g_{0}$ is conformal to $g_{S},$ i.e. we can write $F^{*}{g_{0}}=\mu g_{S},$ for some $\mu>0.$
Consequently, we have $\left.F^{*}{\rm Vol}\right|_{H_{0}}=\mu^{2n}\left. {\rm Vol}\right|_{H_{S}}.$ 

On the other hand, $F^{*}d\theta_{s}|_{H_S}=\lambda\alpha
d\theta'_{s}\bar{\alpha}|_{H_S}=\lambda g_{S}(\hat{I}_{s}\cdot,\cdot)|_{H_S}=\lambda\hat{\omega}_{s}|_{H_S}$ for $s=1,2,3,$ where $\hat{\mathbb{I}} =\alpha \mathbb{I}' \bar{\alpha},$ and
$\hat{I}_{s}$ is also an almost complex structure compatible with $g_{S}.$
It follows that $\left(F^{*}d\theta_{s}\right)^{2n}=\lambda^{2n}\hat{\omega}^{2n}_s $ on ${H_S}$,  and so $\left.F^{*}{\rm Vol}\right|_{H_{0}}=\lambda^{2n}\left. {\rm
Vol}\right|_{H_S} $. So we must have $\mu=\lambda.$
The corollary is proved.
\end{proof}
Then by the transformation formula (\ref{227}), we get the following corollary.
\begin{cor}\label{333} For any $\xi=(q,t)\in\mathscr{H}^{n}$, we have
\begin{align}\label{22}
\left.R^{*}g_{0}\right|_{\xi}=\left.\frac{1}{\|\xi\|^{4}}
g_{0}\right|_{\xi},\quad D_{r}^{*}g_{0}=r^{2}g_{0}.
\end{align}
\end{cor}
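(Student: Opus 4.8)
The statement splits into two independent claims: the dilation identity, which is immediate from the left invariant frame, and the inversion identity, which I would obtain by transporting $R$ to the sphere through the stereographic projection and applying Corollary \ref{c2.1}. For the dilation, observe that $D_{r}$ is the linear map $y\mapsto ry$, $t\mapsto r^{2}t$, so $(D_{r})_{*}\partial_{y_{i}}=r\,\partial_{y_{i}}$ and $(D_{r})_{*}\partial_{t_{s}}=r^{2}\partial_{t_{s}}$; substituting into (\ref{2.43}) gives $(D_{r})_{*}\big(Y_{4l+j}|_{\xi}\big)=r\,Y_{4l+j}|_{D_{r}\xi}$, so $D_{r}$ preserves $H_{0}$ and scales the horizontal left invariant vector fields by $r$. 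Since $g_{0}(Y_{\alpha},Y_{\beta})=2\delta_{\alpha\beta}$, this yields $(D_{r}^{*}g_{0})(Y_{\alpha},Y_{\beta})=g_{0}(rY_{\alpha},rY_{\beta})=r^{2}g_{0}(Y_{\alpha},Y_{\beta})$, i.e. $D_{r}^{*}g_{0}=r^{2}g_{0}$.

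For the inversion, the plan is to show that, under the stereographic projection $F$ of (\ref{2.24}), the inversion $R$ of (\ref{44}) is conjugate to the antipodal map $\sigma(\zeta,\zeta_{n+1})=(-\zeta,-\zeta_{n+1})$ of $S^{4n+3}\subset\mathbb{H}^{n+1}$, namely
\begin{align*}
R\circ F=F\circ\sigma\qquad\text{on } S^{4n+3}\setminus\{(\mathbf{0},-1)\}.
\end{align*}
This is a direct computation: substituting $F(\zeta,\zeta_{n+1})=\big((1+\zeta_{n+1})^{-1}\zeta,\ 2\,{\rm Im}\,\zeta_{n+1}/|1+\zeta_{n+1}|^{2}\big)$ into (\ref{44}) and simplifying, where the crucial identities are $|\zeta|^{2}-2\,{\rm Im}\,\zeta_{n+1}=\overline{(1+\zeta_{n+1})}(1-\zeta_{n+1})$ and, using $|\zeta|^{2}+|\zeta_{n+1}|^{2}=1$,
\begin{align*}
\|F(\zeta,\zeta_{n+1})\|^{4}=\frac{|\zeta|^{4}+4|{\rm Im}\,\zeta_{n+1}|^{2}}{|1+\zeta_{n+1}|^{4}}=\frac{|1-\zeta_{n+1}|^{2}}{|1+\zeta_{n+1}|^{2}}.
\end{align*}
Next I would observe that $\sigma$ is the restriction of $-{\rm id}\in{\rm O}(4n+4)$, so it preserves the Euclidean metric and the horizontal bundle $H$ (since $H_{-\zeta}=H_{\zeta}$), and therefore $\sigma^{*}g_{S}=g_{S}$.

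Combining these with Corollary \ref{c2.1}: writing $R=F\circ\sigma\circ F^{-1}$, a diffeomorphism of $\mathscr{H}^{n}\setminus\{0\}$, and using $\sigma^{*}g_{S}=g_{S}$ together with $|1+\zeta_{n+1}|^{2}\circ\sigma=|1-\zeta_{n+1}|^{2}$,
\begin{align*}
R^{*}g_{0}=(F^{-1})^{*}\sigma^{*}F^{*}g_{0}=(F^{-1})^{*}\sigma^{*}\!\left(\frac{g_{S}}{2|1+\zeta_{n+1}|^{2}}\right)=(F^{-1})^{*}\!\left(\frac{g_{S}}{2|1-\zeta_{n+1}|^{2}}\right).
\end{align*}
Rewriting Corollary \ref{c2.1} as $(F^{-1})^{*}g_{S}=2|1+\zeta_{n+1}|^{2}g_{0}$, with $\zeta=F^{-1}(\xi)$, and substituting gives $R^{*}g_{0}=\dfrac{|1+\zeta_{n+1}|^{2}}{|1-\zeta_{n+1}|^{2}}\,g_{0}$ at $\xi=F(\zeta,\zeta_{n+1})$, and by the displayed formula for $\|F(\zeta,\zeta_{n+1})\|^{4}$ this is exactly $\|\xi\|^{-4}g_{0}$. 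This proves Corollary \ref{333}.

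The step I expect to be the main obstacle is verifying $R\circ F=F\circ\sigma$: although elementary, the non-commutativity of $\mathbb{H}$ has to be handled carefully throughout the simplification (e.g. $(ab)^{-1}=b^{-1}a^{-1}$, and left quaternionic scalar factors in $\mathbb{H}^{n}$ cannot be commuted past the components). Once Corollary \ref{c2.1} is available, the remaining steps are purely formal.
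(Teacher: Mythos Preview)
Your proof is correct and follows essentially the same route as the paper: both establish that $R=F\circ\sigma\circ F^{-1}$ with $\sigma$ the antipodal map on $S^{4n+3}$, then pull back using Corollary~\ref{c2.1} and the identity $\|\xi\|^{4}=|1-\zeta_{n+1}|^{2}/|1+\zeta_{n+1}|^{2}$. The only difference is cosmetic: the paper verifies the conjugation identity on the Siegel-domain boundary $\Sigma$ via $\hat F$ and $\hat R=\hat\pi^{-1}\circ R\circ\hat\pi$, whereas you compute $R\circ F=F\circ\sigma$ directly on $\mathscr{H}^{n}$; and for the dilation claim the paper simply asserts the result, while you supply the (correct) one-line argument using $(D_{r})_{*}Y_{j}=rY_{j}$.
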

\begin{proof}
Note that the Cayley transformation $\hat{F}$ maps $(-\zeta,-\zeta_{n+1})$ to
\begin{align}\label{242}
\left(-(1-\zeta_{n+1})^{-1}\zeta,
(1-\zeta_{n+1})^{-1}(1+\zeta_{n+1})\right)
\end{align}
for $(\zeta,\zeta_{n+1})\in S^{4n+3}.$ The reflection $R$ in (\ref{44}) on the Heisenberg group induces a reflection $\hat{R}:=\hat{\pi}^{-1}\circ
R\circ\hat{\pi}$ on the quadratic surface  $\Sigma$ in (\ref{Sigma}). It is direct to see that
\begin{align*}
\hat{F}(-\zeta,-\zeta_{n+1})=\left(-q_{n+1}^{-1}q,q_{n+1}^{-1}\right)
=\hat{R}(q,q_{n+1}),
\end{align*}
for $(q,q_{n+1})=\hat{F}(\zeta,\zeta_{n+1})$ by (\ref{237}). Then $\hat{F}\circ \varphi\circ \hat{F}^{-1}=\hat{R}$, where $\varphi$ is the qc isometric
automorphism of $S^{4n+3}$ given by $(\zeta,\zeta_{n+1})\rightarrow (-\zeta,-\zeta_{n+1}).$ Consequently,  $F\circ \varphi\circ F^{-1}=R.$
Then, by Corollary \ref{c2.1}, we have
$$\left.R^{*}g_{0}\right|_{\xi}=\left.\left(F^{-1}\right)^{*}\circ \varphi^{*}\circ
F^{*}g_{0}\right|_{\xi}=\left.\frac{|1+\zeta_{n+1}|^{2}}{|1-\zeta_{n+1}|^{2}}
g_{0}\right|_{\xi}
=\left.\frac{1}{|q_{n+1}|^{2}}g_{0}|_{\xi}=\frac{1}{\|\xi\|^{4}}
g_{0}\right|_{\xi},$$
by (\ref{237}), where $\xi=(\zeta,\zeta_{n+1})$.
The result follows.
\end{proof}
\subsection{The conformal action of ${\rm Sp}(n+1,1)$ on the sphere} Let us show the elements of ${\rm Sp}(n+1,1)$ acting  conformally on the sphere with the standard qc structure. We need to know the conformal factor explicitly.
\begin{prop}\label{p2.2}
Suppose that  $\gamma \in {\rm Sp}(n+1,1)$ and $\Theta_{S}$ is the standard contact form on $S^{4n+3}$. Then we have
\begin{align}\label{111}
\left.\gamma^{*}\Theta_{S}\right|_{\zeta}={\lambda}\frac{\Theta_{{S}}}{|
[(\zeta,1)\gamma]_{n+2}|^{2}}\bar{\lambda}
\end{align}
at point $\zeta\in S^{4n+3}$, where \begin{align}\label{5.11}\lambda=\frac{\overline{{[(\zeta,1)\gamma]_{n+2}}}}
{\left|[(\zeta,1)\gamma]_{n+2}\right|}
\end{align} is a unit quaternion.
\end{prop}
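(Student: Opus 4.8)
The plan is to lift the whole computation from $S^{4n+3}$ to $\mathbb{H}^{n+2}$. Introduce the affine slice $s\colon S^{4n+3}\to\mathbb{H}^{n+2}$, $s(\zeta)=(\zeta,1)$; its image lies in the null cone $\{w\in\mathbb{H}^{n+2}:Q(w,w)=0\}$ of the form $Q$ of (\ref{2.31}), since $Q((\zeta,1),(\zeta,1))=1-|\zeta|^{2}=0$ on $S^{4n+3}$. With this notation the induced action (\ref{3.16}) reads $s(\gamma(\zeta))=\beta(\zeta)^{-1}\cdot\bigl((\zeta,1)\gamma\bigr)$, where $\beta(\zeta):=[(\zeta,1)\gamma]_{n+2}$ (which never vanishes on $S^{4n+3}$) and the scalar $\beta(\zeta)^{-1}$ left-multiplies each component. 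The idea is that the right matrix action $m_{\gamma}(w)=w\gamma$ on $\mathbb{H}^{n+2}$ preserves a suitable tautological form, so that all of the conformal distortion is produced by the subsequent left rescaling by $\beta^{-1}$.

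Concretely, put on $\mathbb{H}^{n+2}$ the ${\rm Im}\,\mathbb{H}$-valued $1$-form
\[
\widetilde{\Theta}:=\sum_{l=1}^{n+2}\epsilon_{l}\bigl(dw_{l}\cdot\bar w_{l}-w_{l}\cdot d\bar w_{l}\bigr),\qquad \epsilon_{1}=\cdots=\epsilon_{n+1}=-1,\quad\epsilon_{n+2}=1,
\]
which is tailored to the signature of $Q$. Two facts are straightforward. First, $s^{*}\widetilde{\Theta}=-\Theta_{S}$: on the slice $dw_{n+2}=0$, so the $(n+2)$-term drops out and the remaining sum is minus the form (\ref{2.3}). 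Second, $m_{\gamma}^{*}\widetilde{\Theta}=\widetilde{\Theta}$: since $\gamma$ is a constant matrix, $d\bigl((w\gamma)_{l}\bigr)=\sum_{m}dw_{m}\gamma_{ml}$, so $\widetilde{\Theta}$ is built from the $\epsilon$-sesquilinear pairing $\langle u,v\rangle:=\sum_{l}\epsilon_{l}u_{l}\bar v_{l}$ by substituting $(u,v)=(dw,w)$ and $(u,v)=(w,dw)$ and subtracting, and the defining property of ${\rm Sp}(n+1,1)$ is precisely $\langle u\gamma,v\gamma\rangle=\langle u,v\rangle$ for all $u,v\in\mathbb{H}^{n+2}$ (this is (\ref{2.31}) with $q,p$ replaced by $q\gamma,p\gamma$).

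The remaining ingredient is the behaviour of $\widetilde{\Theta}$ under a pointwise left rescaling $w\mapsto\mu\,w$ by an $\mathbb{H}$-valued function $\mu$. Expanding with the product rule $d(\mathbf{f}\cdot\mathbf{g})=d\mathbf{f}\cdot\mathbf{g}+\mathbf{f}\cdot d\mathbf{g}$ recalled above, one finds
\begin{align*}
\sum_{l}\epsilon_{l}\bigl(d(\mu w_{l})\,\overline{\mu w_{l}}-\mu w_{l}\,d\overline{\mu w_{l}}\bigr)
&=\mu\Bigl(\sum_{l}\epsilon_{l}\bigl(dw_{l}\bar w_{l}-w_{l}d\bar w_{l}\bigr)\Bigr)\bar\mu\\
&\quad +Q(w,w)\bigl(d\mu\cdot\bar\mu-\mu\cdot d\bar\mu\bigr),
\end{align*}
where $Q(w,w)=\sum_{l}\epsilon_{l}w_{l}\bar w_{l}$ is real and hence commutes past $d\mu$. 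Apply this with $w=(\zeta,1)\gamma$ and $\mu=\beta^{-1}$: the inhomogeneous term vanishes because $Q\bigl((\zeta,1)\gamma,(\zeta,1)\gamma\bigr)=Q\bigl((\zeta,1),(\zeta,1)\bigr)=1-|\zeta|^{2}=0$ on $S^{4n+3}$, while the first term on the right, by $m_{\gamma}^{*}\widetilde{\Theta}=\widetilde{\Theta}$ and then $s^{*}\widetilde{\Theta}=-\Theta_{S}$, equals $-\beta^{-1}\,\Theta_{S}\,\overline{\beta^{-1}}$. The left-hand side is $(s\circ\gamma)^{*}\widetilde{\Theta}$, so
\[
\gamma^{*}\Theta_{S}=-(s\circ\gamma)^{*}\widetilde{\Theta}=\beta^{-1}\,\Theta_{S}\,\overline{\beta^{-1}}.
\]
Writing $\beta^{-1}=\bar\beta/|\beta|^{2}$ and $\overline{\beta^{-1}}=\beta/|\beta|^{2}$ turns this into $\lambda\,\Theta_{S}\,\bar\lambda/|\beta|^{2}$ with $\lambda=\bar\beta/|\beta|$ a unit quaternion, which is exactly (\ref{111})--(\ref{5.11}) with $\beta=[(\zeta,1)\gamma]_{n+2}$.

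The one real obstacle is the noncommutative bookkeeping: one must keep each quaternionic factor on the correct side and in the correct order throughout --- remembering that $\mathbb{H}^{n+2}$ is a left module while $\gamma$ acts from the right, and that conjugation reverses products --- so that in the rescaling identity above the inhomogeneous contribution genuinely collapses to the single term proportional to $Q(w,w)$ and therefore dies on the null cone. The overall sign in $s^{*}\widetilde{\Theta}=-\Theta_{S}$, and correspondingly in the definition of $\widetilde{\Theta}$, should be matched carefully against the orientation conventions of (\ref{2.3}); it cancels and does not affect the final formula.
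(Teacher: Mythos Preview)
Your proof is correct and in fact cleaner than the paper's. The paper proceeds by brute force: it writes out $d\gamma(\zeta)_{l}$ explicitly using the product rule and (\ref{2.40}), assembles the sum $\sum_{l=1}^{n+1}d\gamma(\zeta)_{l}\,\overline{\gamma(\zeta)_{l}}$, and then simplifies by differentiating the algebraic identity (\ref{2.36}) with respect to $\zeta$; subtracting the conjugate yields the result. Your argument replaces this direct computation with a structural observation: the tautological ${\rm Im}\,\mathbb{H}$-valued one-form $\widetilde{\Theta}=Q(dw,w)-Q(w,dw)$ on $\mathbb{H}^{n+2}$ is manifestly ${\rm Sp}(n+1,1)$-invariant, and its behaviour under left rescaling $w\mapsto\mu w$ has an inhomogeneous piece proportional to $Q(w,w)$, which dies on the null cone. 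The rescaling identity you state is exactly right (the key point being that $w_{l}\bar{w}_{l}=|w_{l}|^{2}$ is real and hence commutes past $d\mu$), and pulling back along $s\circ\gamma=L_{\beta^{-1}}\circ m_{\gamma}\circ s$ gives the formula immediately. What you gain is transparency --- the invariance of $Q$ is invoked once, as the invariance of $\widetilde{\Theta}$, rather than twice in differentiated form --- and the same framework would handle the analogous computation on any affine slice of a null quadric. What the paper's approach buys is that it requires no auxiliary object and works entirely within $S^{4n+3}$; it also sets up the coordinate expressions (\ref{251})--(\ref{2277}) that are reused in the proof of Proposition~\ref{1234}.
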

\begin{proof}
Note that for any $x\in \mathbb{H}$,
\begin{align}\label{2.40}
d(x^{-1})=-\bar{x}\frac{dx}{|x|^{4}}\bar{x}.
\end{align}
By the action of ${\rm Sp}(n+1,1)$ on the ball $\bar{B}^{4n+4}$ in (\ref{3.16}), we have
\begin{align*}
d[(\zeta,1)\gamma]_{l}=\sum_{m=1}^{n+1}d\zeta_{m}\gamma_{ml}
=[(d\zeta,0)\gamma]_{l},
\end{align*} for $\zeta\in \mathbb{H}^{n+1}$, 
$ l=1,\cdots,n+1,$ where $d\zeta=(d\zeta_{1},\cdots,d\zeta_{n+1}).$ We differentiate (\ref{3.16}) to get
\begin{align*}
d\gamma(\zeta)=\left(\cdots,-\frac{
\overline{[(\zeta,1)\gamma]_{n+2}}[(d\zeta,0)\gamma]_{n+2}
\overline{[(\zeta,1)\gamma]_{n+2}}}
{|[(\zeta,1)\gamma]_{n+2}|^{4}}[(\zeta,1)\gamma]_{l}+
[(\zeta,1)\gamma]^{-1}_{n+2}[(d\zeta,0)\gamma]_{l},\cdots\right).
\end{align*}
We find that $\overline{\gamma({\zeta})_{l}}
=\overline{[(\zeta,1)\gamma]_{l}}\cdot
\overline{[(\zeta,1)\gamma]^{-1}_{n+2}}$ and
\begin{equation}\label{2.42}
\begin{aligned}
\quad&\sum_{l=1}^{n+1}{d\gamma(\zeta)_{l}}\overline{\gamma({\zeta})_{l}}\\
=&-\frac{\overline{[(\zeta,1)\gamma]_{n+2}}
{[(d\zeta,0)\gamma]_{n+2}}
}{|[(\zeta,1)\gamma]_{n+2}|^{4}}\sum_{l=1}^{n+1}|[(\zeta,1)\gamma]_{l}|^{2}
+{[(\zeta,1)\gamma]^{-1}_{n+2}}\sum_{l=1}^{n+1}
{[(d\zeta,0)\gamma]_{l}}
\overline{[(\zeta,1)\gamma]_{l}}\cdot\overline{[(\zeta,1)\gamma]^{-1}_{n+2}}\\
=&-\frac{\overline{[(\zeta,1)\gamma]_{n+2}}{[(d\zeta,0)\gamma]_{n+2}}}
{|[(\zeta,1)\gamma]_{n+2}|^{2}}
+{[(\zeta,1)\gamma]^{-1}_{n+2}}\sum_{l=1}^{n+1}
{[(d\zeta,0)\gamma]_{l}}\overline{[(\zeta,1)\gamma]_{l}}\cdot
\overline{[(\zeta,1)\gamma]^{-1}_{n+2}}\\
=&{\lambda}\frac{\sum_{l=1}^{n+1}d{\zeta}_{l}
\bar{\zeta}_{l}}{|[(\zeta,1)\gamma]_{n+2}|^{2}}\bar{\lambda},
\end{aligned}
\end{equation}
by $\gamma$ preserving the hyperhermitian form $Q(\cdot,\cdot)$ in (\ref{2.31}), i.e.,
\begin{align}\label{2.36}
-\sum_{l=1}^{n+1}{[(\zeta,1)\gamma]_{l}}\overline{[(\zeta',1)\gamma]_{l}}
+{[(\zeta,1)\gamma]_{n+2}}\overline{[(\zeta',1)\gamma]_{n+2}}
=-{\zeta}\bar{\zeta'}+1
\end{align}
with $\zeta=\zeta'$ in  the third identity and $|\zeta|^{2}=1$. The last identity follows from differentiating (\ref{2.36}) with respect to $\zeta$ and
setting $\zeta'=\zeta$ to get
$$\sum_{l=1}^{n+1}{[(d\zeta,0)\gamma]_{l}}\overline{[(\zeta,1)\gamma]_{l}}-
{[(d\zeta,0)\gamma]_{n+2}}\overline{[(\zeta,1)\gamma]_{n+2}}=
\sum_{l=1}^{n+1}d{\zeta}_{l}\bar{\zeta}_{l}.$$
Then (\ref{2.42}) minus its conjugate gives
\begin{equation*}
\begin{aligned}
\quad&\sum_{l=1}^{n+2}\left(d{\gamma(\zeta)}_{l}\overline{\gamma({\zeta})_{l}}-
{\gamma(\zeta)}_{l}d\overline{\gamma({\zeta})_{l}}\right)={\lambda}\frac{\Theta_{S}}{|
[(\zeta,1)\gamma]_{n+2}|^{2}}\bar{\lambda}.
\end{aligned}
\end{equation*}
The proposition is proved.
\end{proof}

Let $X_{0}$ be the tangential vector given by differentiating along  a curve $\eta(t)=\left(\eta_{1}(t),\cdots,\eta_{n+1}(t)\right)$ at point $\zeta.$
Then the tangential  vector $\gamma_{*}X_{0}$  at point $\gamma(\zeta)$ is given by differentiating along the curve $\gamma(\eta(t)),$ i.e.
$\left(\left.\frac{d}{dt}\right|_{t=0}\gamma(\eta(t))_{1}
,\cdots,\\\left.\frac{d}{dt}\right|_{t=0}\gamma(\eta(t))_{n+1}\right)$,
where
\begin{equation}\label{251}
\begin{aligned} \left.\frac{d}{dt}\right|_{t=0}\gamma(\eta(t))_{l}
={[(\zeta,1)\gamma]_{n+2}^{-1}}[(\eta'(0),0)\gamma]_{l}
-A_{\zeta}\cdot[(\zeta,1)\gamma]_{l}
\end{aligned}
\end{equation}
by the definition (\ref{3.16}). Here
\begin{align}\label{2277}
A_{\zeta}=\left.\frac{d}{dt}\right|_{t=0}[(\eta(t),1)\gamma]_{n+2}^{-1}
=\frac{\overline{[(\zeta,1)\gamma]_{n+2}}
[(\eta'(0),0)\gamma]_{n+2}
\overline{[(\zeta,1)\gamma]_{n+2}}}
{|[(\zeta,1)\gamma]_{n+2}|^{4}}\in \mathbb{H}
\end{align}
by (\ref{2.40}).

The following proposition gives us the conformal factor of the transformation of ${\rm Sp}(n+1,1)$ on the sphere with the standard qc metric.

\begin{prop}\label{1234}
For $\gamma \in {\rm Sp}(n+1,1)$, we have
\begin{align}\label{252}
H_{\gamma(\zeta)}=\gamma_{*}H_{\zeta} \quad {\rm and}\quad
\gamma^{*}g_{S}=\frac{1}{|
[(\zeta,1)\gamma]_{n+2}|^{2}}{g_{S}}.
\end{align}
\end{prop}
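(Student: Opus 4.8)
The plan is to follow the proof of Corollary~\ref{c2.1}, with Proposition~\ref{p2.2} playing the role that Proposition~\ref{p2.1} played there. First I would dispose of the claim $H_{\gamma(\zeta)}=\gamma_{*}H_{\zeta}$. Left and right multiplication by the unit quaternion $\lambda$ of (\ref{5.11}) are invertible $\mathbb{R}$-linear maps of ${\rm Im}\,\mathbb{H}=\mathbb{R}^{3}$, and the scalar $|[(\zeta,1)\gamma]_{n+2}|^{2}$ is nonzero, so by (\ref{111}) the forms $\gamma^{*}\Theta_{S}|_{\zeta}$ and $\Theta_{S}|_{\zeta}$ have the same kernel, namely $H_{\zeta}$. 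Since $\gamma^{*}\Theta_{S}|_{\zeta}(X_{0})=\Theta_{S}|_{\gamma(\zeta)}(\gamma_{*}X_{0})$ for any tangent vector $X_{0}$ at $\zeta$, we get $X_{0}\in H_{\zeta}$ if and only if $\gamma_{*}X_{0}\in H_{\gamma(\zeta)}$; as $\gamma_{*}$ is a linear isomorphism and $\dim H_{\zeta}=\dim H_{\gamma(\zeta)}=4n$, this gives $\gamma_{*}H_{\zeta}=H_{\gamma(\zeta)}$.

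For the conformal factor I would argue as in the identification $\mu=\lambda$ of Corollary~\ref{c2.1}. Conjugation by the unit quaternion $\lambda$ acts on ${\rm Im}\,\mathbb{H}$ as an element of ${\rm SO}(3)$, so one can write $\lambda\Theta_{S}\bar{\lambda}=(\sum_{k}a_{1k}\theta_{k},\,\sum_{k}a_{2k}\theta_{k},\,\sum_{k}a_{3k}\theta_{k})$ for an ${\rm SO}(3)$-valued smooth function $(a_{jk})$, where $\Theta_{S}=(\theta_{1},\theta_{2},\theta_{3})$. By (\ref{111}) the $j$-th component of $\gamma^{*}\Theta_{S}$ is then $\nu\sum_{k}a_{jk}\theta_{k}$ with $\nu:=|[(\zeta,1)\gamma]_{n+2}|^{-2}$. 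Applying $d$ and restricting to $H$, where every $\theta_{k}$ vanishes, kills the terms $d\nu\wedge\theta_{k}$ and $da_{jk}\wedge\theta_{k}$ and leaves $\gamma^{*}\omega_{j}|_{H}=\nu\sum_{k}a_{jk}\,\omega_{k}|_{H}$, where $\omega_{s}:=g_{S}(I_{s}\cdot,\cdot)=d\theta_{s}$. Feeding this into the fundamental $4$-form $\Omega=\omega_{1}\wedge\omega_{1}+\omega_{2}\wedge\omega_{2}+\omega_{3}\wedge\omega_{3}$ and using $(a_{jk})\in{\rm SO}(3)$ yields $\gamma^{*}\Omega|_{H}=\nu^{2}\Omega|_{H}$.

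Now I would copy the argument of Corollary~\ref{c2.1} essentially verbatim: by Lemma 9.1 in \cite{Salamon} an element of ${\rm GL}(4n,\mathbb{R})$ preserves $\Omega$ if and only if it lies in ${\rm Sp}(n){\rm Sp}(1)$, and ${\rm Sp}(n){\rm Sp}(1)$ acts transitively on each $g_{S}$-sphere of $H_{\zeta}\cong\mathbb{H}^{n}$; combined with $\gamma^{*}\Omega|_{H}=\nu^{2}\Omega|_{H}$ and the definition (\ref{322}) of the pullback of an endomorphism, this forces $\gamma^{*}g_{S}=\mu\,g_{S}$ on $H_{\zeta}$ for some positive $\mu$. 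To pin down $\mu$ I would compare volume forms on $H$ two ways: $\gamma^{*}g_{S}=\mu g_{S}$ gives $\gamma^{*}{\rm Vol}|_{H}=\mu^{2n}{\rm Vol}|_{H}$, while $\gamma^{*}d\theta_{s}|_{H}=\nu\,\hat{\omega}_{s}|_{H}$ with $\hat{\omega}_{s}=g_{S}(\hat{I}_{s}\cdot,\cdot)$ and $\hat{\mathbb{I}}=(\hat{I}_{1},\hat{I}_{2},\hat{I}_{3})=\lambda\,\mathbb{I}\,\bar{\lambda}$ another $g_{S}$-compatible triple gives $(\gamma^{*}d\theta_{s})^{2n}=\nu^{2n}\hat{\omega}_{s}^{2n}$, hence $\gamma^{*}{\rm Vol}|_{H}=\nu^{2n}{\rm Vol}|_{H}$. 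Therefore $\mu=\nu=|[(\zeta,1)\gamma]_{n+2}|^{-2}$, which is (\ref{252}).

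The main obstacle is not any single computation but the bookkeeping in the step modelled on Corollary~\ref{c2.1}: keeping straight which object ($g_{S}$, the triple $\mathbb{I}$, the $4$-form $\Omega$) is being pulled back or pushed forward at $\zeta$ versus at $\gamma(\zeta)$, and checking that conjugation by $\lambda$ genuinely supplies an ${\rm SO}(3)$ rotation of the $\theta_{s}$ and a $g_{S}$-compatible triple $\hat{\mathbb{I}}$. If one prefers, the qc Liouville-type result (that ${\rm Sp}(n+1,1)/{\rm center}$ acts by conformal qc transformations of $S^{4n+3}$, \cite{Ivanov4}) already makes $\gamma^{*}g_{S}$ a pointwise multiple of $g_{S}$ on $H$, and then only the evaluation $\mu=\nu$, via Proposition~\ref{p2.2} and the volume-form comparison, remains to be carried out.
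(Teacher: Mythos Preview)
Your proof is correct, but it follows a genuinely different route from the paper's. The paper proves Proposition~\ref{1234} by direct computation: for the first claim it takes a horizontal curve $\eta(t)$ through $\zeta$, expands $\left\langle \left.\frac{d}{dt}\right|_{t=0}\gamma(\eta(t)),\gamma(\zeta)\right\rangle$ using the explicit formula (\ref{251}) and the invariance (\ref{2.36}) of the hyperhermitian form, and shows it vanishes; for the conformal factor it computes $\|\gamma_{*}X_{0}\|^{2}$ by brute force from (\ref{251}), again using (\ref{2.36}) and its derivatives to collapse the sums. Your argument instead recycles the fundamental $4$-form machinery already set up in Corollary~\ref{c2.1}, using only Proposition~\ref{p2.2} as input: the kernel argument for $H_{\gamma(\zeta)}=\gamma_{*}H_{\zeta}$ is cleaner than the paper's curve computation, and the $\Omega$/volume-form comparison avoids the term-by-term expansion of $\|\gamma_{*}X_{0}\|^{2}$. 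What the paper's approach buys is independence from Corollary~\ref{c2.1} and from the Salamon lemma on stabilizers of $\Omega$; what yours buys is uniformity of method (one argument works for both $F$ and $\gamma$) and no coordinate manipulation. Both are valid and either could appear in print; your alternative Liouville-type shortcut at the end is also legitimate but somewhat circular in spirit, since the paper is establishing these formulae precisely to have the explicit constants in hand.
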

\begin{proof}
Let $\eta$ be a curve in $S^{4n+3}$ with $\eta(0)=\zeta$ such that $\eta'(0) $ is horizontal. By $\gamma$ preserving the hyperhermitian form in
(\ref{2.36}) again, we have
\begin{align}\label{253}
-\sum_{l=1}^{n+1}{[(\eta(t),1)\gamma]_{l}}\overline{[(\eta(s),1)\gamma]_{l}}+
{[(\eta(t),1)\gamma]_{n+2}}\overline{[(\eta(s),1)\gamma]_{n+2}}=-
\langle\eta(t),\eta(s)\rangle+1.
\end{align}
Differentiate it with respect to $t$ at $0$ and then let $s\rightarrow 0$ to get
\begin{align}\label{254}
-\sum_{l=1}^{n+1}
{[(\eta'(0),0)\gamma]_{l}}\overline{[(\zeta,1)\gamma]_{l}}+
{[(\eta'(0),0)\gamma]_{n+2}}\overline{[(\zeta,1)\gamma]_{n+2}}
=-\langle\eta'(0),{\zeta}\rangle=0
\end{align}
by $\eta'(0)\in H_{\zeta}.$   Then we have
\begin{equation*}
\begin{aligned}
&\left\langle\left.
\frac{d\gamma(\eta(t))}{dt}\right|_{t=0},\gamma(\zeta)\right\rangle=\sum_{l=1}^{n+1}\left({{[(\zeta,1)\gamma]_{n+2}^{-1}}}
\cdot{[(\eta'(0),0)\gamma]_{l}}-
{A_{\zeta}}\cdot{[(\zeta,1)\gamma]_{l}}
\right)\overline{{[(\zeta,1)\gamma]_{l}}}\cdot
\overline{[(\zeta,1)\gamma]^{-1}_{n+2}}=0,
\end{aligned}
\end{equation*}
by (\ref{251}), (\ref{254}) and
\begin{equation}\label{878}\begin{aligned}
{A_{\zeta}}\sum_{l=1}^{n+1}{[(\zeta,1)\gamma]_{l}}
\overline{[(\zeta,1)\gamma]_{l}}\cdot
\overline{[(\zeta,1)\gamma]^{-1}_{n+2}}={A_{\zeta}}\cdot {[(\zeta,1)\gamma]_{n+2}} ={[(\zeta,1)\gamma]_{n+2}^{-1}} \cdot{[(\eta'(0),0)\gamma]_{n+2}}
\end{aligned}\end{equation}
by (\ref{2.36}) and (\ref{2277}). It implies that $\left.\frac{d}{dt}\right|_{t=0}\gamma(\eta(t))\in H_{\gamma(\zeta)}.$ Thus $\gamma_{*}H_{\zeta}\subset
H_{\gamma(\zeta)}.$ They actually coincide since $\gamma$ is invertible. Denote $\|X\|^{2}:=g_{S}(X,X)$ for $X\in H.$
Moreover, by (\ref{251}) and (\ref{254}) again we have
\begin{equation*}
\begin{aligned}
\frac{1}{4}\|\gamma_{*}X_{0}\|^{2}&=
\sum^{n+1}_{l=1}\left|\left.\frac{d}{dt}\right|_{t=0}
\gamma(\eta(t))_{l}\right|^{2}
=\sum_{l=1}^{n+1}\left|
[(\zeta,1)\gamma]_{n+2}^{-1}[(\eta'(0),0)\gamma]_{l}\right|^{2}\\
&\quad+\sum_{l=1}^{n+1}\left|A_{\zeta}[(\zeta,1)\gamma]_{l}\right|^{2}
-2{\rm Re}\sum_{l=1}^{n+1}
{[(\zeta,1)\gamma]_{n+2}^{-1}[(\eta'(0),0)\gamma]_{l}}
\cdot{\overline{[(\zeta,1)\gamma]_{l}}\cdot\overline{A_{\zeta}}}\\
&=\frac{\sum_{l=1}^{n+1}|[(\eta'(0),0)\gamma]_{l}|^{2}}
{|[(\zeta,1)\gamma]_{n+2}|^{2}}+
|A_{\zeta}|^{2}\sum_{l=1}^{n+1}|[(\zeta,1)\gamma]_{l}|^{2}-
2\frac{|[(\eta'(0),0)\gamma]_{n+2}|^{2}}
{|[(\zeta,1)\gamma]_{n+2}|^{2}} =\frac{\sum_{l=1}^{n+1}|\eta_{l}'(0)|^{2}}
{|[(\zeta,1)\gamma]_{n+2}|^{2}}.
\end{aligned}
\end{equation*}
Here we have used 
$$\sum_{l=1}^{n+1}\left|[(\eta'(0),0)\gamma]_{l}\right|^{2}
-\left|[(\eta'(0),0)\gamma]_{n+2}\right|^{2}
=\sum_{l=1}^{n+1}\left|\eta_{l}'(0)\right|^{2},$$
which follows from differentiating (\ref{253}) with respect to $s$ and $t$ at $0.$
Then by  $\gamma^{*}g_{S}(X_{0},X_{0})=g_{S}(\gamma_{*}X_{0},\gamma_{*}X_{0})$, we complete the proof of (\ref{252}).
\end{proof}
Now we give the pull back formula of the quaternionic structure $\mathbb{I}$ in the following proposition.
\begin{cor}\label{p2.5}
For $\gamma \in {\rm Sp}(n+1,1)$, we have
\begin{align*}
\gamma^{*}\mathbb{I}=\lambda{\mathbb{I}}\bar{\lambda},
\end{align*}
where $\lambda$ is a unit quaternion given by (\ref{5.11}).
\end{cor}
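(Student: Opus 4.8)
The plan is to read off $\gamma^{*}\mathbb{I}$ from the defining compatibility relation $g_{S}(I_{s}X,Y)=d\theta_{s}(X,Y)$ on $S^{4n+3}$, feeding in the two transformation laws just established. Write $c:=[(\zeta,1)\gamma]_{n+2}$, so that $\lambda=\bar c/|c|$ by (\ref{5.11}). By Proposition \ref{1234} we have $\gamma_{*}H_{\zeta}=H_{\gamma(\zeta)}$, so each $\gamma^{*}I_{s}$ is a well-defined endomorphism of $H_{\zeta}$, and $\gamma^{*}g_{S}=|c|^{-2}g_{S}$. By Proposition \ref{p2.2}, $\gamma^{*}\Theta_{S}=\lambda\,|c|^{-2}\,\Theta_{S}\,\bar\lambda$.

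First I would expand this last identity through the quaternion/${\rm SO}(3)$ dictionary: writing $\lambda\,\mathbf{i}_{t}\,\bar\lambda=\sum_{s}\Psi_{st}\mathbf{i}_{s}$ defines a matrix $(\Psi_{st})$ with values in ${\rm SO}(3)$ (depending smoothly on $\zeta$), whence componentwise $\gamma^{*}\theta_{s}=|c|^{-2}\sum_{t}\Psi_{st}\theta_{t}$. Next I would differentiate and restrict to $H_{\zeta}$: in $d(\gamma^{*}\theta_{s})=\sum_{t}\bigl(d(|c|^{-2}\Psi_{st})\wedge\theta_{t}+|c|^{-2}\Psi_{st}\,d\theta_{t}\bigr)$ the terms in which $d$ hits the conformal factor or the frame $\Psi$ drop out after restriction, since each $\theta_{t}$ annihilates $H_{\zeta}$; hence
\[\gamma^{*}(d\theta_{s})\big|_{H_{\zeta}}=|c|^{-2}\sum_{t}\Psi_{st}\,d\theta_{t}\big|_{H_{\zeta}},\]
with all coefficients evaluated at $\zeta$. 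This is the same mechanism as in (\ref{321}) in the proof of Lemma \ref{l2.1}.

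Then I would pull back the compatibility relation along $\gamma$, using $\gamma_{*}H_{\zeta}=H_{\gamma(\zeta)}$: for $X,Y\in H_{\zeta}$,
\[(\gamma^{*}g_{S})\bigl(\gamma^{*}I_{s}(X),Y\bigr)=\gamma^{*}(d\theta_{s})(X,Y).\]
Substituting $\gamma^{*}g_{S}=|c|^{-2}g_{S}$ and the displayed formula, the common factor $|c|^{-2}$ cancels and one is left with $g_{S}(\gamma^{*}I_{s}X,Y)=\sum_{t}\Psi_{st}\,d\theta_{t}(X,Y)=g_{S}\bigl(\sum_{t}\Psi_{st}I_{t}X,\,Y\bigr)$ for all $X,Y\in H_{\zeta}$; nondegeneracy of $g_{S}$ on $H_{\zeta}$ forces $\gamma^{*}I_{s}=\sum_{t}\Psi_{st}I_{t}$. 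Since each $I_{t}$ on $H_{\zeta}\subset\mathbb{H}^{n+1}$ is left multiplication by $\mathbf{i}_{t}$, this says exactly that $\gamma^{*}I_{s}$ is left multiplication by $\sum_{t}\Psi_{st}\mathbf{i}_{t}$, i.e.\ by the $\lambda$-conjugate of $\mathbf{i}_{s}$, which is the assertion $\gamma^{*}\mathbb{I}=\lambda\mathbb{I}\bar\lambda$. (This is the analogue, for the action of ${\rm Sp}(n+1,1)$, of the relation $\hat{\mathbb{I}}=\alpha\mathbb{I}'\bar\alpha$ appearing in the proof of Corollary \ref{c2.1}.)

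An alternative, purely computational, route is to insert the explicit formula (\ref{251}) for $\gamma_{*}$ on a horizontal vector $X\leftrightarrow v\in H_{\zeta}$ into $\gamma^{*}I_{s}(X)=\gamma_{*}^{-1}\bigl(I_{s}(\gamma_{*}X)\bigr)$. Using the $\mathbb{H}$-linearity of the right action $v\mapsto(v,0)\gamma$, one guesses $\gamma^{*}I_{s}(X)$ to be the left scalar multiple of $X$ by the suitable $\lambda$-conjugate of $\mathbf{i}_{s}$, and then checks both that with this multiplier the inhomogeneous ``$A_{\zeta}$''-term of (\ref{251}) transforms consistently (this uses only $\bar c\,c=|c|^{2}$ and $c^{-1}=\bar c/|c|^{2}$) and that the resulting vector again lies in $H_{\zeta}$ (immediate from $\langle\mu v,\zeta\rangle=\mu\langle v,\zeta\rangle$ for a scalar $\mu\in\mathbb{H}$). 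Either way, the main — indeed the only — obstacle is the bookkeeping with noncommuting quaternions, especially keeping track of the order of $\lambda$ and $\bar\lambda$ around the extra correction term of $\gamma_{*}$; no analytic input beyond Propositions \ref{1234} and \ref{p2.2} is required.
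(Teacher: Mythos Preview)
Your proof is correct and follows exactly the approach the paper intends: the paper's one-line proof simply cites equation (\ref{321}) (the mechanism from the proof of Lemma~\ref{l2.1}) together with Proposition~\ref{p2.2}, and you have spelled out precisely that argument, including the extra step of cancelling the conformal factor $|c|^{-2}$ via Proposition~\ref{1234}. Your identification of $\gamma^{*}I_{s}=\sum_{t}\Psi_{st}I_{t}$ with the assertion $\gamma^{*}\mathbb{I}=\lambda\mathbb{I}\bar\lambda$ is the correct reading of the paper's notation (it is the same convention used for $\Theta_{S}$ in (\ref{111})); the alternative computational route you sketch is not needed, though it would also work.
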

\begin{proof}
It follows from (\ref{321}) in the proof of  Proposition \ref{p2.2}.
\end{proof}
\subsection{Spherical qc manifolds and connected sums}\label{s2.5}
A local sphere theorem was proved by Ivanov and Vassilev in  \cite{Ivanov2}, i.e. a qc manifold is spherical if and only if its conformal qc curvature of
the Biquard connection vanishes. See also \cite{Alt} for a proof using parabolic geometry.
The following proposition tells us that this definition coincides with the topological definition gives at the beginning of this paper.
\begin{prop}\label{p2.4}
A qc manifold $(M,g,\mathbb{Q})$ is  {spherical} if and only if  it is a manifold with coordinate charts $\{(U_{i},\phi_{i})\}$, where
$\phi_{i}:U_{i}\rightarrow S^{4n+3}$  and transition  maps are given by induced action of elements of ${\rm Sp}(n+1,1).$
\end{prop}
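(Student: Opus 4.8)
The plan is to prove the two implications separately, using the local sphere theorem (Ivanov--Vassilev) to translate "spherical" into "locally conformally qc equivalent to an open set of $\mathscr{H}^n$", and then using the qc Liouville type theorem (Theorem \ref{liou}) together with the fact that the Cayley transform identifies $\mathscr{H}^n$ with $S^{4n+3}$ minus a point (Proposition \ref{p2.1}) to pass freely between the Heisenberg model and the sphere model.

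\textbf{($\Leftarrow$)} Suppose $(M,g,\mathbb{Q})$ has an atlas $\{(U_i,\phi_i)\}$ with $\phi_i\colon U_i\to S^{4n+3}$ and transition maps $\phi_j\circ\phi_i^{-1}$ equal to restrictions of elements of ${\rm Sp}(n+1,1)$. By Proposition \ref{1234} (combined with Corollary \ref{p2.5}), every $\gamma\in{\rm Sp}(n+1,1)$ acts on $S^{4n+3}$ as a conformal qc transformation: $\gamma^*g_S=|[(\zeta,1)\gamma]_{n+2}|^{-2}g_S$ and $\gamma^*\mathbb{Q}_S=\mathbb{Q}_S$. Hence on overlaps $U_i\cap U_j$ the pullbacks $\phi_i^*g_S$ and $\phi_j^*g_S$ differ by a positive conformal factor, and $\phi_i^*\mathbb{Q}_S=\phi_j^*\mathbb{Q}_S$; therefore the local qc structures $(\phi_i^*g_S,\phi_i^*\mathbb{Q}_S)$ glue to a globally defined conformal class $[g']$ with a rank-three bundle $\mathbb{Q}'$ on $M$. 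First I would check this glued conformal qc structure agrees with $[g]$ and $\mathbb{Q}$: this is where one uses that the $\phi_i$ are, by hypothesis, qc charts for $(M,g,\mathbb{Q})$ — the statement of the proposition is implicitly that the charts are compatible with the given structure, so $\phi_i^*g_S\in[g|_{U_i}]$ and $\phi_i^*\mathbb{Q}_S=\mathbb{Q}|_{U_i}$. Then each $\phi_i$ is by construction a conformal qc diffeomorphism onto an open subset of $S^{4n+3}$, which via the Cayley transform $F$ (Proposition \ref{p2.1}) is locally conformal qc equivalent to an open set of $\mathscr{H}^n$; composing, $M$ is locally conformally qc equivalent to open subsets of $\mathscr{H}^n$, i.e. spherical.

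\textbf{($\Rightarrow$)} Conversely, assume $(M,g,\mathbb{Q})$ is spherical. By definition, around each point there is a conformal qc diffeomorphism from a neighborhood onto an open set of $\mathscr{H}^n$; composing with (the inverse of) the stereographic projection $F$ of \eqref{2.24}, which by Corollary \ref{c2.1} is a conformal qc diffeomorphism, we obtain an atlas $\{(U_i,\phi_i)\}$ with $\phi_i\colon U_i\to S^{4n+3}$ conformal qc embeddings. On an overlap $U_i\cap U_j$ the transition map $\psi_{ij}:=\phi_j\circ\phi_i^{-1}$ is a conformal qc diffeomorphism between open subsets of $S^{4n+3}$. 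By the qc Liouville type theorem (Theorem \ref{liou}), $\psi_{ij}$ extends to a global conformal qc transformation of $S^{4n+3}$, and since the group of conformal qc transformations of $S^{4n+3}$ is ${\rm Sp}(n+1,1)/\{\pm{\rm center}\}$ (the quaternionic fractional linear transformations), $\psi_{ij}$ is induced by an element of ${\rm Sp}(n+1,1)$. Thus the transition maps are given by the induced action of elements of ${\rm Sp}(n+1,1)$, as required. I would also remark that one may need to shrink the $U_i$ so that each is connected and each overlap $U_i\cap U_j$ is connected (or treat components separately), because the Liouville theorem produces a global extension on each connected piece, and one wants a single group element per overlap component.

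The main obstacle is the careful bookkeeping in the ($\Rightarrow$) direction around \emph{which} conformal qc transformations of $S^{4n+3}$ actually occur: the Liouville theorem gives a global extension, but one must cite that the full conformal qc transformation group of the round sphere is exactly (a quotient of) ${\rm Sp}(n+1,1)$ — this is the content of \cite{Ivanov4} quoted just before Theorem \ref{liou} — and verify there are no orientation-type subtleties. The rest is essentially formal gluing, modulo the connectedness remark above and the observation that every element of ${\rm Sp}(n+1,1)$ is a conformal qc transformation of $S^{4n+3}$, which is exactly Propositions \ref{p2.2}, \ref{1234} and Corollary \ref{p2.5}.
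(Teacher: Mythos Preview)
Your argument is correct and, for the ($\Rightarrow$) direction, matches the paper exactly: both invoke the qc Liouville theorem (Theorem \ref{liou}), with your version spelling out the identification of the conformal group of the sphere with ${\rm Sp}(n+1,1)/\text{center}$ and the connectedness caveat that the paper leaves implicit.

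For the ($\Leftarrow$) direction the paper takes a slightly different technical route. Rather than assuming the $\phi_i$ are already qc charts for the given $(g,\mathbb{Q})$ and gluing the \emph{conformal class}, the paper treats the atlas as purely topological data and builds a concrete representative metric via a partition of unity: with $\{\chi_k\}$ subordinate to $\{U_k\}$ it sets $g:=\sum_k\chi_k\,\phi_k^*g_S$, and then checks, using Proposition \ref{1234} applied to each transition $\phi_{ki}\in{\rm Sp}(n+1,1)$, that $(\phi_i^{-1})^*g$ is a positive scalar multiple of $g_S$ on $\phi_i(U_i)$, so that $g$ is genuinely a spherical qc metric. The bundle $\mathbb{Q}$ is handled similarly by verifying that the pullbacks $\phi_{ji}^*\mathbb{I}$ satisfy the cocycle condition (via Corollary \ref{p2.5}). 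The paper's construction has the advantage of producing an honest metric without any compatibility hypothesis on the charts, which is what one needs to justify the ``topological'' description of a spherical qc manifold emphasized in the introduction; your approach is cleaner when the charts are already assumed qc-compatible with the given structure, but would still need a step (e.g.\ exactly this partition-of-unity trick) to extract an actual metric from the glued conformal class if that hypothesis is dropped.
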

\begin{proof}
The necessity follows from the qc Liouville type Theorem \ref{liou}.

Given such coordinate charts $\{(U_{i},\phi_{i})\}$ of $M,$ let us construct a qc metric $g$ and a bundle $\mathbb{Q}$ on $M.$  Let $\chi_{i}$ be a unit
partition subordinating to the cover $\{U_{i}\}$, i.e. ${\rm supp}\chi_{i}\subset U_{i}$ and $\sum_{i}\chi_{i}\equiv1.$ Let
$g:=\sum_{k}\chi_{k}\phi_{k}^{*}g_{S}$ be a Carnot-Carath\'eodory metric on $H_{p}=\phi_{i}^{*}H_{\phi_{i}(p)},$ where $H_{\phi_{i}(p)}$ is the horizontal
subspace of the sphere at the point $\phi_{i}(p)$. This definition of $H_{p}$ is independent of  the choice of $i,$ since
$(\phi_{ji})_{*}H_{\phi_{i}(p)}=H_{\phi_{j}(p)}$ by Proposition \ref{1234},  where transition map $\phi_{ji}:=\phi_{j}\circ\phi_{i}^{-1}\in {\rm
Sp}(n+1,1).$  Then on the open set $\phi_{i}(U_{i})\subset S^{4n+3},$ we have
\begin{align*}
\left(\phi_{i}^{-1}\right)^{*}g=\left(\phi_{i}^{-1}\right)^{*}
\left(\sum_{k}\chi_{k}\phi_{k}^{*}g_{S}\right)
=\chi_{i}\circ\phi_{i}^{-1}\cdot g_{S}+\sum_{k\neq i}\chi_{k}\circ\phi_{i}^{-1}\cdot \phi_{ki}^{*}g_{S}
=Bg_{S}
\end{align*}
for some positive function $B$ on $\phi_{i}(U_{i})$ by
the pull  back formula (\ref{252}) for the metric $g_{S}$ in Proposition \ref{1234}.  Therefore $g$ is a spherical qc metric on $M$. By Proposition
\ref{p2.5}, we have
\begin{equation*}
\begin{aligned}
\phi_{ji}^{*}\mathbb{I} =\nu\mathbb{I}\bar{\nu}\quad {\rm with}\  \nu(\zeta)=\frac{\overline{[(\zeta,1)\phi_{ji}]_{n+2}}}
{|[(\zeta,1)\phi_{ji}]_{n+2}|},
\end{aligned}
\end{equation*}
on $U_{i}\cap U_{j}.$ We have $\phi_{ii}^{*}(\mathbb{I})=\mathbb{I}$ and $\phi_{kj}^{*}\circ \phi_{ji}^{*}(\mathbb{I})=\phi_{ki}^{*}(\mathbb{I}).$ Namely
$\phi_{ji}^{*}$ satisfy the cocycle  condition and define a sphere bundle.  We can choose  $\Theta=\phi_{i}^{*}(B\Theta_{S})$ locally, and the
compatibility  condition (\ref{2.01})-(\ref{2,1}) for $g$ and $\Theta$ obviously holds, since $d\Theta=\phi_{i}^{*}(Bd\Theta_{S})$ when restricted to the
horizontal subspace.
\end{proof}
For $\xi\in\mathscr{H}^{n}$ and $\epsilon>0,$ define a {\it ball on the quaternionic Heisenberg group} as
\begin{align*}
B_{\mathscr{H}}(\xi,\epsilon):=\{\eta\in\mathscr{H}^{n}
;\|\xi^{-1}\cdot\eta\|<\epsilon\}.
\end{align*}
Let $(M,g,\mathbb{Q})$ be a spherical qc manifold of dimension $4n+3$ with two punctures $\eta_{1},\eta_{2}$, or disjoint union of two connected spherical
qc manifolds $(M_{(1)},g_{(1)},\mathbb{Q}_{(1)}),$ $(M_{(2)},g_{(2)},\mathbb{Q}_{(2)})$ with one puncture $\eta_{i}\in M_{(i)},i=1,2$, each. Let $U_{1}$
and $U_{2}$ be two disjoint neighborhoods of $\eta_{1}$ and $\eta_{2}$, respectively. Let
\begin{align}\label{265}
\psi_{i}:U_{i}\rightarrow B_{\mathscr{H}}(0,2),\quad i=1,2,
\end{align}
be local coordinate charts such that $\psi_{i}(\eta_{i})=0$.
For $t<1$, define
\begin{equation*}
\begin{aligned}
U_{i}(t,1):&=\{\eta\in U_{i};t<\|\psi_{i}(\eta)\|<1\},\\
U_{i}(t):&=\{\eta\in U_{i};\|\psi_{i}(\eta)\|<t\},
\end{aligned}
\end{equation*}
$i=1,2.$ For any $t\in(0,1), A\in {\rm Sp}(n)$ and $\sigma \in {\rm Sp}(1)$, we can form a new spherical qc manifold $M_{t,\sigma,A}$ by removing the
closed balls $\overline{U_{i}(t)},i=1,2,$ and gluing  $U_{1}(t,1)$ with $U_{2}(t,1)$ by the conformal qc  mapping $\Psi_{t,\sigma,A}:U_{1}(t,1)\rightarrow
U_{2}(t,1)$ defined by
\begin{align}\label{2.60}
\Psi_{t,\sigma,A}(\eta)=\psi_{2}^{-1}\circ D_{t}\circ R\circ\sigma\circ {A}\circ\psi_{1}(\eta), \ {\rm for} \ \eta\in U_{1}(t,1),
\end{align}
where $R:\{\zeta\in \mathscr{H}^{n};t<\|\zeta\|<1\}\rightarrow\{\zeta\in \mathscr{H}^{n};1<\|\zeta\|<\frac{1}{t}\}$  is the inversion  in  (\ref{44}). Note that $\Psi_{t,\sigma,A}$ is conformal qc with respect to the spherical qc structure
$\mathbb{Q}$ on $M$, since $D_{t}\circ R\circ\sigma\circ U_{A}$ is a qc automorphism of the quaternionic Heisenberg group.  Such a conformal qc
transformation maps $U_{1}(t,1)$ to $U_{2}(t,1),$ which identifies the inner boundary of $U_{1}(t,1)$ with the outer boundary $U_{2}(t,1)$ and vise versa.
Let
\begin{align}\label{54}
\pi_{t,\sigma,A}:(M_{1}\backslash \overline{U_{1}(t)})\cup (M_{2}\backslash \overline{U_{2}(t)})\rightarrow M_{t,\sigma,A}
\end{align}
be a canonical projection.  We call $M_{t,\sigma,A}$ the \emph{connected sum} of $M_{1}$ and $M_{2}$, which is a spherical  qc manifold by Proposition
\ref{p2.4}. We denote this spherical qc manifold by $(M_{t,\sigma,A},g,\mathbb{Q}_{t,\sigma,A})$, where $g$ is a metric in the conformal class given by
Proposition \ref{p2.4}. As in the locally conformally case, the connected sums are expected to be not isomorphic for some different choices of
$t,\sigma,A$ \cite{Izeki1}. As in the locally conformally flat case, it is interesting to investigate their moduli space.
\section{The qc Yamabe operator and its Green function}
\subsection{The qc Yamabe operator}
On a qc manifold $(M,g,\mathbb{Q})$, let us  choose a local basis  $\{X_{j}\}_{j=1}^{4n}$ of the horizontal subspace $H$  as in \cite{wang2}.
We choose a local section $e_{1}$ of $H$ such that $g(e_{1},e_{1})=1.$ Then, $e_{1},I_{1}e_{1},I_{2}e_{1},I_{3}e_{1}$ are mutually orthonormal. Set
$I_{0}=id_{|H}$. Now choose a local section $e_{2}$ of $H$ orthonormal to span$\{I_{k}e_{1}|k=0,\cdots,3\}.$ Then $e_{2},I_{1}e_{2},I_{2}e_{2},I_{3}e_{2}$
are mutually orthonormal again and span$\{I_{k}e_{1}|k=0,\cdots,3\}$ $\bot$ span$\{I_{k}e_{2}|k=0,\cdots,3\}$. Repeating this procedure, we can find
$e_{1},\cdots,e_{n}$ such that $\{I_{k}e_{j}|j=1,\cdots,n,\ k=0,\cdots,3\}$ is a local orthonormal basis of $H$ under the metric $g.$ Set
\begin{align*}
X_{4l+\alpha+1}:=\sqrt{2}I_{\alpha }e_{l+1}
\end{align*}
for $l=0,\cdots,n-1,\ \alpha=0,\cdots,3.$ Then $$g(X_{j},X_{k})=2\delta_{jk}.$$
The Carnot-Carath\'eodory metric $g$ induces a dual metric on $H^{*}$, denoted  by $\langle\cdot,\cdot\rangle_{g}$. Since the Biquard connection $\nabla$
preserves $H$, we can write the covariant derivative as
$\nabla_{X_{j}}X_{k}=\Gamma_{jk}^{k'}X_{k'}.$
By  definition, $(\nabla_{X}\omega)Y=\nabla_{X}(\omega(Y))-\omega(\nabla_{X}Y)$ for a $1$-form $\omega\in\Omega^{1}(M).$
Then  we   define an $L^{2}$ inner product $\langle \cdot,\cdot\rangle_{g,\mathbb{Q}}$ on $\Gamma(H^{*})$ by
\begin{align*}
\langle \omega,\omega'\rangle_{g,\mathbb{Q}}:=\int_{M}\langle \omega,\omega'\rangle_{g}dV_{g,\mathbb{Q}},
\end{align*}
where the volume form $dV_{{g,\mathbb{Q}}}$  is
\begin{align}\label{320}
dV_{g,\mathbb{Q}}:={\theta}_{1}\wedge{\theta}_{2}\wedge
{\theta}_{3}\wedge(d{\theta}_{s})^{2n},\end{align}
$ s=1,2,3, $ if we choose $\Theta=({\theta}_{1},{\theta}_{2},{\theta}_{3})$ locally.
The  volume form is independent of $s$ and the choice of $\Theta$ by the follow proposition.

\begin{prop}\label{234}
The volume element $dV_{{g,\mathbb{Q}}}$  only depends on $g$ and $\mathbb{Q}$, not on $s$ or the choice of the $\mathbb{R}^{3}$-valued contact form
$\Theta=(\theta_{1},\theta_{2},\theta_{3}).$
\end{prop}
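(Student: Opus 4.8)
The plan is to reduce the $(4n{+}3)$-form $\theta_1\wedge\theta_2\wedge\theta_3\wedge(d\theta_s)^{2n}$ to a pointwise statement about the restriction of $d\theta_s$ to the horizontal distribution $H$, and then to use the elementary fact that the top exterior power of a metric-compatible nondegenerate $2$-form is a fixed constant multiple of the Riemannian volume form of $(H,g)$.

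First I would invoke Lemma~\ref{l2.1}: any other compatible $\mathbb R^3$-valued contact form with kernel $H$ is of the form $\Theta'=\Psi\Theta$ for a smooth map $\Psi\colon M\to{\rm SO}(3)$, and by~(\ref{321}) the associated almost complex structures satisfy $I'_s=\sum_{t}\Psi_{st}I_t$; since each row of $\Psi$ is a unit vector, $I'_s\in\mathbb Q$. The vertical subspace $V$ is intrinsic (it is the one produced by Biquard's theorem), so I may fix a local frame $T_1,T_2,T_3$ of $V$ and $X_1,\dots,X_{4n}$ of $H$ with $\theta_i(T_j)=\delta_{ij}$. Because every $\theta_i$ annihilates $H$, in the shuffle expansion of $\bigl(\theta_1\wedge\theta_2\wedge\theta_3\wedge\beta^{2n}\bigr)(T_1,T_2,T_3,X_1,\dots,X_{4n})$ only the term carrying $T_1,T_2,T_3$ into the $\theta_1\wedge\theta_2\wedge\theta_3$ factor survives, so for any $2$-form $\beta$
\[
\bigl(\theta_1\wedge\theta_2\wedge\theta_3\wedge\beta^{2n}\bigr)(T_1,T_2,T_3,X_1,\dots,X_{4n})=\det\bigl(\theta_i(T_j)\bigr)\,\beta^{2n}(X_1,\dots,X_{4n}).
\]
Applying this with $\beta=d\theta_s$, using $\det(\theta_i(T_j))=1$ and the compatibility identity~(\ref{2,1}), $d\theta_s(X,Y)=g(I_sX,Y)$ for $X,Y\in H$, gives
\[
dV_{g,\mathbb Q}(T_1,T_2,T_3,X_1,\dots,X_{4n})=\omega_{I_s}^{2n}(X_1,\dots,X_{4n}),\qquad \omega_I:=g(I\,\cdot\,,\cdot)|_H .
\]
Running the same computation for $\Theta'$ turns $\det(\theta_i(T_j))$ into $\det\Psi=1$ and $d\theta_s|_H$ into $\omega_{I'_s}$, so the corresponding $(4n{+}3)$-form for $\Theta'$, evaluated on the same frame, equals $\omega_{I'_s}^{2n}(X_1,\dots,X_{4n})$. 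Since a top-degree form on $M$ is determined by its value on one frame, the proposition reduces to the claim that the $4n$-form $\omega_I^{2n}$ on $H$ is one and the same for every $I\in\mathbb Q$.

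To prove this claim I would argue pointwise. Fix $p\in M$ and $I\in\mathbb Q$. As $I$ is a $g$-isometry of $H_p$ with $I^2=-{\rm id}$, a Gram--Schmidt process produces a $g$-orthonormal basis $e_1,Ie_1,\dots,e_{2n},Ie_{2n}$ of $H_p$; in terms of its dual covectors $\epsilon^1,\dots,\epsilon^{4n}$ one finds $\omega_I=\sum_{i=1}^{2n}\epsilon^{2i-1}\wedge\epsilon^{2i}$, whence $\omega_I^{2n}=(2n)!\,\epsilon^1\wedge\cdots\wedge\epsilon^{4n}$, i.e. $(2n)!$ times the metric volume form of $(H_p,g)$ for the orientation in which $(e_1,Ie_1,\dots,e_{2n},Ie_{2n})$ is positive. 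Since $\mathbb Q$ is a connected $2$-sphere, along any path $I_t$ in $\mathbb Q$ the form $\omega_{I_t}^{2n}$ depends continuously on $t$ while the underlying orientation of $H_p$ is locally constant, so the orientation induced by $I$ does not depend on $I\in\mathbb Q$; hence $\omega_I^{2n}$ is literally the same $4n$-form for all $I\in\mathbb Q$. In particular $\omega_{I_1}^{2n}=\omega_{I_2}^{2n}=\omega_{I_3}^{2n}$ and $\omega_{I'_s}^{2n}=\omega_{I_s}^{2n}$, which together with the previous step shows that $dV_{g,\mathbb Q}$ in~(\ref{320}) is independent of $s$ and of the choice of $\Theta$.

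The only non-formal point — and the one I expect to be the main obstacle — is exactly this orientation bookkeeping: one must know that all complex structures in $\mathbb Q$ induce the same orientation on $H$, so that the top powers $\omega_I^{2n}$ agree exactly and not merely up to sign. A more computational alternative avoids it entirely by writing the $\omega_{I_s}$ in the adapted frame $X_{4l+\alpha+1}=\sqrt2\,I_\alpha e_{l+1}$ and verifying $\omega_{I_1}^{2n}=\omega_{I_2}^{2n}=\omega_{I_3}^{2n}$ directly from the quaternion relations~(\ref{2.00}), but the connectedness argument above is shorter.
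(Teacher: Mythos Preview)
Your argument is correct and takes a genuinely different route from the paper's. The paper works in the explicit adapted frame $X_{4l+\alpha+1}=\sqrt2\,I_\alpha e_{l+1}$ and, using the structure equation $d\theta_s\equiv -\sum b^s_{kj}\theta^{4l+k}\wedge\theta^{4l+j}\pmod{\theta_1,\theta_2,\theta_3}$ together with the concrete $4\times 4$ matrices $b^s$ from~(\ref{2.14}), computes $(d\theta_s)^{2n}\equiv 2^{2n}(2n)!\,\theta^1\wedge\cdots\wedge\theta^{4n}$ for each $s$; the change of $\Theta$ is then handled by writing the new adapted coframe in terms of the old and using $\det(c_{ij})=1$. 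Your proof instead reduces everything to the single claim that $\omega_I^{2n}$ on $H$ is independent of $I\in\mathbb Q$, which you settle by the continuity/connectedness argument on $\mathbb Q\cong S^2$. Your approach is cleaner and coordinate-free, and it makes transparent that the only input is that $\mathbb Q$ is a \emph{connected} family of $g$-compatible almost complex structures; the paper's computation, on the other hand, yields the explicit normalising constant $2^{2n}(2n)!$ and verifies the quaternionic identities directly. One small remark: you do not actually need $V$ to be the canonical Biquard complement---any local complement to $H$ suffices for your frame evaluation, since you are comparing two top-degree forms and need only check equality on a single frame.
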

\begin{proof}
Let  $1$-forms $\{\theta^j\}_{j=1}^{4n}$ be  the basis dual to $\{X_{j}\}_{j=1}^{4n}.$   Recall the structure equation (3.6) in \cite{wang2}, i.e.
\begin{align*}
d\theta_{s}=-\sum_{l=0}^{n-1}\sum_{j,k=1}^{4}b_{kj}^{s}\theta^{4l+k}
\wedge\theta^{4l+j},\quad {\rm mod}\  \theta_{1},\theta_{2},\theta_{3},  \end{align*}
$s=1,2,3,$ where $b^{s}$ is  given by (\ref{2.14}).
By using $b^{1}$ in (\ref{2.14}), it is direct to check that
\begin{align*}
(d{\theta}_{1})^{2n}&=\left(\sum_{l=0}^{n-1}2\theta^{4l+1}\wedge\theta^{4l+2}
+2\theta^{4l+3}\wedge\theta^{4l+4}\right)^{2n}
 =2^{2n}(2n)!\theta^{1}\wedge\cdots\wedge\theta^{4n},\quad {\rm mod}\  \theta_{1},\theta_{2},\theta_{3}.
\end{align*}
Similarly,
\begin{equation}
\begin{aligned}\label{3.7}
(d{\theta}_{s})^{2n}=
2^{2n}(2n)!\theta^{1}\wedge\cdots\wedge\theta^{4n},\quad {\rm mod}\  \theta_{1},\theta_{2},\theta_{3},
\end{aligned}
\end{equation}
$s=2,3,$ and so $dV_{g,\mathbb{Q}}$ is independent of $s$.
Let
$\tilde{\Theta}=(\tilde{\theta}_{1},\tilde{\theta}_{2},\tilde{\theta}_{3})$ be another contact form satisfying $d\tilde{\theta}_{s}(X,Y)=g(I_{s}X,Y).$ We
can write $\tilde{\theta}_{s}=\sum_{j=1}^{3}c_{sj}\theta_{j},s=1,2,3,$ for some ${\rm SO}(3)$-valued function $(c_{ij})$ by Lemma \ref{l2.1} and
simultaneously $\tilde{I}_{s}=\sum_{j=1}^{3}c_{sj}I_{j},s=1,2,3,$ by the proof of Lemma \ref{l2.1}. As the procedure at the beginning of this section, we can choose
orthonormal basis $\{\cdots,e_{k},\tilde{I}_{1}e_{k},\tilde{I}_{2}e_{k},
\tilde{I}_{3}e_{k},\cdots\}.$ Then we have
\begin{align*}
\tilde{X}_{4l+1}=X_{4l+1},\quad \tilde{X}_{4l+\alpha+1}=\sqrt{2}\tilde{I}_{\alpha}e_{l+1}=\sum_{\beta=1}^{3} c_{\alpha\beta}X_{4l+\beta+1}
\end{align*}
for $l=0,\cdots,n-1,\ \alpha=1,2,3.$
Dually, we have the dual basis $\{\tilde{\theta}^{i}\}$ such that
\begin{align*}
\tilde{\theta}^{4l+1}=\theta^{4l+1},\quad
\tilde{\theta}^{4l+\alpha+1}=\sum_{\beta=1}^{3}c_{\alpha\beta}\theta^{4l+\beta+1},
\end{align*}
for $l=0,\cdots,n-1,\ \alpha=1,2,3.$ In fact, we have $\tilde{\theta}^{4l+1}\wedge\cdots\wedge\tilde{\theta}^{4l+4}=\det(c_{ij})
{\theta}^{4l+1}\wedge\cdots\wedge{\theta}^{4l+4}$ and $\det(c_{ij})=1$ by $(c_{ij})\in {\rm SO}(3).$ By (\ref{3.7}), we have
\begin{equation}\label{3.77}
\begin{aligned}
dV_{\tilde{g},\mathbb{Q}}&=\tilde{\theta}_{1}\wedge\tilde{\theta}_{2}
\wedge\tilde{\theta}_{3}\wedge\left(d\tilde{\theta}_{s}
\right)^{2n}=2^{2n}(2n)!\det(c_{ij})\theta_{1}\wedge\theta_{2}\wedge
\theta_{3}\wedge\tilde{\theta}^{1}\wedge\cdots\wedge\tilde{\theta}^{4n}   \\&=2^{2n}(2n)!{\theta}_{1}\wedge{\theta}_{2}\wedge
{\theta}_{3}\wedge\theta^{1}\wedge\cdots\wedge\theta^{4n}=
{\theta}_{1}\wedge{\theta}_{2}\wedge
{\theta}_{3}\wedge(d{\theta}_{s})^{2n}.
\end{aligned}
\end{equation}
  The proposition is proved.
\end{proof}
Denote $d_{b}:={\rm pr}\circ d,$ where ${\rm pr}$ is the projection from $T^{*}M$ to $H^{*}$. We define the \emph{SubLaplacian} $\Delta_{g,\mathbb{Q}}$
associated to the qc contact structure $(M,g,\mathbb{Q})$ by
\begin{align}\label{3.9}
\int_{M}\Delta_{g,\mathbb{Q}}u\cdot vdV_{g,\mathbb{Q}}=
\int_{M}\langle d_{b}u,d_{b}v\rangle_{g }dV_{g,\mathbb{Q}}
\end{align}
for $u,v\in C_{0}^{\infty}(M).$ The SubLaplacian $\Delta_{g,\mathbb{Q}}$ has the following expression.
\begin{prop}\label{2.4}{\rm(cf. p.365 in \cite{wang2})}
For $u\in C^{\infty}(M)$, we have
\begin{align}\label{231}
\Delta_{g,\mathbb{Q}}u=\frac{1}{2}\sum_{j=1}^{4n}\left(-X_{j}X_{j}u+\sum_{k=1}
^{4n}\Gamma_{kk}^{j}X_{j}u\right).
\end{align}
\end{prop}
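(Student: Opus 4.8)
The plan is to evaluate both sides of the defining relation (\ref{3.9}) in the adapted local frame $\{X_j\}_{j=1}^{4n}$ and then integrate by parts. Let $\{\theta^j\}_{j=1}^{4n}$ be the coframe of $H^*$ dual to $\{X_j\}$, so that $d_bu=\sum_j(X_ju)\theta^j$. Since $g(X_j,X_k)=2\delta_{jk}$, the induced dual metric on $H^*$ satisfies $\langle\theta^j,\theta^k\rangle_g=\tfrac12\delta_{jk}$, hence
\begin{align*}
\langle d_bu,d_bv\rangle_g=\tfrac12\sum_{j=1}^{4n}(X_ju)(X_jv).
\end{align*}
Now apply the Leibniz rule $X_j\big((X_ju)v\big)=(X_jX_ju)v+(X_ju)(X_jv)$ together with the divergence identity $\int_M X_j(f)\,dV_{g,\mathbb{Q}}=-\int_M f\,({\rm div}\,X_j)\,dV_{g,\mathbb{Q}}$ for $f\in C_0^\infty(M)$, where ${\rm div}\,X_j$ is defined by $d(\iota_{X_j}dV_{g,\mathbb{Q}})=({\rm div}\,X_j)\,dV_{g,\mathbb{Q}}$; this identity is merely Stokes' theorem, as $M$ has no boundary. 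Taking $f=(X_ju)v$ turns (\ref{3.9}) into
\begin{align*}
\int_M\Delta_{g,\mathbb{Q}}u\cdot v\,dV_{g,\mathbb{Q}}=\int_M\Big(\tfrac12\sum_{j=1}^{4n}\big(-X_jX_ju-({\rm div}\,X_j)\,X_ju\big)\Big)v\,dV_{g,\mathbb{Q}},
\end{align*}
and since $v\in C_0^\infty(M)$ is arbitrary, it remains only to show ${\rm div}\,X_j=-\sum_{k=1}^{4n}\Gamma_{kk}^j$.

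For this I would first note that $dV_{g,\mathbb{Q}}$ is parallel for the Biquard connection $\nabla$: by Proposition \ref{234} we may write $dV_{g,\mathbb{Q}}=2^{2n}(2n)!\,\theta_1\wedge\theta_2\wedge\theta_3\wedge\theta^1\wedge\cdots\wedge\theta^{4n}$, and since $\nabla$ preserves the metric on $H$ its connection form in the frame $\{X_j\}$ is $\mathfrak{so}$-valued, hence trace-free, so $\theta^1\wedge\cdots\wedge\theta^{4n}$ is parallel, while the connection on $V$, being induced by the identification of $V$ with $\mathfrak{sp}_1\cong\mathfrak{so}(3)$, is also trace-free, so $\theta_1\wedge\theta_2\wedge\theta_3$ is parallel. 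For a $\nabla$-parallel volume form one has ${\rm div}\,Y={\rm tr}(X\mapsto\nabla_XY)+{\rm tr}(X\mapsto T(Y,X))$ for every vector field $Y$. Taking $Y=X_j$: because $\nabla$ preserves $H$, $\nabla_{X_k}X_j=\Gamma_{kj}^{k'}X_{k'}$ is horizontal, so ${\rm tr}(\nabla X_j)=\sum_k\Gamma_{kj}^k$; and the torsion trace vanishes because condition (ii) of the Biquard connection, $T(X,Y)\in V$ for $X,Y\in H$, kills the horizontal part, while the structure of the Biquard torsion on $V\times H$ kills the vertical part. Hence ${\rm div}\,X_j=\sum_k\Gamma_{kj}^k$. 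Finally, differentiating $g(X_j,X_l)=2\delta_{jl}$ and using that $\nabla$ is metric gives $\Gamma_{kj}^l+\Gamma_{kl}^j=0$; with $l=k$ and summation over $k$ this yields $\sum_k\Gamma_{kj}^k=-\sum_k\Gamma_{kk}^j$, and substituting ${\rm div}\,X_j=-\sum_k\Gamma_{kk}^j$ into the displayed identity above produces exactly (\ref{231}).

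I expect the delicate point to be the identity ${\rm div}\,X_j=\sum_k\Gamma_{kj}^k$: because $dV_{g,\mathbb{Q}}$ mixes the horizontal coframe $\theta^1,\ldots,\theta^{4n}$ with the vertical forms $\theta_1,\theta_2,\theta_3$, one must verify parallelism of the volume form on both factors and, more subtly, check that the torsion of the Biquard connection contributes nothing to the divergence of a horizontal vector field — this is where conditions (ii) and (iv) of the Biquard connection, together with the fact that $T(R,\cdot)$ maps $H$ into $H$ for $R\in V$, are used. An alternative, more computational route bypasses the abstract divergence formula: one expands $d(\iota_{X_j}dV_{g,\mathbb{Q}})$ directly using the structure equations for $d\theta_s$ from the proof of Proposition \ref{234}, at the cost of a longer calculation; this is essentially the computation in \cite{wang2}. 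Everything else in the argument is routine bookkeeping with the frame.
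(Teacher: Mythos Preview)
Your argument is correct. Note, however, that the paper itself does not prove this proposition: it simply states the formula and refers to p.~365 of \cite{wang2}, where the result is obtained by the direct expansion of $d(\iota_{X_j}dV_{g,\mathbb{Q}})$ via the structure equations --- precisely the ``alternative, more computational route'' you mention at the end. Your approach through parallelism of $dV_{g,\mathbb{Q}}$ and the divergence identity $\mathrm{div}\,Y=\mathrm{tr}(\nabla Y)+\mathrm{tr}\,T(Y,\cdot)$ is more conceptual and cleanly isolates what needs to be checked. The point you correctly flag as delicate --- that $T(R,X)$ has no $V$-component for $R\in V$ and $X\in H$, so that the torsion trace vanishes --- is indeed a standard property of the Biquard connection (see e.g.\ \cite{biquard} or \cite{Ivanov3}); once that is granted, the rest of your computation goes through exactly as written.
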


The transformation law of the scalar curvature under the conformal changes was given by Biquard in \cite{biquard} for dim$M>7.$ When dim$M=7$, it was
given by Duchemin in \cite{David}.
\begin{thm}\label{1.5}
Under the conformal change $\tilde{g}=f^{2}g$,  the scalar curvature becomes
\begin{align*}
s_{\tilde{g},\mathbb{Q}}=f^{-2}\left(s_{g,\mathbb{Q}}
-8(n+2)Tr^{H}\nabla\alpha-16(n+1)(n+2)|\alpha|^{2}\right)
\end{align*}
where $\alpha=f^{-1}df$ and $\nabla$ is the Biquard connection.
\end{thm}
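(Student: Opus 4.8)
The plan is to follow Biquard's route: translate the conformal change of the metric into a conformal change of the $\mathbb{R}^{3}$-valued contact form, compute how the Biquard connection changes, propagate this to the curvature, and finally contract to isolate the scalar curvature. Throughout I would work in the $g$-orthonormal horizontal frame $\{X_{j}\}_{j=1}^{4n}$ with $g(X_{j},X_{k})=2\delta_{jk}$ and dual coframe $\{\theta^{j}\}$ constructed at the start of Section 3; for $\dim M=7$ one replaces Biquard's connection theorem by Duchemin's \cite{David}, the rest of the argument being identical.

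\emph{Setup.} Write $\tilde{g}=f^{2}g$ and $\alpha=f^{-1}df$. One may take the associated contact form to be $\tilde{\Theta}=f^{2}\Theta$, up to an irrelevant ${\rm SO}(3)$-rotation by Lemma \ref{l2.1}, since $d(f^{2}\theta_{s})=f^{2}d\theta_{s}$ on $H$, so the compatibility condition (\ref{2,1}) persists. Then $\tilde{X}_{j}=f^{-1}X_{j}$ is $\tilde{g}$-orthonormal with dual $\tilde{\theta}^{j}=f\theta^{j}$. The quaternionic bundle $\mathbb{Q}$ is unchanged, so $\nabla$ and $\tilde{\nabla}$ are two connections on $H$ both preserving $\mathbb{Q}$, each preserving its own metric, each with the Biquard torsion normalization.

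\emph{The difference tensor (the main obstacle).} Set $\tilde{\nabla}=\nabla+\mathcal{A}$ with $\mathcal{A}$ a horizontal $1$-form valued in ${\rm End}(H)$. Metric compatibility of both connections together with $\tilde{g}=f^{2}g$ forces
\begin{align*}
g(\mathcal{A}(X)Y,Z)+g(Y,\mathcal{A}(X)Z)=2\alpha(X)g(Y,Z),
\end{align*}
so the $g$-symmetric part of $\mathcal{A}(X)$ is $\alpha(X)\,{\rm id}_{H}$. Preservation of the ${\rm Sp}(n){\rm Sp}(1)$-structure restricts $\mathcal{A}(X)$ to normalize $\mathfrak{sp}_{n}\oplus\mathfrak{sp}_{1}$, and the torsion normalization (ii)--(iv) of Biquard's theorem — together with (v), which also records how the supplementary subspace $V$ rescales — pins down the $g$-skew part. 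The outcome is an expression linear in $\alpha$, parallel to the analogous difference tensor in the pseudohermitian (CR) case but with each of $I_{1},I_{2},I_{3}$ contributing a symmetric cross-term, schematically
\begin{align*}
\mathcal{A}(X)Y=\alpha(X)Y+\alpha(Y)X-g(X,Y)\alpha^{\sharp}+\sum_{s=1}^{3}\big(\alpha(I_{s}X)I_{s}Y+\alpha(I_{s}Y)I_{s}X-g(I_{s}X,Y)I_{s}\alpha^{\sharp}\big),
\end{align*}
$\alpha^{\sharp}$ being the $g$-dual of $\alpha$. Deriving this formula — in particular tracking the change of $V$ and getting every quaternionic cross-term and sign right — is the hard part; once it is available the remaining steps are routine.

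\emph{Curvature and contraction.} Insert $\tilde{\nabla}=\nabla+\mathcal{A}$ into the Biquard curvature. The difference $\tilde{R}-R$ splits into a part linear in $\nabla\mathcal{A}$ and a part quadratic in $\mathcal{A}$, plus torsion corrections built from $T_{X,Y}=-[X,Y]_{V}$ which are harmless on contraction. Now contract over the frame $\{X_{j}\}$ in the way that defines $s_{g,\mathbb{Q}}$ as in \cite{wang2}, noting that raising an index uses $\tilde{g}^{-1}=f^{-2}g^{-1}$, which produces the overall factor $f^{-2}$. The $\nabla\mathcal{A}$-terms collapse to a multiple of $Tr^{H}\nabla\alpha=\sum_{j}(\nabla_{X_{j}}\alpha)(X_{j})$, and pairing the $4n$ frame directions with the $\sum_{s=1}^{3}$ cross-terms yields the coefficient $-8(n+2)$; the $\mathcal{A}$-quadratic terms collapse to a multiple of $|\alpha|^{2}$ with coefficient $-16(n+1)(n+2)$ from the same combinatorics. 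This reproduces the stated identity, matching Biquard \cite{biquard} for $\dim M>7$ and Duchemin \cite{David} for $\dim M=7$. As a cross-check one can fix the two universal constants independently: the difference manifestly depends only on the $2$-jet of $f$, is natural and scalar of conformal weight $-2$, hence must have the displayed form, and the constants are then determined by testing on explicit conformal changes of the flat qc structure on $\mathscr{H}^{n}$.
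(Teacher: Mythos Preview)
The paper does not supply its own proof of this theorem: it is stated as a result quoted from Biquard \cite{biquard} (for $\dim M>7$) and Duchemin \cite{David} (for $\dim M=7$), and is then used as a black box to derive Corollaries \ref{2.32} and \ref{1.6}. There is therefore no ``paper's own proof'' to compare your proposal against.

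Your sketch is a reasonable outline of the argument actually carried out in \cite{biquard}: compute the difference tensor $\mathcal{A}=\tilde{\nabla}-\nabla$ by imposing metric compatibility, preservation of the ${\rm Sp}(n){\rm Sp}(1)$-structure, and the torsion normalizations, then propagate to curvature and contract. You are right that the honest content lies in the explicit form of $\mathcal{A}$ and in tracking the change of the vertical complement $V$; the schematic formula you write is of the correct shape, though in a full proof each coefficient and the $\mathfrak{sp}_{1}$-part must be pinned down rather than asserted. Your naturality cross-check at the end is a sound way to confirm the two constants once the structure of the answer is known. For the purposes of this paper, however, citing \cite{biquard} and \cite{David} is what the authors do, and nothing further is required.
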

Writing the conformal factor $f$ as $e^{h}$, we can write the transformation law in the following form.
\begin{cor}\label{2.32}
The scalar curvature $s_{\tilde{g},\mathbb{Q}}$ associated with the structure
$\tilde{g}=e^{2h}g$ satisfies
\begin{align*}
s_{{\tilde{g},\mathbb{Q}}}=e^{-2h}\left(s_{g,\mathbb{Q}}+ 2(Q+2)\Delta_{g,\mathbb{Q}}h
-\sum_{j=1}^{4n}\frac{(Q+2)(Q-2)}{2}(X_{j}h)^{2}\right)
\end{align*}
\end{cor}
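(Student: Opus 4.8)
The plan is to derive Corollary \ref{2.32} from Theorem \ref{1.5} by the straightforward substitution $f = e^{h}$ and then rewriting the resulting first-order term using the formula for $\Delta_{g,\mathbb{Q}}$ in Proposition \ref{2.4}. First I would set $f = e^{h}$, so that $\alpha = f^{-1}df = dh$, and hence $\alpha|_{H} = d_{b}h$ with $|\alpha|^{2} = \langle d_{b}h, d_{b}h\rangle_{g} = \frac{1}{2}\sum_{j=1}^{4n}(X_{j}h)^{2}$, where the factor $\frac{1}{2}$ comes from the normalization $g(X_{j},X_{k}) = 2\delta_{jk}$ fixed at the beginning of the section (so the dual metric on $H^{*}$ gives $\langle\theta^{j},\theta^{k}\rangle_{g} = \frac{1}{2}\delta_{jk}$). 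Substituting into Theorem \ref{1.5} with $\tilde g = f^{2}g = e^{2h}g$ gives
\begin{align*}
s_{\tilde g,\mathbb{Q}} = e^{-2h}\left(s_{g,\mathbb{Q}} - 8(n+2)\,\mathrm{Tr}^{H}\nabla(dh) - 8(n+1)(n+2)\sum_{j=1}^{4n}(X_{j}h)^{2}\right).
\end{align*}

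Next I would identify the two remaining terms with the expressions claimed in the corollary. For the trace term: $\mathrm{Tr}^{H}\nabla(dh)$ is, by definition, the horizontal trace of the second covariant derivative of $h$ with respect to the Biquard connection, i.e. $\mathrm{Tr}^{H}\nabla(dh) = \frac{1}{2}\sum_{j=1}^{4n}(\nabla_{X_{j}}dh)(X_{j})$ (again the $\frac{1}{2}$ from $g(X_{j},X_{j})=2$). Using $(\nabla_{X_{j}}dh)(X_{j}) = X_{j}(X_{j}h) - (dh)(\nabla_{X_{j}}X_{j}) = X_{j}X_{j}h - \sum_{k}\Gamma_{jj}^{k}X_{k}h$, and comparing with the formula $\Delta_{g,\mathbb{Q}}u = \frac{1}{2}\sum_{j}(-X_{j}X_{j}u + \sum_{k}\Gamma_{kk}^{j}X_{j}u)$ from Proposition \ref{2.4}, one sees that $\mathrm{Tr}^{H}\nabla(dh) = -\Delta_{g,\mathbb{Q}}h$ (after relabeling the dummy indices $j \leftrightarrow k$ in the connection term). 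Hence $-8(n+2)\mathrm{Tr}^{H}\nabla(dh) = 8(n+2)\Delta_{g,\mathbb{Q}}h = 2(Q+2)\Delta_{g,\mathbb{Q}}h$, since $Q = 4n+6$ gives $Q+2 = 4n+8 = 4(n+2)$.

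Finally, for the quadratic term I would check the numerical coefficient: $8(n+1)(n+2) = \frac{1}{2}\cdot 16(n+1)(n+2) = \frac{1}{2}(Q-2)(Q+2)/4 \cdot 4$; more directly, $Q+2 = 4(n+2)$ and $Q-2 = 4(n+1)$, so $\frac{(Q+2)(Q-2)}{2} = \frac{16(n+1)(n+2)}{2} = 8(n+1)(n+2)$, matching exactly. Collecting everything yields the stated formula. The only genuinely delicate point — the ``main obstacle'' — is pinning down the normalization conventions so that all the factors of $2$ are accounted for consistently: the relation $g(X_{j},X_{k}) = 2\delta_{jk}$ affects the dual metric (hence $|\alpha|^{2}$), the definition of $\mathrm{Tr}^{H}$, and the factor $\frac{1}{2}$ in Proposition \ref{2.4}, and one must track these carefully to see that $8(n+2)\mathrm{Tr}^{H}\nabla(dh)$ in Biquard's normalization equals $2(Q+2)\Delta_{g,\mathbb{Q}}h$ with the SubLaplacian as normalized here. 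Everything else is elementary algebra in $n$.
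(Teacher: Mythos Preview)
Your proposal is correct and follows essentially the same approach as the paper: set $f=e^{h}$ so that $\alpha=dh$, identify $\mathrm{Tr}^{H}\nabla(dh)$ with $-\Delta_{g,\mathbb{Q}}h$ via Proposition~\ref{2.4}, and match the numerical coefficients using $Q=4n+6$. You are simply more explicit than the paper about the $|\alpha|^{2}$ term and the factor-of-two bookkeeping coming from $g(X_{j},X_{k})=2\delta_{jk}$.
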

\begin{proof}
Since $f=e^{h}$, $\alpha=dh$. Then,
\begin{equation*}
\begin{aligned}
Tr^{H}\nabla\alpha&=\sum_{j=1}^{4n}(\nabla_{{X_{j}}/{\sqrt{2}}}\alpha)
({X_{j}}/{\sqrt{2}})
=\frac{1}{2}\sum_{j}^{4n}(\nabla_{X_{j}}dh)(X_{j})
\\&=\frac{1}{2}\sum_{j}^{4n}\left(X_{j}(dh(X_{j}))-
dh(\nabla_{X_{j}}X_{j})\right)=\frac{1}{2}\sum_{j=1}^{4n}\left(X_{j}X_{j}h-\sum_{k=1}
^{4n}\Gamma_{kk}^{j}X_{j}h\right)=-\Delta_{g,\mathbb{Q}}h.
\end{aligned}
\end{equation*}
The result follows.
\end{proof}
\begin{cor}{\rm(cf. p. 360 in \cite{wang2})}\label{1.6}
The scalar curvature $s_{\tilde{g},\mathbb{Q}}$ associated with the structure $(M,\tilde{g},\mathbb{Q})$ with
$\tilde{g}=\phi^{\frac{4}{Q-2}}g$ satisfies qc Yamabe equation:

\begin{align}\label{122}
b_{n}\Delta_{g,\mathbb{Q}}\phi+s_{g,\mathbb{Q}}\phi=
s_{\tilde{g},\mathbb{Q}}\phi^{\frac{Q+2}{Q-2}},\quad b_{n}=4\frac{Q+2}{Q-2}.
\end{align}
\end{cor}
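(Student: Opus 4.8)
The plan is to deduce (\ref{122}) from the exponential form of the conformal transformation law already recorded in Corollary \ref{2.32}. Since $\tilde{g}=\phi^{\frac{4}{Q-2}}g$, I would write $\phi^{\frac{4}{Q-2}}=e^{2h}$ with $h:=\frac{2}{Q-2}\log\phi$; then Corollary \ref{2.32} applies verbatim, and everything reduces to re-expressing its right-hand side in terms of $\phi$ instead of $h$.

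First I would record the horizontal derivatives of $h$. We have $X_jh=\frac{2}{Q-2}\,\frac{X_j\phi}{\phi}$, hence
\[
\sum_{j=1}^{4n}(X_jh)^2=\frac{4}{(Q-2)^2}\,\frac{1}{\phi^2}\sum_{j=1}^{4n}(X_j\phi)^2 .
\]
For the SubLaplacian I would use the formula (\ref{231}) of Proposition \ref{2.4} together with the chain rule $X_jX_j(\log\phi)=\frac{X_jX_j\phi}{\phi}-\frac{(X_j\phi)^2}{\phi^2}$, which yields
\[
\Delta_{g,\mathbb{Q}}(\log\phi)=\frac{\Delta_{g,\mathbb{Q}}\phi}{\phi}+\frac{1}{2\phi^2}\sum_{j=1}^{4n}(X_j\phi)^2,
\qquad\text{so}\qquad
\Delta_{g,\mathbb{Q}}h=\frac{2}{Q-2}\!\left(\frac{\Delta_{g,\mathbb{Q}}\phi}{\phi}+\frac{1}{2\phi^2}\sum_{j=1}^{4n}(X_j\phi)^2\right).
\]

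Then I would substitute these into Corollary \ref{2.32}, using $e^{-2h}=\phi^{-\frac{4}{Q-2}}$. The only point that actually needs checking is the cancellation of the two terms carrying $\sum_j(X_j\phi)^2$: the contribution of $2(Q+2)\Delta_{g,\mathbb{Q}}h$ to this term has coefficient $\frac{2(Q+2)}{Q-2}$, and the contribution of $-\frac{(Q+2)(Q-2)}{2}\sum_j(X_jh)^2$ has coefficient $-\frac{2(Q+2)}{Q-2}$, so they annihilate each other --- this is exactly the feature that singles out the conformal weight $\frac{4}{Q-2}$. What survives is
\[
s_{\tilde{g},\mathbb{Q}}=\phi^{-\frac{4}{Q-2}}\left(s_{g,\mathbb{Q}}+\frac{4(Q+2)}{Q-2}\,\frac{\Delta_{g,\mathbb{Q}}\phi}{\phi}\right)=\phi^{-\frac{Q+2}{Q-2}}\bigl(b_n\Delta_{g,\mathbb{Q}}\phi+s_{g,\mathbb{Q}}\phi\bigr),
\]
using $\frac{4}{Q-2}+1=\frac{Q+2}{Q-2}$ and $b_n=\frac{4(Q+2)}{Q-2}$; multiplying through by $\phi^{\frac{Q+2}{Q-2}}$ gives (\ref{122}). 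There is no genuine obstacle: the argument is a direct computation, and the only place demanding a little care is the second-order term $-\frac{(X_j\phi)^2}{\phi^2}$ in the logarithmic chain rule for the SubLaplacian, since it is precisely what produces the gradient terms that then cancel.
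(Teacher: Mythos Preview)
Your argument is correct and is precisely the intended derivation: the paper does not spell out a proof of this corollary but presents it as an immediate consequence of Corollary~\ref{2.32}, which is exactly the substitution $h=\frac{2}{Q-2}\log\phi$ you carry out. The cancellation of the gradient terms you highlight is the key point, and your handling of the SubLaplacian via (\ref{231}) is accurate.
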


Now let us derive the transformation law of the qc Yamabe operator. See \cite{Orsted} for such derivation in the pseudoRiemannian case and \cite{Wang3} in
  the CR case.
\begin{prop}
Let $({M},\tilde{g},\mathbb{Q})$ and $(M,g,\mathbb{Q})$ be two  qc manifolds. Let $\tilde{g}=\phi^{\frac{4}{Q-2}}g$ for some positive smooth function
$\phi$ on ${M}$. Then
\begin{align}\label{123}
  \Delta_{g,\mathbb{Q}}(\phi\cdot f)=\Delta_{g,\mathbb{Q}}\phi\cdot f+{\phi^{\frac{Q+2}{Q-2}}}\Delta_{\tilde{g},\mathbb{Q}}f
\end{align}
for any smooth function $f$ on $M$.
\end{prop}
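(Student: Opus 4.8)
The plan is to reduce the claimed identity to the conformal transformation law for the scalar curvature, which is already available as Corollary \ref{1.6}. The point is that the qc Yamabe equation (\ref{122}) is exactly the statement we want, once we observe that it holds not only for the fixed conformal factor $\phi$ but for the product $\phi\cdot f$, with the roles of the background and target metrics interpreted appropriately. More precisely, I would apply (\ref{122}) twice: once with conformal factor $\phi$ to relate $s_{\tilde g,\mathbb{Q}}$ to $\Delta_{g,\mathbb{Q}}\phi$ and $s_{g,\mathbb{Q}}$, and once with conformal factor $f$ passing from $\tilde g$ to the metric $(\phi f)^{\frac{4}{Q-2}}g = f^{\frac{4}{Q-2}}\tilde g$, relating $s_{f^{4/(Q-2)}\tilde g,\mathbb{Q}}$ to $\Delta_{\tilde g,\mathbb{Q}}f$ and $s_{\tilde g,\mathbb{Q}}$.

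Concretely, write $h = \phi f$, so that $h^{\frac{4}{Q-2}}g = f^{\frac{4}{Q-2}}\tilde g$. Corollary \ref{1.6} applied with background $g$ and factor $h$ gives
\begin{align*}
b_n\Delta_{g,\mathbb{Q}}(\phi f) + s_{g,\mathbb{Q}}(\phi f) = s_{h^{4/(Q-2)}g,\mathbb{Q}}(\phi f)^{\frac{Q+2}{Q-2}},
\end{align*}
while Corollary \ref{1.6} applied with background $\tilde g$ and factor $f$ gives
\begin{align*}
b_n\Delta_{\tilde g,\mathbb{Q}}f + s_{\tilde g,\mathbb{Q}}f = s_{f^{4/(Q-2)}\tilde g,\mathbb{Q}}f^{\frac{Q+2}{Q-2}},
\end{align*}
and Corollary \ref{1.6} applied with background $g$ and factor $\phi$ gives
\begin{align*}
b_n\Delta_{g,\mathbb{Q}}\phi + s_{g,\mathbb{Q}}\phi = s_{\tilde g,\mathbb{Q}}\phi^{\frac{Q+2}{Q-2}}.
\end{align*}
Since $h^{\frac{4}{Q-2}}g = f^{\frac{4}{Q-2}}\tilde g$ as metrics, their scalar curvatures agree: $s_{h^{4/(Q-2)}g,\mathbb{Q}} = s_{f^{4/(Q-2)}\tilde g,\mathbb{Q}}$. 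I would then eliminate this common scalar curvature between the first two displays, substitute $s_{\tilde g,\mathbb{Q}}$ from the third display, and collect terms. The $b_n$ factors cancel throughout, and the terms involving $s_{g,\mathbb{Q}}$ cancel after using $\phi^{\frac{Q+2}{Q-2}}$ as the appropriate power; what survives is precisely $\Delta_{g,\mathbb{Q}}(\phi f) = \Delta_{g,\mathbb{Q}}\phi\cdot f + \phi^{\frac{Q+2}{Q-2}}\Delta_{\tilde g,\mathbb{Q}}f$.

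The main obstacle is purely bookkeeping: one must be careful that the exponents match up, i.e. that multiplying the $f$-equation (with background $\tilde g$) by $\phi^{\frac{Q+2}{Q-2}}$ converts $f^{\frac{Q+2}{Q-2}}$ into $(\phi f)^{\frac{Q+2}{Q-2}}$ and $\phi^{\frac{Q+2}{Q-2}}s_{\tilde g,\mathbb{Q}}f = s_{g,\mathbb{Q}}(\phi f) + b_n f\Delta_{g,\mathbb{Q}}\phi$ by the third display. Once the algebra is arranged so that all three equations are expressed with the common term $(\phi f)^{\frac{Q+2}{Q-2}}s_{h^{4/(Q-2)}g,\mathbb{Q}}$ on the right, subtraction is immediate and no curvature quantities remain — only the two SubLaplacian terms. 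I do not anticipate any analytic subtlety, since $\phi$ and $f$ are smooth and positive (we may shrink to where $f>0$ if needed, or note the identity is local and both sides are smooth differential expressions so positivity of $f$ is not actually used in the final formula).
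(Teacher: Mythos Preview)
Your argument is correct, and the algebra goes through exactly as you describe: multiplying the $f$-equation by $\phi^{\frac{Q+2}{Q-2}}$, equating its right-hand side with that of the $h$-equation, and substituting for $\phi^{\frac{Q+2}{Q-2}}s_{\tilde g,\mathbb{Q}}$ from the $\phi$-equation causes all scalar-curvature terms to cancel and leaves precisely the claimed identity after dividing by $b_n$. Your remark about positivity of $f$ is also fine: the identity is linear in $f$, so once it holds for positive smooth functions it holds for all smooth functions (add a large constant locally and subtract).

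However, this is a genuinely different route from the paper's own proof. The paper does \emph{not} pass through the scalar-curvature transformation law at all; instead it works directly from the weak definition (\ref{3.9}) of the SubLaplacian. It computes $\langle \Delta_{g,\mathbb{Q}}(\phi f),h\rangle_{g,\mathbb{Q}}=\langle d_b(\phi f),d_bh\rangle_{g,\mathbb{Q}}$, expands $d_b(\phi f)=f\,d_b\phi+\phi\,d_bf$, and then rewrites the cross-term $\langle d_bf,\phi\,d_bh-h\,d_b\phi\rangle_{g,\mathbb{Q}}$ as $\langle d_bf,d_b(\phi^{-1}h)\rangle_{\tilde g,\mathbb{Q}}$ using the conformal change of the pointwise inner product and of the volume form $dV_{\tilde g,\mathbb{Q}}=\phi^{\frac{2Q}{Q-2}}dV_{g,\mathbb{Q}}$. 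Integration by parts against $\Delta_{\tilde g,\mathbb{Q}}$ then yields the result. The paper's approach is thus more self-contained---it uses nothing but the definition of the SubLaplacian and the elementary conformal scaling of metric and volume---whereas your approach is algebraically cleaner but imports the full strength of Corollary~\ref{1.6}, which in turn rests on Biquard's curvature computation (Theorem~\ref{1.5}). Both are valid; yours is shorter given what is already on the table, while the paper's is the natural ``from first principles'' proof and in particular avoids any concern about the sign of $f$.
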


\begin{proof}
Let $\Theta=(\theta_{1},\theta_{2},\theta_{3})$ be a $\mathbb{R}^{3}$-valued $1$-form associated to $({M},{g},\mathbb{Q})$ and let
$\tilde{\Theta}=(\tilde{\theta}_{1},\tilde{\theta}_{2},\tilde{\theta}_{3})$ be associated to $({M},\tilde{{g}},\mathbb{Q})$.
For any real function $h$ on $M$, we have
\begin{equation}\label{11}
\begin{aligned}
\langle\Delta_{g,\mathbb{Q}}(\phi\cdot f),h\rangle_{g,\mathbb{Q}}
&=\int_{M}\langle d_{b}(\phi\cdot f),d_{b}h\rangle_{g}dV_{g,\mathbb{Q}}
=\langle d_{b}\phi\cdot f+\phi d_{b} f,d_{b}h\rangle_{g,\mathbb{Q}}\\
&=\langle d_{b}\phi,f\cdot d_{b}h\rangle_{g,\mathbb{Q}}+
\langle d_{b} f,\phi\cdot d_{b}h\rangle_{g,\mathbb{Q}}\\
&=\langle d_{b}\phi,d_{b}(f\cdot h)\rangle_{g,\mathbb{Q}}
+\langle d_{b}f,\phi\cdot d_{b}h- h\cdot d_{b}\phi\rangle_{g,\mathbb{Q}}\\
&=\langle \Delta_{g,\mathbb{Q}}\phi,f\cdot h\rangle_{g,\mathbb{Q}}
+\langle d_{b}f,\phi\cdot d_{b}h- h\cdot d_{b}\phi\rangle_{g,\mathbb{Q}}.
\end{aligned}
\end{equation}

Let us calculate the second term in the right side of (\ref{11}). By our assumption and Proposition \ref{234}, we just need to consider
$\tilde{\Theta}=\phi^{\frac{4}{Q-2}}\Theta.$ Then for a fixed $s$, we have
\begin{align}\label{12}
d\tilde{\theta}_{s}=d(\phi^{\frac{4}{Q-2}}\theta_{s})
={\frac{4}{Q-2}}\phi^{\frac{6-Q}{Q-2}}d\phi\wedge\theta_{s}
+\phi^{\frac{4}{Q-2}}d\theta_{s},
\end{align}
i.e. $d\tilde{\theta}_{s}=\phi^{\frac{4}{Q-2}}d\theta_{s}\  {\rm mod}\ \theta_{1},\theta_{2},\theta_{3}.$ So, we get
\begin{align}\label{3.19}
dV_{\tilde{g},\mathbb{Q}}=\tilde{\theta}_{s}\wedge\tilde{\theta}_{2}
\wedge\tilde{\theta}_{3}\wedge
(d\tilde{\theta}_{s})^{2n}=\phi^{\frac{2Q}{Q-2}}
{\theta}_{1}\wedge{\theta}_{2}\wedge{\theta}_{3}\wedge
(d{\theta}_{1})^{2n}=\phi^{\frac{2Q}{Q-2}}dV_{g,\mathbb{Q}}.
\end{align}
Consequently, for $1$-forms $\omega_{1},\omega_{2}\in \Gamma(H^{*}),$ we have
\begin{equation*}
\begin{aligned}
\langle \omega_{1},\omega_{2}\rangle_{g,\mathbb{Q}}=
\int_{M} \langle \omega_{1},\omega_{2}\rangle_{g}dV_{g,\mathbb{Q}}
=\langle\phi^{-2}\omega_{1},\omega_{2}\rangle_{\tilde{g},\mathbb{Q}}.
\end{aligned}
\end{equation*}
Now we find that
\begin{equation*}
\begin{aligned}
\langle d_{b}f,\phi \cdot d_{b} h- h\cdot d_{b}\phi\rangle_{g,\mathbb{Q}}
&=\langle\phi^{-2} d_{b}f,\phi\cdot d_{b} h- h\cdot d_{b}\phi\rangle_{\tilde{g},\mathbb{Q}}
=\langle d_{b}f,d_{b}(\phi^{-1}h)\rangle_{\tilde{g},\mathbb{Q}}\\
&=\int \Delta_{\tilde{g},\mathbb{Q}}f\cdot\phi^{-1}hd V_{\tilde{g},\mathbb{Q}}
=\int \phi^{\frac{Q+2}{Q-2}}\Delta_{\tilde{g},\mathbb{Q}}f\cdot hd V_{g,\mathbb{Q}}.
\end{aligned}
\end{equation*}
The proposition is proved.
\end{proof}

\begin{cor}
The   qc Yamabe operator $L_{g,\mathbb{Q}}$ satisfies the transformation law
\begin{align}\label{121}
L_{\tilde{g},\mathbb{Q}}f=\phi^{-\frac{Q+2}{Q-2}}L_{g,\mathbb{Q}}(\phi f),
\end{align}
if $\tilde{g}=\phi^{\frac{4}{Q-2}}g$ and $f\in C^{\infty}(M)$.
\end{cor}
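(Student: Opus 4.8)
The plan is to obtain this identity purely algebraically by combining two facts already established in the excerpt: the transformation law (\ref{123}) for the SubLaplacian and the qc Yamabe equation (\ref{122}) of Corollary \ref{1.6}. There is no geometry left to do here; the content is bookkeeping of the conformal weights.

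Concretely, I would start from the definition $L_{g,\mathbb{Q}}=b_{n}\Delta_{g,\mathbb{Q}}+s_{g,\mathbb{Q}}$ and compute $L_{g,\mathbb{Q}}(\phi f)$ for $\phi=\tilde g^{\,\text{conformal factor}}$ and $f\in C^{\infty}(M)$. Applying (\ref{123}) to the Laplacian term gives
\begin{align*}
L_{g,\mathbb{Q}}(\phi f)=b_{n}\Delta_{g,\mathbb{Q}}\phi\cdot f+b_{n}\phi^{\frac{Q+2}{Q-2}}\Delta_{\tilde g,\mathbb{Q}}f+s_{g,\mathbb{Q}}\phi f.
\end{align*}
Then I would group the two terms not involving $\Delta_{\tilde g,\mathbb{Q}}f$ as $f\bigl(b_{n}\Delta_{g,\mathbb{Q}}\phi+s_{g,\mathbb{Q}}\phi\bigr)$ and invoke (\ref{122}), which says exactly that this parenthesis equals $s_{\tilde g,\mathbb{Q}}\phi^{\frac{Q+2}{Q-2}}$. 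This yields
\begin{align*}
L_{g,\mathbb{Q}}(\phi f)=\phi^{\frac{Q+2}{Q-2}}\bigl(b_{n}\Delta_{\tilde g,\mathbb{Q}}f+s_{\tilde g,\mathbb{Q}}f\bigr)=\phi^{\frac{Q+2}{Q-2}}L_{\tilde g,\mathbb{Q}}f,
\end{align*}
and multiplying through by $\phi^{-\frac{Q+2}{Q-2}}$ gives (\ref{121}).

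Two small points worth checking rather than real obstacles: first, that the exponent $\frac{Q+2}{Q-2}$ produced by (\ref{123}) is the same one appearing on the right of (\ref{122}), so that the grouping is legitimate — it is, since both come from the same conformal rescaling $\tilde g=\phi^{4/(Q-2)}g$; and second, that (\ref{123}) and (\ref{122}) as stated apply to real smooth functions, which is the setting of the corollary (and linearity handles the general real case anyway). I do not anticipate any genuine difficulty; the only thing that could go wrong is a sign or exponent slip in transcribing $b_{n}=4\frac{Q+2}{Q-2}$, so I would keep $b_{n}$ symbolic throughout and only note at the end that its specific value is irrelevant to the derivation.
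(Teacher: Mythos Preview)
Your proposal is correct and matches the paper's proof essentially line for line: both start from $L_{g,\mathbb{Q}}(\phi f)=b_{n}\Delta_{g,\mathbb{Q}}(\phi f)+s_{g,\mathbb{Q}}\phi f$, apply (\ref{123}) to expand the SubLaplacian term, and then invoke (\ref{122}) to replace $b_{n}\Delta_{g,\mathbb{Q}}\phi+s_{g,\mathbb{Q}}\phi$ by $s_{\tilde g,\mathbb{Q}}\phi^{\frac{Q+2}{Q-2}}$.
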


\begin{proof}

By using (\ref{122}) and (\ref{123}), we have
\begin{equation*}
\begin{aligned}
L_{g,\mathbb{Q}}(\phi  f)&=b_{n}\Delta_{g,\mathbb{Q}}(\phi f)+s_{g,\mathbb{Q}}\phi f=b_{n}\left(\Delta_{g,\mathbb{Q}}\phi\cdot f+\phi^{\frac{Q+2}{Q-2}}
 \Delta_{\tilde{g},\mathbb{Q}}f\right)
+s_{g,\mathbb{Q}}\phi f\\
&=\phi^{\frac{Q+2}{Q-2}}\left(b_{n}\Delta_{\tilde{g},\mathbb{Q}}f
+s_{\tilde{g},\mathbb{Q}}f\right).
\end{aligned}
\end{equation*}
The result follows.
\end{proof}

\subsection{The Green function of the qc Yamabe operator}
For simplicity we write $G_{g_{0},\mathbb{Q}},$ $\Delta_{g_{0},\mathbb{Q}},$ $L_{g_{0},\mathbb{Q}},$ $\langle\cdot,\cdot\rangle_{g_{0},\mathbb{Q}}$ as
$G_{0},$ $\Delta_{0},$ $L_{0},$ $\langle\cdot,\cdot\rangle_{0}$. Since  the Biquard connection on $\mathscr{H}^{n}$ is trivial \cite{Ivanov}, its
connections and curvatures vanish. For $u\in C^{1}(\mathscr{H}^{n}),$ we have
\begin{align*}
du=\sum_{j=1}^{4n}Y_{j}u\cdot\theta^{j}+\sum_{s=1}
^{3}\frac{\partial u}{\partial t_{s}}\cdot\theta_{0;s}
\end{align*}
and $d_{b}u=\sum_{j=1}^{4n}Y_{j}u\cdot\theta^{j}$, where $\theta^{j}=d y_{j},$ and $Y_{j}$ is given by (\ref{2.43}). Recall that $\langle
Y_{j},Y_{k}\rangle_{{0}}=2\delta_{jk}$ and $\langle\theta^{j},\theta^{k}\rangle_{{0}}=\frac{1}{2}\delta_{jk}$ for $j,k=1,\cdots,4n.$ Hence,
\begin{align*}
\langle d_{b}u,d_{b}v\rangle_{0}=\frac{1}{2}\sum_{j=1}^{4n}
Y_{j}u\cdot Y_{j}v
\end{align*}
for real functions $u,$  $v,$ and the SubLaplacian is
\begin{align*}
\Delta_{0}=-\frac{1}{2}\sum_{j=1}^{4n}Y_{j}Y_{j}
\end{align*}
by (\ref{231}).

A continuous function $G_{g,\mathbb{Q}}:M\times M\backslash {\rm diag}M\rightarrow \mathbb{R}$ is called the \emph{Green function} of the qc Yamabe
operator  $L_{g,\mathbb{Q}}$ if
\begin{align*}
\int_{M}G_{g,\mathbb{Q}}(\xi,\eta)L_{g,\mathbb{Q}}u(\eta)
dV_{g,\mathbb{Q}}(\eta)=u(\xi)
\end{align*}
for all $u\in C_{0}^{\infty}(M).$ Namely, $L_{g,\mathbb{Q}}G_{g,\mathbb{Q}}(\xi,\cdot)=\delta_{\xi}.$

In \cite{kaplan} Kaplan found the explicit form of the fundamental solution of the SubLaplacian  on groups of H-type. Since the  quaternionic Heisenberg
groups is a group of H-type, we know its fundamental solution.
\begin{prop}\label{1.10}
The Green function of the qc Yamabe operator $L_{0}=b_{n}\Delta_{0}$ on the quaternionic Heisenberg group $\mathscr{H}^{n}$ with the pole at $\xi$ is
\begin{align*}
G_{0}(\xi,\eta):=\frac{C_{Q}}{\|\xi^{-1}\eta\|^{Q-2}},
\end{align*}
for $\xi\neq \eta,\ \xi,\eta\in \mathscr{H}^{n}$, where $\|\cdot\|$ is the norm on $\mathscr{H}^{n}$ defined by (\ref{124}) and
\begin{align}\label{146}
{C_{Q}}^{-1}=8(n+1)(n+2)b_{n}
\int_{\mathbb{R}^{4n+3}}\frac{|y|^{2}}{(|y|^{4}+|t|^{2}+1)^{n+3}}dV_{0},
\end{align}
\noindent where $dV_{0}$ is \emph{Lebesgue measure}.
\end{prop}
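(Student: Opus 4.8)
The plan is to verify directly that the proposed function $G_0(\xi,\eta)=C_Q\|\xi^{-1}\eta\|^{-(Q-2)}$ is a fundamental solution of $L_0=b_n\Delta_0$ on $\mathscr{H}^n$, i.e. that $b_n\Delta_0 G_0(\xi,\cdot)=\delta_\xi$. Since the left translations $\tau_{(y',t')}$ of (\ref{2.25}) are qc automorphisms of $\mathscr{H}^n$ and $\Delta_0=-\tfrac12\sum_j Y_jY_j$ is built from left-invariant vector fields, it suffices to take $\xi=0$ and prove $b_n\Delta_0 u=\delta_0$ for $u(\eta)=C_Q\|\eta\|^{-(Q-2)}$. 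First I would exploit homogeneity: under the dilations $D_\delta$ of the paper, $\|D_\delta\eta\|=\delta\|\eta\|$, so $u$ is homogeneous of degree $-(Q-2)$; since $\Delta_0$ is homogeneous of degree $-2$ with respect to $D_\delta$, $\Delta_0 u$ is homogeneous of degree $-Q$, which is exactly the homogeneity of a Dirac mass on $\mathscr{H}^n$ (whose homogeneous dimension is $Q=4n+6$). Thus $\Delta_0 u$ must be supported at the origin and, modulo a multiplicative constant, equals $\delta_0$; the whole computation reduces to pinning down that constant.

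Next I would carry out the pointwise computation of $\Delta_0 u$ away from $0$ and show it vanishes there. Writing $N(\eta)=\|\eta\|^4=|y|^4+|t|^2$ and $u=C_Q N^{-(Q-2)/4}$, one computes $Y_j u$ and $Y_jY_j u$ using the explicit form (\ref{2.43}) of $Y_{4l+j}$ and the structure constants $b^s_{kj}$ of (\ref{2.14}); the $H$-type identities satisfied by the $b^s$ (in particular (\ref{2.144}) and the orthogonality relations $\sum_k b^s_{kj}b^{s}_{kj'}=\delta_{jj'}$, $\sum_k b^s_{kj}b^{s'}_{kj}=0$ for $s\ne s'$) force the cross terms to cancel, leaving $\Delta_0 N^{-(Q-2)/4}=0$ for $\eta\ne 0$. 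This is the computation Kaplan performed in \cite{kaplan} for general $H$-type groups, and I would simply cite that $N^{-(Q-2)/4}$ (up to constant) is $\Delta_0$-harmonic off the origin; the group $\mathscr{H}^n$ is $H$-type by the discussion in $\S$\ref{s2.2}.

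To determine $C_Q$ I would regularize: set $N_\epsilon=|y|^4+|t|^2+\epsilon^2$ and $u_\epsilon=C_Q N_\epsilon^{-(Q-2)/4}$, compute $\Delta_0 u_\epsilon$ explicitly (now a genuine smooth function), and observe that it concentrates near $0$ as $\epsilon\to 0$. A direct calculation using (\ref{2.43}) gives $\Delta_0 N_\epsilon^{-(Q-2)/4}$ as an explicit multiple of $\epsilon^2|y|^2 N_\epsilon^{-(Q+4)/4}$ plus lower-order terms — this is where the factor $8(n+1)(n+2)b_n$ in (\ref{146}) will emerge from the Laplacian of the power of $N_\epsilon$ — and then I would integrate against a test function, change variables $y=\epsilon^{1/2}y'$, $t=\epsilon t'$ (the parabolic scaling respecting $N_\epsilon$), and pass to the limit. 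The total mass picked up is $b_n$ times the integral $8(n+1)(n+2)\int_{\mathbb{R}^{4n+3}}\frac{|y|^2}{(|y|^4+|t|^2+1)^{n+3}}\,dV_0$, so choosing $C_Q$ to be the reciprocal of this, as in (\ref{146}), makes $L_0 G_0(\xi,\cdot)=\delta_\xi$. The main obstacle is the bookkeeping in this last step: correctly identifying which term in $\Delta_0 N_\epsilon^{-(Q-2)/4}$ survives the rescaling limit and matching its coefficient with the constant in (\ref{146}); the harmonicity-off-the-origin part is essentially automatic from $H$-type structure and homogeneity. (The Appendix is promised to give a self-contained version of exactly this constant computation, so here I would be content to cite \cite{kaplan} together with the homogeneity argument and defer the constant to the Appendix.)
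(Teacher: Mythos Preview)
Your approach is essentially the paper's: reduce to $\xi=0$ by left invariance, regularize by $N_\epsilon=|y|^4+|t|^2+\epsilon^2$, compute $\Delta_0 N_\epsilon^{-(Q-2)/4}$ explicitly, and let $\epsilon\to 0$ to recover the constant in (\ref{146}). Two small corrections to your sketch: the paper's computation (Appendix, (\ref{a4})) gives the \emph{exact} identity
\[
\Delta_0\,(N_\epsilon)^{-(Q-2)/4}=\frac{(Q-2)(Q+2)}{2}\,\frac{|y|^2\,\epsilon^2}{N_\epsilon^{(Q+6)/4}},
\]
with no ``lower-order terms'', and the exponent is $(Q+6)/4=n+3$ (not $(Q+4)/4$); this is exactly what makes your parabolic rescaling $y=\epsilon^{1/2}y'$, $t=\epsilon t'$ produce a nonzero finite limit and yields the factor $8(n+1)(n+2)=\tfrac{(Q-2)(Q+2)}{2}$ in (\ref{146}).
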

In the Appendix,  we prove this proposition by  simple and direct calculation.

\begin{prop}\label{2.1}
Let $(M,g,\mathbb{Q})$ be a connected compact qc manifold. Then one and only one of the following cases holds: there is  a qc metric $\tilde{g}$ conformal
to $g$  which have  either positive,   negative or  vanishing scalar curvature  everywhere.
\end{prop}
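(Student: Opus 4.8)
The plan is to run the standard Yamabe-type trichotomy argument, in the sub-elliptic setting, exactly as in the Riemannian case \cite{yau} and the CR case \cite{wang1}: the organizing object is the lowest eigenvalue of the qc Yamabe operator $L_{g,\mathbb{Q}}$. First I would record that on the compact manifold $M$ the operator $L_{g,\mathbb{Q}}=b_{n}\Delta_{g,\mathbb{Q}}+s_{g,\mathbb{Q}}$ is formally self-adjoint for $\langle\cdot,\cdot\rangle_{g,\mathbb{Q}}$; this is immediate from the defining identity (\ref{3.9}) of $\Delta_{g,\mathbb{Q}}$ together with the fact that $s_{g,\mathbb{Q}}$ is a smooth real potential. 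Since the horizontal fields $\{X_{j}\}_{j=1}^{4n}$ spanning $H$ satisfy H\"ormander's bracket condition, the operator $\Delta_{g,\mathbb{Q}}$ (see (\ref{231})) is hypoelliptic, and, by the compactness of the natural Sobolev embedding on the compact manifold $M$, $L_{g,\mathbb{Q}}$ has compact resolvent, hence discrete spectrum bounded below. Let
\begin{align*}
\lambda_{1}=\lambda_{1}(L_{g,\mathbb{Q}})=\inf_{u\not\equiv 0}\frac{\int_{M}\left(b_{n}\langle d_{b}u,d_{b}u\rangle_{g}+s_{g,\mathbb{Q}}u^{2}\right)dV_{g,\mathbb{Q}}}{\int_{M}u^{2}\,dV_{g,\mathbb{Q}}}
\end{align*}
denote its lowest eigenvalue.

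Next I would produce a positive first eigenfunction. The infimum above is attained by sub-elliptic spectral theory; replacing a minimizer $u$ by $|u|$ does not increase the numerator, since $d_{b}|u|=(\operatorname{sgn}u)\,d_{b}u$ almost everywhere and $u^{2}=|u|^{2}$, so the minimizer may be taken to be $\phi\ge 0$. Sub-elliptic regularity makes $\phi$ smooth, and the equation $b_{n}\Delta_{g,\mathbb{Q}}\phi=(\lambda_{1}-s_{g,\mathbb{Q}})\phi$ together with the strong maximum principle for hypoelliptic operators satisfying H\"ormander's condition (Bony) forces $\phi>0$ everywhere on $M$.

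Now I would exhibit the conformal representative. Put $\tilde{g}=\phi^{\frac{4}{Q-2}}g$. The qc Yamabe equation (\ref{122}) gives $s_{\tilde{g},\mathbb{Q}}\,\phi^{\frac{Q+2}{Q-2}}=b_{n}\Delta_{g,\mathbb{Q}}\phi+s_{g,\mathbb{Q}}\phi=L_{g,\mathbb{Q}}\phi=\lambda_{1}\phi$, whence
\begin{align*}
s_{\tilde{g},\mathbb{Q}}=\lambda_{1}\,\phi^{-\frac{4}{Q-2}},
\end{align*}
which has the sign of $\lambda_{1}$ at every point. Thus $\tilde{g}$ has everywhere positive, everywhere vanishing, or everywhere negative scalar curvature according as $\lambda_{1}>0$, $\lambda_{1}=0$, or $\lambda_{1}<0$; in each of the three cases such a representative exists.

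Finally I would check that the three alternatives are mutually exclusive by showing that the sign of $\lambda_{1}$ depends only on the conformal class $[g]$. For any $\hat{g}=\psi^{\frac{4}{Q-2}}g$, the transformation law (\ref{121}) together with $dV_{\hat{g},\mathbb{Q}}=\psi^{\frac{2Q}{Q-2}}dV_{g,\mathbb{Q}}$ from (\ref{3.19}) yields
\begin{align*}
\int_{M}f\,L_{\hat{g},\mathbb{Q}}f\,dV_{\hat{g},\mathbb{Q}}=\int_{M}(\psi f)\,L_{g,\mathbb{Q}}(\psi f)\,dV_{g,\mathbb{Q}}\qquad\text{for all }f\in C^{\infty}(M),
\end{align*}
and $f\mapsto\psi f$ is a bijection of $C^{\infty}(M)\setminus\{0\}$; hence $L_{\hat{g},\mathbb{Q}}$ is positive definite (resp. positive semidefinite with nontrivial kernel, resp. indefinite) if and only if $L_{g,\mathbb{Q}}$ is, i.e. $\operatorname{sgn}\lambda_{1}(L_{\hat{g},\mathbb{Q}})=\operatorname{sgn}\lambda_{1}(L_{g,\mathbb{Q}})$. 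Consequently, if some metric in $[g]$ has, say, everywhere positive scalar curvature, then testing with $f\equiv 1$ shows $\lambda_{1}>0$ for that metric and hence for every metric in $[g]$, which rules out the vanishing and negative cases; the other two exclusions are identical. Therefore exactly one of the three alternatives holds. The only genuinely non-formal ingredient is the positivity of the first eigenfunction, that is, the strong maximum principle for the hypoelliptic operator $L_{g,\mathbb{Q}}$; everything else is a direct consequence of the conformal transformation laws already established.
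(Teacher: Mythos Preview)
Your proof is correct and follows essentially the same strategy as the paper's: exhibit a positive smooth first eigenfunction $\phi$ of $L_{g,\mathbb{Q}}$, read off $s_{\tilde g,\mathbb Q}=\lambda_1\phi^{-4/(Q-2)}$ from the qc Yamabe equation (\ref{122}), and use the sign of $\lambda_1$ (a conformal invariant) for mutual exclusivity; the paper simply cites \cite{Ivanov4} for the positivity and regularity of $\phi$ where you invoke Bony's strong maximum principle, and your exclusivity argument via (\ref{121}) and (\ref{3.19}) is more explicit than the paper's. One small slip in your last paragraph: ``testing with $f\equiv 1$'' yields an \emph{upper} bound for $\lambda_1$, not a lower one; to conclude $\lambda_1>0$ from $s_{\hat g,\mathbb Q}>0$ everywhere you should instead note that every Rayleigh quotient is at least $\min_M s_{\hat g,\mathbb Q}>0$ (the test function $f\equiv 1$ is what you want for the reverse implications, showing $\lambda_1\le 0$ when $s_{\hat g,\mathbb Q}\le 0$).
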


\begin{proof}
The qc Yamabe operator $L_{g,\mathbb{Q}}$ is a formally
self-adjoint and subelliptic differential operator. So its spectrum is real and bounded from below. Let $\lambda_{1}$ be the first eigenvalue of
$L_{g,\mathbb{Q}}$ and let $\phi$ be an eigenfunction of $L_{g,\mathbb{Q}}$ with eigenvalue $\lambda_{1}$. Then $\phi>0$ and is  $C^{\infty}$ by Theorem
3.6 in \cite{Ivanov4}. The scalar curvature of $(M,\tilde{g},\mathbb{Q})$ with $\tilde{g}=\phi^{\frac{4}{Q-2}}g$ is
$s_{\tilde{g},\mathbb{Q}}=\lambda_{1}\phi^{-\frac{4}{Q-2}}$ by the qc Yamabe equation (\ref{122}). In particular, $s_{\tilde{g},\mathbb{Q}}>0$ (resp.
$s_{\tilde{g},\mathbb{Q}}<0,$  resp. $s_{\tilde{g},\mathbb{Q}}\equiv0$) if $\lambda_{1}>0$  (resp. $\lambda_{1}<0$, resp. $\lambda_{1}=0$).
On the other hand, if $\hat{g}$ has scalar curvature $s_{\hat{g},\mathbb{Q}}>0$ (resp. $s_{\hat{g},\mathbb{Q}}<0,$ resp. $s_{\hat{g},\mathbb{Q}}\equiv0$),
the first eigenvalue ${\hat{\lambda}}_{1}$ of $L_{\hat{g},\mathbb{Q}}$ obviously satisfies ${\hat{\lambda}}_{1}>0$  (resp. ${\hat{\lambda}}_{1}<0$, resp.
${\hat{\lambda}}_{1}=0$).
\end{proof}
\begin{rem} This proposition does not exclude the possibility that a scalar positive qc manifold has a metric  with  scalar curvature vanishing or negative
somewhere.
\end{rem}
Define the qc \emph{Yamabe invariant}
\begin{align}\label{4.7}
\lambda(M,g,\mathbb{Q}):=\inf_{u>0}
\frac{\int_{M}\left(b_{n}|\nabla_{g}u|^{2}+s_{g,\mathbb{Q}}
u^{2}\right)dV_{g,\mathbb{Q}}}{\left(\int_{M}u^{\frac{2Q}{Q-2}}
dV_{g,\mathbb{Q}}\right)^{\frac{Q-2}{Q}}},
\end{align}
where $|\nabla_{g}f|^{2}=\sum_{j=1}^{4n}|X_{j}f|^{2}$ if $\{X_{j}\}$ is a local orthogonal basis of $H$ under the Carnot-Carath\'eodory metric $g.$ It is
an invariant  for the conformal  class of qc manifolds (cf. p. 361 in \cite{wang2}).

Theorem 1.1 in \cite{wang2}  tells us that, if $\lambda(M,g,\mathbb{Q})<\lambda(\mathscr{H}^{n},g_{0},\mathbb{Q}_{0})$, then (\ref{4.7}) can be achieved
by a positive $C^{\infty}$ solution $u$ of the qc Yamabe equation  (\ref{122}), i.e. it has a constant scalar curvature
$s_{\tilde{g},\mathbb{Q}}=\lambda(M,g,\mathbb{Q})$ for $\tilde{g}=u^{\frac{4}{Q-2}}g$. It is known that  $\lambda(\mathscr{H}^{n},g_{0},\mathbb{Q}_{0})>0$
(cf. Corollary 2.1 \cite{Ivanov5}). Thus, $(M,g,\mathbb{Q})$ is scalar negative or zero if and only if $\lambda(M,g,\mathbb{Q})$ is negative or zero,
respectively. Consequently,  $(M,g,\mathbb{Q})$ is scalar positive if and only if $\lambda(M,g,\mathbb{Q})$ is positive.

From now on in this section, we assume $(M,g,\mathbb{Q})$ to be connected,  compact and scalar positive.  In this case, the qc Yamabe operator
$L_{g,\mathbb{Q}}=b_{n}\Delta_{g,\mathbb{Q}}+s_{g,\mathbb{Q}}$ is a positive operator and its inverse  always exists. The Green function is the Schwarz
kernel of the inverse operator and can be constructed as in the following proposition. Moreover, we also find the singular part of the Green function. In
this case, the Green function of the qc Yamabe operator $L_{g,\mathbb{Q}}$ is unique.

\begin{prop}\label{p3.7}
Let $(M,g,\mathbb{Q})$ be a connected  compact spherical qc manifold with positive scalar curvature and let $U$ be a sufficiently small open set. Then the
function $G_{g,\mathbb{Q}}(\xi,\eta)-\rho_{g,\mathbb{Q}}(\xi,\eta)$ can be extended to a $C^{\infty}$ function on $U\times U$, where
$\rho_{g,\mathbb{Q}}(\cdot,\cdot)$ is given by (\ref{222}).
\end{prop}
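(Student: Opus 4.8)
The plan is to recognise the proposed singular part $\rho_{g,\mathbb{Q}}(\xi,\cdot)$ as a genuine fundamental solution of $L_{g,\mathbb{Q}}$ on the chart $U$. Then the difference with the global Green function is annihilated by $L_{g,\mathbb{Q}}$ there, and hypoellipticity of the subLaplacian gives smoothness.

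First I would use sphericity: after shrinking $U$, identify it with an open subset of $\mathscr{H}^{n}$ on which $g=\phi^{\frac{4}{Q-2}}g_{0}$ for a smooth positive $\phi$, with $g_{0}$ the standard flat qc metric, as in the discussion preceding (\ref{222}). By the transformation law (\ref{121}), $L_{g,\mathbb{Q}}u=\phi^{-\frac{Q+2}{Q-2}}L_{0}(\phi u)$ on $U$, and by (\ref{3.19}), $dV_{g,\mathbb{Q}}=\phi^{\frac{2Q}{Q-2}}dV_{g_{0},\mathbb{Q}}$. Substituting these into $\int_{U}\rho_{g,\mathbb{Q}}(\xi,\eta)\,L_{g,\mathbb{Q}}u(\eta)\,dV_{g,\mathbb{Q}}(\eta)$ for $u\in C_{0}^{\infty}(U)$, the powers of $\phi(\eta)$ collect to $-1-\tfrac{Q+2}{Q-2}+\tfrac{2Q}{Q-2}=0$, so the integral reduces to $\tfrac{1}{\phi(\xi)}\int_{U}G_{0}(\xi,\eta)\,L_{0}(\phi u)(\eta)\,dV_{g_{0},\mathbb{Q}}(\eta)=\tfrac{1}{\phi(\xi)}(\phi u)(\xi)=u(\xi)$, using Proposition \ref{1.10} and the fact that $G_{0}(\xi,\cdot)$ is still a fundamental solution of $L_{0}$ on the subdomain $U$. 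Thus $L_{g,\mathbb{Q}}\rho_{g,\mathbb{Q}}(\xi,\cdot)=\delta_{\xi}$ on $U$. This cancellation of conformal weights is exactly what fixes the normalisation in (\ref{222}); it is the step I would carry out most carefully, and I regard it as the heart of the matter.

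Since $L_{g,\mathbb{Q}}G_{g,\mathbb{Q}}(\xi,\cdot)=\delta_{\xi}$ holds on all of $M$, the difference $w_{\xi}:=G_{g,\mathbb{Q}}(\xi,\cdot)-\rho_{g,\mathbb{Q}}(\xi,\cdot)$ then satisfies $L_{g,\mathbb{Q}}w_{\xi}=0$ on $U$ in the distributional sense; this is legitimate because both terms are locally integrable near $\xi$ (their singularities are $O(d(\xi,\cdot)^{-(Q-2)})$, which is integrable since $Q-2$ is below the homogeneous dimension $Q$) and $G_{g,\mathbb{Q}}(\xi,\cdot)$ is smooth on $M\setminus\{\xi\}$. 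By Proposition \ref{2.4}, $L_{g,\mathbb{Q}}=b_{n}\Delta_{g,\mathbb{Q}}+s_{g,\mathbb{Q}}$ is, up to sign and constants, a sum of squares of the horizontal fields $\{X_{j}\}$ plus lower-order horizontal terms, and the $X_{j}$ bracket-generate $TM$ through the qc structure; hence $L_{g,\mathbb{Q}}$ is hypoelliptic by H\"ormander's theorem, and $w_{\xi}\in C^{\infty}(U)$. For joint smoothness on $U\times U$ I would exploit symmetry: $G_{g,\mathbb{Q}}$ is symmetric because $L_{g,\mathbb{Q}}$ is formally self-adjoint, and $\rho_{g,\mathbb{Q}}$ is symmetric because $\|z^{-1}\|=\|z\|$ on $\mathscr{H}^{n}$, so $w(\xi,\eta)=G_{g,\mathbb{Q}}(\xi,\eta)-\rho_{g,\mathbb{Q}}(\xi,\eta)$, locally integrable on $U\times U$, is annihilated by $L_{g,\mathbb{Q}}$ acting in $\eta$ and by $L_{g,\mathbb{Q}}$ acting in $\xi$, hence by $\mathcal{L}:=L_{g,\mathbb{Q},\xi}+L_{g,\mathbb{Q},\eta}$. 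Since the union of the two families of horizontal fields bracket-generates $T(U\times U)$, $\mathcal{L}$ is again of H\"ormander type, so $w\in C^{\infty}(U\times U)$, which is the claim.

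An equivalent but perhaps cleaner way to finish is a parametrix argument: let $P$ be the operator with kernel $\chi(\xi,\eta)\,\rho_{g,\mathbb{Q}}(\xi,\eta)$ for a cutoff $\chi$ equal to $1$ near the diagonal of $U\times U$; the computation of the first step shows the error operator $R:=L_{g,\mathbb{Q}}P-\mathrm{Id}$ has a smooth kernel, whence $L_{g,\mathbb{Q}}^{-1}=P-L_{g,\mathbb{Q}}^{-1}R$ has kernel $\chi\rho_{g,\mathbb{Q}}$ plus a smooth kernel, which gives the assertion wherever $\chi\equiv1$. The main obstacle throughout is the weighted bookkeeping of the first step — arranging the three conformal exponents so that they cancel — together with phrasing the distributional and hypoellipticity steps correctly in the subelliptic category; once those are in place the conclusion is immediate.
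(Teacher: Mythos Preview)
Your argument is correct and close in spirit to the paper's, though organised differently. The paper proceeds constructively: it writes $G_{g,\mathbb{Q}}(\xi,\eta)=\phi(\xi)^{-1}\phi(\eta)^{-1}\tilde{G}(\xi,\eta)+G_2(\xi,\eta)$, where $\tilde{G}$ is the flat fundamental solution times a cutoff and $G_2$ is obtained by applying $L_{g,\mathbb{Q}}^{-1}$ (available because scalar positivity makes $L_{g,\mathbb{Q}}$ invertible) to the resulting smooth error; smoothness of $G_2$ in $\eta$ comes from subelliptic regularity, and in $\xi$ by differentiating the defining equation in the parameter. Your main line instead takes $G_{g,\mathbb{Q}}$ as given, shows via the same conformal-weight cancellation that the difference $w$ is $L_{g,\mathbb{Q}}$-harmonic on $U$, and invokes H\"ormander hypoellipticity; your ``cleaner'' parametrix alternative at the end is essentially the paper's construction verbatim. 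The one place your route is arguably tidier is joint smoothness: the paper's ``differentiate in $\xi$ repeatedly'' really gives separate smoothness together with implicit uniform subelliptic estimates, whereas your application of the product operator $L_{g,\mathbb{Q},\xi}+L_{g,\mathbb{Q},\eta}$ on $U\times U$ settles joint $C^\infty$ regularity in a single hypoellipticity step.
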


\begin{proof} The proof is similar to the CR case. 
Suppose that $\bar{U}\subset \tilde{U}\subset \mathscr{H}^{n}$ and $g=\phi^{\frac{4}{Q-2}}g_{0}$ on $\tilde{U}$. We choose a sufficiently small $\rho$
such that $B_{\mathscr{H}}(\xi,\rho)\subset\tilde{U}$ for any $\xi\in U$. We can construct the Green function as follows. For $\xi, \eta\in U$, define
\begin{align*}
\tilde{G}(\xi, \eta)=\tilde{G}(\xi^{-1}\eta),
\end{align*}
where $\tilde{G}$ is the cut-off fundamental solution, i.e.  $\tilde{G}(\tilde{\eta})=\frac{C_{Q}}{\|\tilde{\eta}\|^{Q-2}}
f(\tilde{\eta})$  for $\tilde{\eta}\in \mathscr{H}^{n}.$ Here $f\in C_{0}^{\infty}(\mathscr{H}^{n})$  satisfying $f\equiv 1 $ on
$B_{\mathscr{H}}(0,\frac{\rho}{2})$ and $f\equiv 0$ on $B_{\mathscr{H}}(0,{\rho})^{c}$.
Recall that $L_{0}=-\frac{b_{n}}{2}\sum_{j=1}^{4n}Y_{j}Y_{j},$ where $Y_{j}$ are given by (\ref{2.43}).
Then,
\begin{equation}\label{3.30}
\begin{aligned}
L_{0}\tilde{G}(\tilde{\eta})=&L_{0}\left(\frac{C_{Q}}{\|\tilde{\eta}\|^{Q-2}}
f(\tilde{\eta})\right)
=\delta_{0}-{b_{n}}\sum_{j=1}^{4n}Y_{j}
\left(\frac{C_{Q}}{\|\tilde{\eta}\|^{Q-2}}\right)Y_{j}f(\tilde{\eta})+\frac{C_{Q}}
{\|\tilde{\eta}\|^{Q-2}}
L_{0}f(\tilde{\eta})\\=:
&\delta_{0}+\tilde{G}_{1}(\tilde{\eta})
\end{aligned}
\end{equation}
by $\frac{C_{Q}}{\|\tilde{\eta}\|^{Q-2}}$ being the fundamental solution of $L_{0}$ and  $Y_{j}f\equiv0$ on $B_{\mathscr{H}}(0,\frac{\rho}{2})$. Here
$\delta_{0}$ is the Dirac function at the origin with respect to  the measure $dV_{0}$ and $\tilde{G}_{1}$ is defined by the last equality in
(\ref{3.30}).
Set ${G}_{1}(\xi,\eta):=\tilde{G}_{1}(\xi^{-1}\eta)$ for $\xi,\eta\in U$. Then, ${G}_{1}(\xi,\eta)\in C^{\infty}(U\times U)$ and for each $\xi\in U$,
${G}_{1}(\xi,\cdot)$ can be naturally extended to a smooth function on $M$ satisfying ${G}_{1}(\xi,\eta)=0$ for $\eta\notin\tilde{U}$. By
transformation law (\ref{121}) and left invariance of $Y_{j}$, we find that
\begin{equation*}
\begin{aligned}
L_{g,\mathbb{Q}}(\phi(\xi)^{-1}\phi(\cdot)^{-1}{\tilde{G}}(\xi,\cdot))
&=\phi(\xi)^{-1}\phi(\cdot)^{-\frac{Q+2}{Q-2}}L_{0}{\tilde{G}}(\xi,\cdot)
=\phi(\xi)^{-1}\phi(\cdot)^{-\frac{Q+2}{Q-2}}(\delta_{0}(\xi^{-1}\cdot)+{G}_{1}(\xi,\cdot))\\
&=\delta_{\xi}+\phi(\xi)^{-1}\phi(\cdot)^{-\frac{Q+2}{Q-2}}{G}_{1}(\xi,\cdot),
\end{aligned}
\end{equation*}
on $U$ for $\xi\in U$ , where $\delta_{\xi}$ is the Dirac function at point $\xi$  with respect to  the measure $dV_{g,\mathbb{Q}}=
\phi^{\frac{2Q}{Q-2}}dV_{0}$. Now  set
\begin{align}\label{311}
G(\xi,\eta):=\phi(\xi)^{-1}\phi(\eta)^{-1}{\tilde{G}}(\xi,\eta)+{G}_{2}
(\xi,\eta)
\end{align}
for ${\eta}\in M$, where ${G}_{2}(\xi,\eta)$ satisfies
\begin{align}\label{3.32}
L_{g,\mathbb{Q}}{G}_{2}(\xi,\cdot)
=-\phi(\xi)^{-1}\phi(\cdot)^{-\frac{Q+2}{Q-2}}{G}_{1}(\xi,\cdot).
\end{align}
$G_{2}(\xi,\cdot)$ exists since $L_{g,\mathbb{Q}}$ is invertible in $L^{2}(M)$. $G_{2}(\xi,\cdot)\in C^{\infty}(M)$ for fixed $\xi\in U$ by the
subelliptic  regularity of $L_{g,\mathbb{Q}}$. $G_{2}(\cdot,\eta)$ is also in $C^{\infty}(U)$ by differentiating (\ref{3.32}) with respect to the variable
$\xi$ repeatedly. Now we have $L_{g,\mathbb{Q}}G(\xi,\cdot)=\delta_{\xi}$, i.e.  $G(\xi,\eta)$ is the Green function  of $L_{g,\mathbb{Q}}$. By
(\ref{311}),  $G_{g,\mathbb{Q}}(\xi,\eta)-\rho_{g,\mathbb{Q}}(\xi,\eta)\in C^{\infty}(U\times U).$
\end{proof}

We have the  following transformation law of Green  functions under the conformal qc transformation.
\begin{prop}\label{p3.8}
Let $(M,g,\mathbb{Q})$ be a connected, compact, scalar positive spherical qc manifold  and $G_{g,\mathbb{Q}}$
be the Green function of the qc Yamabe operator $L_{g,\mathbb{Q}}.$ Then
\begin{align}\label{32}
G_{{\tilde{g}},\mathbb{Q}}=\frac{1}{\phi(\xi)\phi(\eta)}
G_{g,\mathbb{Q}}(\xi,\eta)
\end{align}
is the Green function of the qc Yamabe operator $L_{\tilde{g},\mathbb{Q}}$ for $\tilde{g}=\phi^{\frac{4}{Q-2}}g.$
\end{prop}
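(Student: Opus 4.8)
The plan is to verify directly that the right-hand side of (\ref{32}) satisfies the defining property of the Green function of $L_{\tilde g,\mathbb{Q}}$, namely that for all $u\in C_0^\infty(M)$,
\begin{align*}
\int_M G_{\tilde g,\mathbb{Q}}(\xi,\eta)\, L_{\tilde g,\mathbb{Q}}u(\eta)\, dV_{\tilde g,\mathbb{Q}}(\eta)=u(\xi).
\end{align*}
The three ingredients are already available in the excerpt: the conformal transformation law of the qc Yamabe operator (\ref{121}), namely $L_{\tilde g,\mathbb{Q}}f=\phi^{-\frac{Q+2}{Q-2}}L_{g,\mathbb{Q}}(\phi f)$; the transformation law of the volume form (\ref{3.19}), namely $dV_{\tilde g,\mathbb{Q}}=\phi^{\frac{2Q}{Q-2}}dV_{g,\mathbb{Q}}$; and the fact that $G_{g,\mathbb{Q}}$ is the Green function of $L_{g,\mathbb{Q}}$.

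The computation I would carry out is the following chain of substitutions. Starting from the integral above, replace $L_{\tilde g,\mathbb{Q}}u(\eta)$ by $\phi(\eta)^{-\frac{Q+2}{Q-2}}L_{g,\mathbb{Q}}(\phi u)(\eta)$ using (\ref{121}), replace $dV_{\tilde g,\mathbb{Q}}(\eta)$ by $\phi(\eta)^{\frac{2Q}{Q-2}}dV_{g,\mathbb{Q}}(\eta)$ using (\ref{3.19}), and substitute the proposed formula for $G_{\tilde g,\mathbb{Q}}$. The powers of $\phi(\eta)$ combine as $-1-\frac{Q+2}{Q-2}+\frac{2Q}{Q-2}=-1+\frac{Q-2}{Q-2}=0$, so all $\eta$-dependent factors of $\phi$ cancel. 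What remains is
\begin{align*}
\frac{1}{\phi(\xi)}\int_M G_{g,\mathbb{Q}}(\xi,\eta)\,L_{g,\mathbb{Q}}(\phi u)(\eta)\, dV_{g,\mathbb{Q}}(\eta)=\frac{1}{\phi(\xi)}\,(\phi u)(\xi)=u(\xi),
\end{align*}
using that $G_{g,\mathbb{Q}}$ is the Green function of $L_{g,\mathbb{Q}}$ applied to the test function $\phi u\in C_0^\infty(M)$ (here I use that $M$ is compact and $\phi$ is smooth and positive, so $\phi u$ is again a valid compactly supported smooth function). It also remains to note that $G_{\tilde g,\mathbb{Q}}$, as defined by (\ref{32}), is continuous on $M\times M\setminus\mathrm{diag}\,M$, which is immediate from continuity of $G_{g,\mathbb{Q}}$ there together with smoothness and positivity of $\phi$.

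There is essentially no serious obstacle here; the only thing to be careful about is bookkeeping of the exponents of $\phi(\eta)$, which must cancel exactly, and confirming that the transformation laws (\ref{121}) and (\ref{3.19}) are being applied to the correct arguments. One should also remark, if desired, that uniqueness of the Green function in the scalar positive case (noted before Proposition \ref{p3.7}) guarantees that (\ref{32}) is \emph{the} Green function of $L_{\tilde g,\mathbb{Q}}$, not merely \emph{a} Green function; this follows because $\tilde g=\phi^{\frac{4}{Q-2}}g$ is again a scalar positive metric on the compact spherical qc manifold $M$.
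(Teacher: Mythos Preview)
Your proof is correct and follows essentially the same route as the paper: apply the transformation laws (\ref{121}) and (\ref{3.19}) inside the defining integral, observe that the powers of $\phi(\eta)$ cancel, and conclude by the Green-function property of $G_{g,\mathbb{Q}}$ together with uniqueness. Your explicit check of the exponent arithmetic and the remark on continuity are minor elaborations on the same argument.
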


\begin{proof}
By (\ref{3.19}) and the transformation law (\ref{121}), we find that
\begin{equation*}
\begin{aligned}
\int_{M}\frac{G_{g,\mathbb{Q}}(\xi,\eta)
L_{\tilde{g},\mathbb{Q}}u(\eta)}{\phi(\xi)\phi(\eta)}
dV_{\tilde{g},\mathbb{Q}}&=\frac{1}{\phi(\xi)}\int_{M}\frac{1}{\phi(\eta)}
G_{g,\mathbb{Q}}(\xi,\eta)\phi(\eta)^{-\frac{Q+2}{Q-2}}L_{g,\mathbb{Q}}(\phi u)(\eta)
\phi(\eta)^{\frac{2Q}{Q-2}}dV_{g,\mathbb{Q}}\\
&=\frac{1}{\phi(\xi)}\int_{M}G_{g,\mathbb{Q}}(\xi,\eta)L_{g,\mathbb{Q}}(\phi u)(\eta)dV_{g,\mathbb{Q}}=u(\xi)
\end{aligned}
\end{equation*}
for any $u\in C_{0}^{\infty}(M)$. The proposition follows form the uniqueness of the Green function.
\end{proof}
\section{An invariant tensor on a scalar positive spherical qc manifold}
\subsection{A tensor invariant  under conformal qc transformations}
\begin{thm}
Let $(M,g,\mathbb{Q})$ be connected, compact, scalar positive, spherical qc manifold, which is not qc equivalent to the standard sphere. Define
\begin{align*}
can(g,\mathbb{Q}):=\mathcal{A}_{g,\mathbb{Q}}^{2}g,
\end{align*}
where
$\mathcal{A}_{g,\mathbb{Q}}^{2}(\xi)$ is defined in (\ref{1.3})
if $g=\phi^{\frac{4}{Q-2}}g_{0}$ on a neighborhood $U$ of $\xi$. Here $g_{0}$ is the standard qc metric  on $\mathscr{H}^{n}$. Then, $can(g,\mathbb{Q})$
is well-defined and depends only on the conformal class $[g]$ and $\mathbb{Q}$.
\end{thm}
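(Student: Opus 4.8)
The plan is to establish two things. First, for each $\xi\in M$ the limit (\ref{1.3}) exists and does not depend on the qc chart used to write $g=\phi^{\frac{4}{Q-2}}g_{0}$ near $\xi$, so that $\mathcal{A}_{g,\mathbb{Q}}$ is a genuine function on $M$ and $can(g,\mathbb{Q})=\mathcal{A}_{g,\mathbb{Q}}^{2}g$ an honest (symmetric, possibly degenerate) $2$-tensor. Second, if $\tilde{g}=u^{\frac{4}{Q-2}}g$ then $\mathcal{A}_{\tilde{g},\mathbb{Q}}^{2}\tilde{g}=\mathcal{A}_{g,\mathbb{Q}}^{2}g$; since $\mathbb{Q}$ is never altered, this is exactly the assertion that $can$ depends only on $[g]$ and $\mathbb{Q}$. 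Existence of the limit is immediate from Proposition \ref{p3.7}: after shrinking $U$ and fixing an identification of $U$ with an open subset of $\mathscr{H}^{n}$ on which $g=\phi^{\frac{4}{Q-2}}g_{0}$, the function $G_{g,\mathbb{Q}}(\xi,\cdot)-\rho_{g,\mathbb{Q}}(\xi,\cdot)$ extends to a $C^{\infty}$ function across $\eta=\xi$, so (\ref{1.3}) converges (in the notation of that proof it equals $|G_{2}(\xi,\xi)|^{1/(Q-2)}$, and since $G_{2}(\cdot,\eta)\in C^{\infty}(U)$ the resulting $\mathcal{A}_{g,\mathbb{Q}}$ is continuous on $M$); moreover $\phi$ is no extra datum, being the conformal factor relating $g$ and $g_{0}$ in the fixed chart and hence determined.

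Chart-independence is the crux. Two qc charts $c_{1},c_{2}$ about $\xi$ differ by a transition map $T=c_{2}\circ c_{1}^{-1}$ which, by the qc Liouville Theorem \ref{liou}, is the restriction of a global conformal qc transformation of $\mathscr{H}^{n}$; write $T^{*}g_{0}=\lambda_{T}^{\frac{4}{Q-2}}g_{0}$. Pulling $g=\phi_{2}^{\frac{4}{Q-2}}g_{0}$ back by $T$ and comparing with $g=\phi_{1}^{\frac{4}{Q-2}}g_{0}$ gives $\phi_{1}=(\phi_{2}\circ T)\,\lambda_{T}$. Combined with the transformation rule for the model Green function of Proposition \ref{1.10},
\begin{align*}
G_{0}(T\xi',T\eta')=\lambda_{T}(\xi')^{-1}\lambda_{T}(\eta')^{-1}G_{0}(\xi',\eta'),
\end{align*}
substitution into (\ref{222}) shows that $\rho_{g,\mathbb{Q}}$, read as a function on $U\times U$ through either chart, is the same; hence $G_{g,\mathbb{Q}}-\rho_{g,\mathbb{Q}}$ and its diagonal limit, and therefore $\mathcal{A}_{g,\mathbb{Q}}$, are chart-independent. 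The displayed rule is to be checked on the generators of the conformal qc automorphism group of $\mathscr{H}^{n}$: it is trivial for left translations, rotations in ${\rm Sp}(n)$ and the ${\rm Sp}(1)$-action (isometries preserving $\|\cdot\|$, so $\lambda\equiv 1$), and for dilations and the inversion it follows from $D_{r}^{*}g_{0}=r^{2}g_{0}$ and $R^{*}g_{0}|_{\xi}=\|\xi\|^{-4}g_{0}|_{\xi}$ in Corollary \ref{333}, together with $\|D_{r}\zeta\|=r\|\zeta\|$ and the inversion identity $\|R(\xi')^{-1}R(\eta')\|=\|\xi'^{-1}\eta'\|/(\|\xi'\|\,\|\eta'\|)$; equivalently it can be read off from Proposition \ref{1234} after transporting to the sphere by the Cayley transform (Corollary \ref{c2.1}).

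For the conformal invariance, fix $\xi$ and a chart with $g=\phi^{\frac{4}{Q-2}}g_{0}$ near $\xi$; then $\tilde{g}=(u\phi)^{\frac{4}{Q-2}}g_{0}$, so by (\ref{222}) one has $\rho_{\tilde{g},\mathbb{Q}}(\xi,\eta)=u(\xi)^{-1}u(\eta)^{-1}\rho_{g,\mathbb{Q}}(\xi,\eta)$, while Proposition \ref{p3.8} gives $G_{\tilde{g},\mathbb{Q}}(\xi,\eta)=u(\xi)^{-1}u(\eta)^{-1}G_{g,\mathbb{Q}}(\xi,\eta)$. Subtracting and letting $\eta\to\xi$,
\begin{align*}
\lim_{\eta\to\xi}\bigl(G_{\tilde{g},\mathbb{Q}}-\rho_{\tilde{g},\mathbb{Q}}\bigr)(\xi,\eta)=u(\xi)^{-2}\lim_{\eta\to\xi}\bigl(G_{g,\mathbb{Q}}-\rho_{g,\mathbb{Q}}\bigr)(\xi,\eta),
\end{align*}
so that (using $u>0$) $\mathcal{A}_{\tilde{g},\mathbb{Q}}(\xi)=u(\xi)^{-2/(Q-2)}\mathcal{A}_{g,\mathbb{Q}}(\xi)$, whence $\mathcal{A}_{\tilde{g},\mathbb{Q}}^{2}=u^{-4/(Q-2)}\mathcal{A}_{g,\mathbb{Q}}^{2}$ and therefore $\mathcal{A}_{\tilde{g},\mathbb{Q}}^{2}\tilde{g}=u^{-4/(Q-2)}\mathcal{A}_{g,\mathbb{Q}}^{2}\cdot u^{4/(Q-2)}g=\mathcal{A}_{g,\mathbb{Q}}^{2}g$.

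The hard part is the transformation rule for $G_{0}$ (equivalently $G_{S}$) used above, i.e.\ recognizing that the normalization $C_{Q}/(\phi(\xi)\phi(\eta))$ of the singular part $\rho_{g,\mathbb{Q}}$ is itself an intrinsic quantity and not an artifact of the chosen chart; once this is settled — by the generator-by-generator verification via Corollary \ref{333}, or via Proposition \ref{1234} on the sphere — both the existence of the limit and the conformal invariance are formal consequences of Propositions \ref{p3.7} and \ref{p3.8}, and the theorem follows.
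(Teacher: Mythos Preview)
Your proof is correct and follows the same overall strategy as the paper: both hinge on the transformation rule
\[
G_{0}(T\xi',T\eta')=\lambda_{T}(\xi')^{-1}\lambda_{T}(\eta')^{-1}G_{0}(\xi',\eta')
\]
for conformal qc automorphisms $T$ of $\mathscr{H}^{n}$ (the paper's identity (\ref{14})). The paper treats chart-independence and conformal invariance in a single stroke by letting both the chart and the metric vary; you separate them into two clean steps, which is equivalent.

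The one substantive difference is how the key identity is established. You verify it generator-by-generator, invoking the inversion identity $\|R(\xi')^{-1}R(\eta')\|=\|\xi'^{-1}\eta'\|/(\|\xi'\|\,\|\eta'\|)$ (true, but itself a separate computation you do not carry out). The paper instead gives a uniform argument valid for every conformal qc automorphism $f$ at once: using the covariance (\ref{121}) of $L_{0}$ and the volume transformation (\ref{15}), it shows that $\phi(\xi')\phi(\eta')\,C_{Q}\|f(\xi')^{-1}f(\eta')\|^{-(Q-2)}$ is a Green function of $L_{0}$, hence equals $G_{0}(\xi',\eta')$ by uniqueness. The paper's route is slicker and sidesteps any explicit distance identity for the inversion; your route is more hands-on and makes the group structure visible, at the cost of leaving that one identity to the reader.
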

\begin{proof}
We will verify that $\mathcal{A}_{g,\mathbb{Q}}$ is independent of the choice of local coordinates and $\mathcal{A}_{g,\mathbb{Q}}^{2}g$ is independent of
the choice of $g$ in the conformal class $[g]$. Suppose $\tilde{g}=\Phi^{\frac{4}{Q-2}}g$. Let $U\subset M$ be an open set and let $\rho:U\rightarrow
V\subset\mathscr{H}^{n}$ and $\tilde{\rho}:U\rightarrow \tilde{V}\subset\mathscr{H}^{n}$ be two coordinate charts such that
\begin{equation*}
\begin{aligned}
g=\rho^{*}\left(\phi_{1}^{\frac{4}{Q-2}}g_{0}\right),\
\tilde{g}=\tilde{\rho}^{*}\left(\phi_{2}^{\frac{4}{Q-2}}g_{0}\right),
\end{aligned}
\end{equation*}
for two positive function $\phi_{1}$ and $\phi_{2}$. Then, $f=\tilde{\rho}\circ\rho^{-1}: V\rightarrow \tilde{V}$ is a qc diffeomorphism of
$\mathscr{H}^{n}$ by the qc Liouville type Theorem \ref{liou} and
\begin{align*}
\left.f^{*}g_{0}\right|_{\xi'}=\phi^{\frac{4}{Q-2}}(\xi')
\left.g_{0}\right|_{\xi'}\quad {\rm with} \ \phi(\xi')=\phi_{1}(\xi')\phi_{2}^{-1}(f(\xi'))\Phi(\rho^{-1}(\xi')),
\end{align*}
for $\xi'\in V$. We claim the following the transformation law of the Green function on the quaternionic Heisenberg group under a conformal qc
transformation:
\begin{align}\label{14}
\frac{1}{\|f(\xi')^{-1}f(\eta')\|^{Q-2}}=\frac{1}{\phi(\xi')\phi(\eta')}
\cdot\frac{1}{\|\xi'^{-1}\eta'\|^{Q-2}},
\end{align}
for any $\xi', \eta'\in V.$
Apply this to $\xi'=\rho(\xi), \eta'=\rho(\eta)$ and $f=\tilde{\rho}\circ\rho^{-1}$ to get
\begin{align*}
\frac{1}{\|\tilde{\rho}(\xi)^{-1}\tilde{\rho}(\eta)\|^{Q-2}}=
\frac{1}{\phi(\rho(\xi))}\frac{1}{\phi(\rho(\eta))}
\frac{1}{\|\rho(\xi)^{-1}\rho(\eta)\|^{Q-2}}
\end{align*}
and so
\begin{equation*}
\begin{aligned}
\mathcal{A}_{\tilde{g},\mathbb{Q}}(\xi)&=\lim_{\eta\rightarrow \xi}\left|G_{\tilde{g},\mathbb{Q}}(\xi,\eta)-\frac{1}{
\phi_{2}(\tilde{\rho}(\xi))\phi_{2}(\tilde{\rho}(\eta))}\cdot\frac{C_{Q}}{\|
\tilde{\rho}(\xi)^{-1}\tilde{\rho}
(\eta)\|^{Q-2}}\right|
^{\frac{1}{Q-2}}\\
&=\lim_{\eta\rightarrow\xi}\left|\frac{G_{g,\mathbb{Q}}(\xi,\eta)}
{\Phi(\xi)\Phi(\eta)}-\frac{1}{
\Phi(\xi)\Phi(\eta)\phi_{1}(\rho(\xi))
\phi_{1}(\rho(\eta))}\cdot\frac{C_{Q}}{\|\rho(\xi)^{-1}
\rho(\eta)\|^{Q-2}}\right|
^{\frac{1}{Q-2}}\\
&=\Phi^{-\frac{2}{Q-2}}(\xi)\lim_{\eta\rightarrow\xi}
\left|G_{g,\mathbb{Q}}
(\xi,\eta)-\frac{1}{\phi_{1}({\rho}(\xi))\phi_{1}
(\rho(\eta))}\cdot\frac{C_{Q}}{\|\rho(\xi)^{-1}\rho(\eta)\|^{Q-2}}
\right|^{\frac{1}{Q-2}}\\
&=\Phi^{-\frac{2}{Q-2}}(\xi)\mathcal{A}_{g,\mathbb{Q}}(\xi).
\end{aligned}
\end{equation*}
Consequently, we have
\begin{align*}
\mathcal{A}_{\tilde{g},\mathbb{Q}}^{2}\tilde{g}
=\mathcal{A}_{g,\mathbb{Q}}^{2}{g}.
\end{align*}

It remains to check (\ref{14}). By qc Liouville type Theorem \ref{liou}, $f$ is a restriction to $U$ of a qc automorphism of $\mathscr{H}^{n}$, denoted
also by $f$. By the transformation law (\ref{121}), for functions ${\tilde{\phi}}:=\phi\circ f^{-1},\tilde{u}:=u\circ f^{-1}$ on $\tilde{V},$ we have
\begin{equation}
\begin{aligned}\label{15}
\left.L_{0}\left(\tilde{\phi}^{-1}\tilde{u}\right)\right|_{f(\eta')}&=
\left.L_{g,\mathbb{Q}}\left(\phi^{-1}u\right)\right|_{\eta'}=
\phi^{-\frac{Q+2}{Q-2}}(\eta')
\left.L_{0}(u)\right|_{\eta'},\\
\left.f^{*}dV_{0}\right|_{f(\eta')}&
=\phi^{\frac{2Q}{Q-2}}(\eta')\left.dV_{0}\right|_{\eta'}.
\end{aligned}
\end{equation}
 The first identity follows from the fact that the qc Yamabe operator is independent of the choice of coordinate charts. Then, by (\ref{15}) and taking
 transformation $f(\eta')\rightarrow\hat{\eta}$, we find that for any $u\in C_{0}^{\infty}(\mathscr{H}^{n})$
\begin{equation*}
\begin{aligned}
\int_{\mathscr{H}^{n}}\frac{C_{Q}\phi(\xi')
\phi(\eta')}{\|f(\xi')^{-1}f(\eta')\|^{Q-2}}
L_{0}u(\eta')dV_{{0}}(\eta')&=\int_{\mathscr{H}^{n}}\frac{C_{Q}\phi(\xi')}
{\|f(\xi')^{-1}f(\eta')\|^{Q-2}}
\left.L_{0}\left({\tilde{\phi}}^{-1}\tilde{u}\right)\right|_{f(\eta')}
f^{*}dV_{0}(\eta')\\
&=\int_{\mathscr{H}^{n}}
\frac{C_{Q}\phi(\xi')}{\|f(\xi')^{-1}\hat{\eta}\|^{Q-2}}
\left.L_{0}\left({\tilde{\phi}}^{-1}\tilde{u}\right)\right|_{\hat{\eta}}
dV_{0}(\hat{\eta})=u(\xi').
\end{aligned}
\end{equation*}
Now by the uniqueness of the Green  function of $L_{0}$, we find that
$$G_{0}(\xi',\eta')=\frac{C_{Q}\phi(\xi')\phi(\eta')}
{\|f(\xi')^{-1}f(\eta')\|^{Q-2}}.$$ Thus, (\ref{14}) follows. The theorem is proved.
\end{proof}
See \cite{Leutwiler} for the identity (\ref{14}) on the Euclidean space and see \cite{wang1} on the Heisenberg group.

\begin{cor}\label{c4.1}
$(M,can(g,\mathbb{Q}),\mathbb{Q})$ is a spherical qc manifold if the qc  positive mass conjecture is true.
\end{cor}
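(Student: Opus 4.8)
The plan is to reduce the whole statement to the single fact that, granting the qc positive mass conjecture, $\mathcal{A}_{g,\mathbb{Q}}$ is nowhere zero on $M$. Indeed, suppose this is known. Since $\xi\mapsto G_{g,\mathbb{Q}}(\xi,\eta)-\rho_{g,\mathbb{Q}}(\xi,\eta)$ extends to a $C^{\infty}$ function on a neighbourhood of the diagonal by Proposition \ref{p3.7}, the function $\mathcal{A}_{g,\mathbb{Q}}$ is then smooth and positive, so that $can(g,\mathbb{Q})=\mathcal{A}_{g,\mathbb{Q}}^{2}g=\bigl(\mathcal{A}_{g,\mathbb{Q}}^{(Q-2)/2}\bigr)^{4/(Q-2)}g$ is a genuine Carnot-Carath\'eodory metric on the same horizontal distribution $H$ and is compatible with the same bundle $\mathbb{Q}$; hence $(M,can(g,\mathbb{Q}),\mathbb{Q})$ is a qc manifold, which is well defined and depends only on $([g],\mathbb{Q})$ by the preceding theorem. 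Finally $can(g,\mathbb{Q})$ lies in the conformal class $[g]$, and being spherical is a property of the conformal class: if a local chart realises $g$ as conformal to an open subset of $\mathscr{H}^{n}$, the same chart realises any $\psi^{4/(Q-2)}g$ with $\psi>0$ as conformal to $\mathscr{H}^{n}$, just by composing the two conformal factors. Therefore $(M,can(g,\mathbb{Q}),\mathbb{Q})$ is spherical.

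It remains to prove $\mathcal{A}_{g,\mathbb{Q}}(\xi)\ne0$ for every $\xi\in M$, and the strategy is to identify $\mathcal{A}_{g,\mathbb{Q}}(\xi)^{Q-2}$ with a positive multiple of the qc mass $A_{g,\mathbb{Q}}(\xi)$. Fix $\xi$ and let $C_{\xi}$ be a local qc diffeomorphism with $C_{\xi}(\xi)=\infty$ as in the qc positive mass conjecture, so that $(C_{\xi}^{-1})^{*}\bigl(G_{g,\mathbb{Q}}(\xi,\cdot)^{4/(Q-2)}g\bigr)=h^{4/(Q-2)}g_{0}$ with $h(\zeta)=1+A_{g,\mathbb{Q}}(\xi)\|\zeta\|^{-Q+2}+O(\|\zeta\|^{-Q+1})$ near $\infty$. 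Set $\psi:=R\circ C_{\xi}$, where $R$ is the inversion \eqref{44}; this is a coordinate chart near $\xi$ with $\psi(\xi)=0$, in which $g=\psi^{*}\bigl(\Phi^{4/(Q-2)}g_{0}\bigr)$ for some positive function $\Phi$ (the computation below only uses the rotation-insensitive facts $R^{*}g_{0}=\|\cdot\|^{-4}g_{0}$, from Corollary \ref{333}, and $\|R(\zeta)\|=\|\zeta\|^{-1}$). Applying $R^{*}$ to the relation above and comparing conformal factors through the power $4/(Q-2)$ gives, in the chart $\psi$ and with $\eta\leftrightarrow\zeta$,
\begin{align*}
G_{g,\mathbb{Q}}(\xi,\eta)=\frac{h(R(\zeta))}{\Phi(\zeta)\,\|\zeta\|^{Q-2}}=\frac{1}{\Phi(\zeta)\,\|\zeta\|^{Q-2}}+\frac{A_{g,\mathbb{Q}}(\xi)}{\Phi(\zeta)}+O(\|\zeta\|)\qquad\text{as }\zeta\to0.
\end{align*}
Since $\rho_{g,\mathbb{Q}}(\xi,\eta)=C_{Q}\,\Phi(0)^{-1}\Phi(\zeta)^{-1}\|\zeta\|^{-Q+2}$ in this chart by \eqref{222}, the requirement from Proposition \ref{p3.7} that $G_{g,\mathbb{Q}}(\xi,\cdot)-\rho_{g,\mathbb{Q}}(\xi,\cdot)$ be smooth forces $\Phi(0)=C_{Q}$, and then
\begin{align*}
\mathcal{A}_{g,\mathbb{Q}}(\xi)^{Q-2}=\Bigl|\lim_{\eta\to\xi}\bigl(G_{g,\mathbb{Q}}(\xi,\eta)-\rho_{g,\mathbb{Q}}(\xi,\eta)\bigr)\Bigr|=\frac{A_{g,\mathbb{Q}}(\xi)}{C_{Q}};
\end{align*}
here I use that, by the preceding theorem, $\mathcal{A}_{g,\mathbb{Q}}(\xi)$ may be evaluated in any chart, in particular in $\psi$.

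Finally apply part 2 of the qc positive mass conjecture: $A_{g,\mathbb{Q}}(\xi)\ge0$, and it vanishes exactly when $(M,g,\mathbb{Q})$ is qc equivalent to the standard sphere. Since by hypothesis $M$ is not qc equivalent to the standard sphere, $A_{g,\mathbb{Q}}(\xi)>0$, hence $\mathcal{A}_{g,\mathbb{Q}}(\xi)=\bigl(A_{g,\mathbb{Q}}(\xi)/C_{Q}\bigr)^{1/(Q-2)}>0$ at every $\xi\in M$, and the corollary follows from the first paragraph. I expect the middle step to be the main obstacle: one must carry the conformal factor $\|\cdot\|^{-4}$ of the inversion $R$ through the weighted power $4/(Q-2)$ and keep track of $\Phi$, so as to see that the non-singular part of $G_{g,\mathbb{Q}}(\xi,\cdot)$ at $\xi$ is precisely $A_{g,\mathbb{Q}}(\xi)/C_{Q}$ --- in particular that it has the sign of $A_{g,\mathbb{Q}}(\xi)$, so the absolute value in the definition \eqref{1.3} of $\mathcal{A}_{g,\mathbb{Q}}$ causes no loss. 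The normalisation $\Phi(0)=C_{Q}$ is forced by the normalisation $h\to1$ at $\infty$ built into the conjecture, and the lower-order terms $O(\|\zeta\|^{-Q+1})$ in $h$ contribute only to the $O(\|\zeta\|)$ error and are harmless.
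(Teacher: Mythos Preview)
Your proof is correct and follows essentially the same strategy as the paper's: both identify the constant term $\lim_{\eta\to\xi}\bigl(G_{g,\mathbb{Q}}(\xi,\eta)-\rho_{g,\mathbb{Q}}(\xi,\eta)\bigr)$ with a positive multiple of the qc mass $A_{g,\mathbb{Q}}(\xi)$ and then invoke part~2 of the conjecture.

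The only real difference is the direction of the computation. The paper starts from a given spherical chart with conformal factor $\phi$, expands $G_{g,\mathbb{Q}}(\xi,\cdot)^{4/(Q-2)}g$ near $\xi$, and then \emph{constructs} an explicit $C_\xi=R\circ D_{r}\circ\tau_{\xi^{-1}}$ (with $r$ chosen so that the leading coefficient normalises to $1$), reading off the qc mass as $\beta(\xi)r^{-Q+2}$. You instead take $C_\xi$ as given by part~1 of the conjecture, set $\psi=R\circ C_\xi$ to obtain a chart centred at $\xi$, and pull the asymptotics of $h$ back through the inversion to recover the expansion of $G_{g,\mathbb{Q}}(\xi,\cdot)$; the normalisation $\Phi(0)=C_Q$ then falls out of matching singular parts rather than being imposed by a choice of dilation parameter. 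Both routes yield the same identity (qc mass)$=C_Q\cdot\lim(G-\rho)$. The paper's version has the minor advantage of exhibiting a concrete $C_\xi$, thereby verifying part~1 of the conjecture along the way; your version treats part~1 as a black box, which is perfectly legitimate since the corollary assumes the full conjecture.
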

\begin{proof}
Write $g=\phi^{\frac{4}{Q-2}}g_{0}$ locally. Note that by Proposition \ref{1.10}, \ref{p3.7} and   \ref{p3.8}, we have
\begin{equation*}
\begin{aligned}
\left.G_{g,\mathbb{Q}}(\xi,\eta)^{\frac{4}{Q-2}}g\right|_{\eta}&
=\left.\left\{\rho_{g,\mathbb{Q}} (\xi,\eta)+A_{g,\mathbb{Q}}
(\xi)+O(\|\xi^{-1}\eta\|)\right\}^{\frac{4}{Q-2}}g\right|_{\eta}\\
&=\frac{\alpha(\xi)}{\|\xi^{-1}\eta\|^{4}}\left.\cdot\left\{1+\beta(\xi)
\|\xi^{-1}\eta\|^{Q-2}+O(\|\xi^{-1}\eta\|^{Q-1})\right\}^{\frac{4}{Q-2}}
g_{0}\right|_{\eta},
\end{aligned}
\end{equation*}
for  $\eta$ near $\xi,$ where $$\alpha(\xi)=\left(\frac{C_{Q}}{\phi(\xi)}\right)^{\frac{4}{Q-2}},\quad
\beta(\xi)=\frac{A_{g,\mathbb{Q}}(\xi)\phi(\xi)^{2}}{C_{Q}}.$$
Now choose the conformal qc mapping $C_{\xi}:=R\circ D_{r}\circ\tau_{\xi^{-1}}$ on a neighborhood of $\xi$  to $\mathscr{H}^{n}$ as required in the qc
positive mass conjecture, where $r=\alpha(\xi)^{-\frac{1}{2}}$. It is easy to see that $C_{\xi}(\xi)=\infty$ and
\begin{equation*}
\begin{aligned}
\left(C_{\xi}^{-1}\right)^{*}\left.\left(G_{g,\mathbb{Q}}
(\xi,\eta)^{\frac{4}{Q-2}}g\right)\right|_{\tilde{\eta}}&=
R^{*}\circ D_{r^{-1}}^{*}\circ\tau_{\xi}^{*}\left.\left(G_{g,\mathbb{Q}}
(\xi,\eta)^{\frac{4}{Q-2}}g\right)\right|_{\tilde{\eta}}\\
&=\left(1+\beta(\xi)r^{-Q+2}
\|\tilde{\eta}\|^{-Q+2}+O(\|\tilde{\eta}\|^{-Q+1})\right)^{\frac{4}{Q-2}}
g_{0}|_{\tilde{\eta}},
\end{aligned}
\end{equation*}
by Corollary \ref{333}, where $\tilde{\eta}=C_{\xi}(\eta)$.
The qc positive mass conjecture promises $\beta(\xi)r^{-Q+2}$ to be positive. So, $\mathcal{A}_{g,\mathbb{Q}}$ is non-vanishing.
\end{proof}
\begin{rem}
Let $(M,g,\mathbb{Q})$ and $(\tilde{M},\tilde{g},\tilde{\mathbb{Q}})$ be two connected, compact, scalar positive spherical qc manifolds with ${\rm
dim}M={\rm dim}\tilde{M}$, which are both not qc equivalent to the standard sphere $(S^{4n+3},g_{S},\mathbb{Q}_{S}).$ Suppose
$f:(M,g,\mathbb{Q})\rightarrow(\tilde{M},\tilde{g},\tilde{\mathbb{Q}})$ be a locally qc diffeomorphism. Then,
\begin{align*}
f^{*}can(\tilde{g},\tilde{\mathbb{Q}})(X,Y)\geq can({g},{\mathbb{Q}})(X,Y),
\end{align*}
for any tangent vector $X,Y\in TM$. This can be shown as in the CR case {\rm (cf. Proposition 3.4 in \cite{wang1})}.
\end{rem}

\subsection{Scalar positivity of the connected sum of two scalar positive spherical qc manifolds}
Scalar positive spherical qc manifolds are abundant by the following proposition.
\begin{prop}\label{51}
If $t$ is sufficiently small, the connected sum $(M_{t,\sigma,A},g,\mathbb{Q}_{t,\sigma,A})$ is scalar positive.
\end{prop}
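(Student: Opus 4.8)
The plan is to show that the qc Yamabe invariant $\lambda(M_{t,\sigma,A},g,\mathbb{Q}_{t,\sigma,A})$ is positive once $t$ is small; by the discussion following Proposition \ref{2.1} this is equivalent to scalar positivity, and since $\lambda$ depends only on the conformal class I may work with any convenient representative metric. Since $M_{(1)},M_{(2)}$ are scalar positive we have $\lambda(M_{(i)},g_{(i)},\mathbb{Q}_{(i)})>0$, and $\lambda(\mathscr{H}^{n},g_0,\mathbb{Q}_0)>0$ was recalled above; set $\lambda_*:=\min\{\lambda(M_{(1)},g_{(1)},\mathbb{Q}_{(1)}),\lambda(M_{(2)},g_{(2)},\mathbb{Q}_{(2)}),\lambda(\mathscr{H}^{n},g_0,\mathbb{Q}_0)\}>0$. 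Write $M_{t,\sigma,A}=W_1\cup N\cup W_2$, where $W_i\subset M_{(i)}$ is the $t$-independent region $M_{(i)}\setminus U_i$ together with the fixed collar $\{1\le\|\cdot\|<2\}$ of $U_i$, and the neck $N=U_1(t,1)=U_2(t,1)$ is identified via $\psi_2$ with the Heisenberg annulus $\{t<\|\zeta\|<1\}\subset\mathscr{H}^{n}$. Thus, as a conformal qc manifold, $N$ is a subdomain of $\mathscr{H}^{n}$; moreover it is conformally a cylinder over $\{\|\zeta\|=1\}$ of length $\log(1/t)\to\infty$. I equip $M_{t,\sigma,A}$ with the metric $g$ equal to $g_{(i)}$ on $W_i$ and to the cylinder metric $g_{\mathrm{cyl}}=\|\zeta\|^{-2}g_0$ on $N$, matched within the conformal class by interpolation over the two fixed collars near the ends of $N$.

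The estimate itself is an IMS-type localization. Denote by $E(v):=\int_{M_{t,\sigma,A}}\big(b_n|\nabla_g v|^2+s_{g,\mathbb{Q}}\,v^2\big)\,dV_{g,\mathbb{Q}}$ the numerator of the qc Yamabe quotient for the chosen metric $g$. Fix cut-offs $\{\chi_1,\chi_N,\chi_2\}$ with $\chi_1^2+\chi_N^2+\chi_2^2\equiv1$, with $\mathrm{supp}\,\chi_i$ ($i=1,2$) contained in $W_i$ together with the half of $N$ adjacent to $W_i$, with $\mathrm{supp}\,\chi_N\subset N$, and with each $\chi_j$ interpolating over a portion of $N$ of cylinder length comparable to $\log(1/t)$. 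For any $u>0$ the localization identity gives
\begin{align*}
E(u)=\sum_{j}E(\chi_j u)-b_n\int_{M_{t,\sigma,A}}\Big(\sum_j|\nabla_g\chi_j|^2\Big)u^2\,dV_{g,\mathbb{Q}}.
\end{align*}
Each $\chi_j u$ is compactly supported in a subdomain of $M_{(1)}$, of $M_{(2)}$, or of $\mathscr{H}^{n}$, on which the metric lies in the conformal class of that (closed or model) manifold; since $\lambda$ cannot decrease under passage to a subdomain and is conformally invariant, $E(\chi_j u)\ge\lambda_*\,\|\chi_j u\|_{L^{2Q/(Q-2)}}^2$ for each $j$. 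As $\sum_j\chi_j^{2Q/(Q-2)}\ge c_0>0$ and $x\mapsto x^{(Q-2)/Q}$ is subadditive, the first sum is $\ge c_0^{(Q-2)/Q}\lambda_*\,\|u\|_{L^{2Q/(Q-2)}}^2$.

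For the error term, note that in the cylinder metric $|\nabla_{g_{\mathrm{cyl}}}\log\|\zeta\||$ is bounded and $dV_{g_{\mathrm{cyl}}}$ is comparable to $ds\,d\omega$ on $N$; hence $\sum_j|\nabla_g\chi_j|^2=O\big((\log(1/t))^{-2}\big)$ on $N$ while $\mathrm{Vol}_{g_{\mathrm{cyl}}}(\mathrm{supp}\,\nabla\chi_j)=O(\log(1/t))$, and H\"older's inequality on $N$ yields $b_n\int(\sum_j|\nabla_g\chi_j|^2)u^2\,dV_{g,\mathbb{Q}}\le C\,(\log(1/t))^{\frac{2}{Q}-2}\,\|u\|_{L^{2Q/(Q-2)}}^2\to0$. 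Combining, for $t$ small,
\begin{align*}
E(u)&\ge\Big(c_0^{(Q-2)/Q}\lambda_*-C(\log(1/t))^{\frac{2}{Q}-2}\Big)\|u\|_{L^{2Q/(Q-2)}}^2\\
&\ge\tfrac12\,c_0^{(Q-2)/Q}\lambda_*\,\|u\|_{L^{2Q/(Q-2)}}^2,
\end{align*}
so $\lambda(M_{t,\sigma,A},g,\mathbb{Q}_{t,\sigma,A})>0$ and the connected sum is scalar positive.

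The main obstacle is exactly the neck: one must know both that the thin neck contributes a $t$-uniform positive amount to the Yamabe functional and that the cut-off errors vanish as $t\to0$. Both are supplied by the two conformal features of $N$ — that it is a subdomain of $\mathscr{H}^{n}$, so $\lambda(N)\ge\lambda(\mathscr{H}^{n},g_0,\mathbb{Q}_0)>0$ uniformly in $t$, and that it is conformally a cylinder whose length tends to $\infty$, leaving room for cut-offs with gradient $O((\log(1/t))^{-1})$ in a metric of uniformly bounded local volume. This is the qc analogue of the neck analysis in the Gromov--Lawson / Schoen--Yau surgery construction and of the corresponding estimate for locally conformally flat and spherical CR manifolds.
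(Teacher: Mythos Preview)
Your argument is correct and takes a genuinely different route from the paper's. The paper follows Kobayashi's surgery estimate: it picks a near-minimizer $f_l$ on $M_t$, finds via pigeonhole a single slice $\{l_*\}\times\Sigma^n$ on the cylindrical neck where the sliced energy is $O(1/l)$, and then cuts $f_l$ off over a \emph{unit}-length interval at that slice to produce a compactly supported test function $F_l$ on the disjoint union $M_1\sqcup M_2$; this yields $\lambda(M_1\sqcup M_2)\le \lambda(M_t)+B/l$, hence $\lambda(M_t)\ge \min\{\lambda(M_1),\lambda(M_2)\}-B/l>0$ for $t$ small. Your IMS localization instead spreads the cut-offs over the entire neck and controls the middle piece $\chi_N u$ directly by the Yamabe invariant of $\mathscr{H}^n$, so no good-slice selection is needed and the error is absorbed through the smallness of $\|\nabla\chi_j\|_{L^{Q/2}}$.

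What each buys: the paper's good-slice method gives the sharp asymptotic $\liminf_{t\to 0}\lambda(M_t)\ge\min\{\lambda(M_1),\lambda(M_2)\}$ with no extraneous constant, and does not invoke $\lambda(\mathscr{H}^n)$. Your approach is more elementary (no pigeonhole, no slice analysis) and makes the mechanism transparent, at the cost of the factor $c_0^{(Q-2)/Q}<1$ in front of $\lambda_*$ and of bringing in $\lambda(\mathscr{H}^n)$; since only positivity is claimed here, that loss is harmless. A minor point worth stating explicitly is that the Yamabe quotient, and not just $\lambda$, is conformally invariant, so computing $E(\chi_j u)$ in your glued metric $g$ on a region that embeds conformally into $M_{(j)}$ (resp.\ $\mathscr{H}^n$) really does yield a quotient bounded below by $\lambda(M_{(j)})$ (resp.\ $\lambda(\mathscr{H}^n)$); you used this implicitly when passing between $g$, $g_{(i)}$, and $g_0$.
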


Proposition \ref{51} follows from the following proposition.
\begin{prop}
If $t$ is sufficiently small, we have $\lambda(M_{t,\sigma,A},g,\mathbb{Q}_{t,\sigma,A})>0.$
\end{prop}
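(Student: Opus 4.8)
The plan is to show directly that the qc Yamabe quotient on $M_{t,\sigma,A}$ is bounded below by a fixed positive number once $t$ is small. The idea is to decompose an arbitrary positive test function into two pieces carried by the two \emph{closed} spherical qc manifolds out of which the connected sum is built --- on each of which the qc Yamabe invariant is positive by scalar positivity (cf. the discussion after (\ref{4.7})) --- and to show that the ``price'' of the decomposition, measured by the conformally invariant quantity $\int|\nabla\,\cdot\,|^{Q}\,dV$ of a logarithmic cut-off supported in the neck, tends to $0$ as $t\to 0$. This is the qc analogue of the gluing estimates in conformal geometry (see \cite{yau}) and in CR geometry (\cite{wang1}).

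Set $p:=\frac{2Q}{Q-2}$ and write $E(v):=\int_{M_{t,\sigma,A}}\bigl(b_{n}|\nabla_{g}v|^{2}+s_{g,\mathbb{Q}}v^{2}\bigr)\,dV_{g,\mathbb{Q}}$ for the qc Yamabe numerator. Let $\bar{M}_{1},\bar{M}_{2}$ be the compact spherical qc manifolds entering the construction of \S\ref{s2.5} (with $\bar{M}_{1}=\bar{M}_{2}=M$ in the case of a single $M$ with two punctures), and put $\Lambda_{0}:=\min\{\lambda(\bar{M}_{1},g_{(1)},\mathbb{Q}),\lambda(\bar{M}_{2},g_{(2)},\mathbb{Q})\}$, which is positive since each piece is scalar positive. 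Fix $\epsilon\in(0,1)$ and take $0<t<\epsilon^{2}$. In the chart $\psi_{1}\colon U_{1}\to B_{\mathscr{H}}(0,2)$ the neck $U_{1}(t,1)$ of $M_{t,\sigma,A}$ is the ring $\{t<\|\zeta\|<1\}\subset\mathscr{H}^{n}$, and by Corollary \ref{333} the gluing map $\Psi_{t,\sigma,A}=\psi_{2}^{-1}\circ D_{t}\circ R\circ\sigma\circ A\circ\psi_{1}$ carries $\|\zeta\|=r$ to $\psi_{2}$-radius $t/r$. Choose $\psi\in C^{\infty}(\mathbb{R})$ with $\psi\equiv0$ on $(-\infty,\tfrac14]$, $\psi\equiv1$ on $[\tfrac34,\infty)$, $0\le\psi\le1$, put $L_{t}(\zeta):=\frac{\log(\epsilon/\|\zeta\|)}{\log(\epsilon^{2}/t)}$ on the annulus $\{t/\epsilon<\|\zeta\|<\epsilon\}$, and define on $M_{t,\sigma,A}$
\begin{equation*}
\eta_{1}^{(t)}:=\cos\!\Bigl(\tfrac{\pi}{2}\psi(L_{t})\Bigr),\qquad
\eta_{2}^{(t)}:=\sin\!\Bigl(\tfrac{\pi}{2}\psi(L_{t})\Bigr),
\end{equation*}
extended by $(\eta_{1}^{(t)},\eta_{2}^{(t)})=(1,0)$ where $\|\zeta\|\ge\epsilon$ (the $\bar{M}_{1}$-side) and by $(0,1)$ where $\|\zeta\|\le t/\epsilon$ (the $\bar{M}_{2}$-side). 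These are smooth on $M_{t,\sigma,A}$ with $(\eta_{1}^{(t)})^{2}+(\eta_{2}^{(t)})^{2}\equiv1$. The function $\eta_{i}^{(t)}u$ vanishes near the $i$-th puncture, hence extends by $0$ to a nonnegative function $v_{i}$ on $\bar{M}_{i}$; since $\eta_{i}^{(t)}u$ is supported in $M_{i}\setminus\overline{U_{i}(t)}$, where the glued metric $g$ lies in the spherical conformal class --- which on the neck is that of $\psi_{1}^{*}g_{0}$ and off the neck is that of $g_{(i)}$, hence conformal to $g_{(i)}$ --- we fix a metric $\hat{g}_{i}\in[g_{(i)}]$ on $\bar{M}_{i}$ agreeing with $g$ on the support of $\eta_{i}^{(t)}u$.

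Now take any $u>0$ on $M_{t,\sigma,A}$. Because $\eta_{1}^{(t)}\nabla\eta_{1}^{(t)}+\eta_{2}^{(t)}\nabla\eta_{2}^{(t)}=\tfrac12\nabla\bigl((\eta_{1}^{(t)})^{2}+(\eta_{2}^{(t)})^{2}\bigr)=0$, the cross terms cancel and one gets the identity
\begin{equation*}
E\bigl(\eta_{1}^{(t)}u\bigr)+E\bigl(\eta_{2}^{(t)}u\bigr)=E(u)+b_{n}\int_{M_{t,\sigma,A}}\bigl(|\nabla_{g}\eta_{1}^{(t)}|^{2}+|\nabla_{g}\eta_{2}^{(t)}|^{2}\bigr)u^{2}\,dV_{g,\mathbb{Q}}.
\end{equation*}
On the left, $E(\eta_{i}^{(t)}u)$ is the qc Yamabe numerator of $v_{i}$ computed with $\hat{g}_{i}$ on $\bar{M}_{i}$ (scalar curvature being local), so $E(\eta_{i}^{(t)}u)\ge\lambda(\bar{M}_{i},\hat{g}_{i},\mathbb{Q})\|\eta_{i}^{(t)}u\|_{L^{p}(g)}^{2}=\lambda(\bar{M}_{i},g_{(i)},\mathbb{Q})\|\eta_{i}^{(t)}u\|_{L^{p}(g)}^{2}\ge\Lambda_{0}\|\eta_{i}^{(t)}u\|_{L^{p}(g)}^{2}$, using the conformal invariance of the qc Yamabe invariant and the fact that the Yamabe quotient bound extends from positive to nonnegative functions; moreover, from $(\eta_{1}^{(t)})^{p}+(\eta_{2}^{(t)})^{p}\ge2^{1-p/2}$ pointwise and $a^{2}+b^{2}\ge(a^{p}+b^{p})^{2/p}$ for $a,b\ge0$, $p\ge2$, we get $\|\eta_{1}^{(t)}u\|_{L^{p}(g)}^{2}+\|\eta_{2}^{(t)}u\|_{L^{p}(g)}^{2}\ge2^{2/p-1}\|u\|_{L^{p}(g)}^{2}$. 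On the right, H\"older's inequality and the conformal invariance of $\int|\nabla\,\cdot\,|^{Q}\,dV$ on a qc manifold give
\begin{equation*}
\int_{M_{t,\sigma,A}}\bigl(|\nabla_{g}\eta_{1}^{(t)}|^{2}+|\nabla_{g}\eta_{2}^{(t)}|^{2}\bigr)u^{2}\,dV_{g}\le\Bigl(\|\nabla_{g_{0}}\eta_{1}^{(t)}\|_{L^{Q}(g_{0})}^{2}+\|\nabla_{g_{0}}\eta_{2}^{(t)}\|_{L^{Q}(g_{0})}^{2}\Bigr)\|u\|_{L^{p}(g)}^{2},
\end{equation*}
the $L^{Q}$-norms being computed with $g_{0}$ in the chart $\psi_{1}$; since $|\nabla_{g_{0}}\eta_{i}^{(t)}|_{g_{0}}\le\frac{C}{\|\zeta\|\log(\epsilon^{2}/t)}$ on the annulus (the horizontal gradient of the gauge $\|\cdot\|$ being bounded) while $\int_{\{t/\epsilon\le\|\zeta\|\le\epsilon\}}\|\zeta\|^{-Q}\,dV_{0}=c_{0}\log(\epsilon^{2}/t)$, we obtain $\|\nabla_{g_{0}}\eta_{i}^{(t)}\|_{L^{Q}(g_{0})}^{Q}\le C'\bigl(\log(\epsilon^{2}/t)\bigr)^{-(Q-1)}$. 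Combining the three estimates,
\begin{equation*}
E(u)\ \ge\ \Bigl(\Lambda_{0}\,2^{2/p-1}-C''\bigl(\log(\epsilon^{2}/t)\bigr)^{-2(Q-1)/Q}\Bigr)\|u\|_{L^{p}(g)}^{2},
\end{equation*}
and for $t$ small the bracket exceeds $\tfrac12\Lambda_{0}2^{2/p-1}>0$. Taking the infimum over $u>0$ yields $\lambda(M_{t,\sigma,A},g,\mathbb{Q}_{t,\sigma,A})\ge\tfrac12\Lambda_{0}2^{2/p-1}>0$.

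The only delicate point is the bookkeeping in the third paragraph: one must check that the two pieces $v_{i}$ genuinely live on the \emph{closed} manifolds $\bar{M}_{i}$ with metrics in the conformal class $[g_{(i)}]$, so that the positivity $\lambda(\bar{M}_{i},g_{(i)},\mathbb{Q})>0$ may legitimately be invoked. The analytic core --- that the neck contributes negligibly --- is automatic: it is forced by the neck having length $\sim\log(1/t)\to\infty$ on the logarithmic scale, together with $Q>2$ and the conformal invariance of $\int|\nabla\,\cdot\,|^{Q}\,dV$.
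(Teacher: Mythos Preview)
Your proof is correct, but the route is genuinely different from the paper's. The paper follows Kobayashi \cite{Kobayashi}: it chooses the special metric $g_{0}/\|\xi\|^{2}$ on the neck so that the connected sum becomes two compact pieces joined by a cylinder $[0,l]\times\Sigma^{n}$ with $l=\ln(1/t)$; it then takes a near-minimizer $f_{l}$ of the Yamabe quotient on $M_{t}$, uses the pigeonhole principle to find a slice $\{l_{*}\}\times\Sigma^{n}$ on which the energy density is $O(1/l)$, and cuts $f_{l}$ off linearly across that slice to produce a test function $F_{l}$ on the disjoint union $M_{0}$. This yields $\lambda(M_{0})\le\lambda(M_{t})+B/l$, hence positivity for large $l$.

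Your argument runs in the opposite direction: rather than transporting a near-minimizer from $M_{t}$ to the pieces, you decompose an \emph{arbitrary} test function on $M_{t}$ via the IMS-type partition $(\eta_{1}^{(t)})^{2}+(\eta_{2}^{(t)})^{2}=1$ with a logarithmic cut-off, push each piece to the closed manifold $\bar M_{i}$, and invoke $\lambda(\bar M_{i})>0$ directly. The key analytic step --- that the commutator term $\int|\nabla\eta_{i}^{(t)}|^{2}u^{2}$ is small --- is handled by H\"older together with the conformal invariance of $\int|\nabla\,\cdot\,|^{Q}\,dV$, which lets you compute the $L^{Q}$ norm of $\nabla\eta_{i}^{(t)}$ in the flat metric $g_{0}$ and get the $(\log(1/t))^{-(Q-1)}$ decay. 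This avoids the special cylindrical metric and the good-slice search entirely. The paper's method gives the slightly sharper asymptotic statement $\lambda(M_{0})\le\liminf\lambda(M_{t})$; yours gives an explicit uniform lower bound $\lambda(M_{t})\ge\tfrac12\Lambda_{0}\,2^{2/p-1}$ for $t$ small, which is exactly what is needed here and is arguably cleaner for the stated purpose.
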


\begin{proof}
See  \cite{Kobayashi} for the Riemannian case. We provide more details that are different from the Riemannian case, compared to the proof of the CR case in \cite{wang1}.  Let
\begin{align*}
M_{0}=M_{1}\backslash\{\xi_{1}\}\cup M_{2}\backslash\{\xi_{2}\},
\end{align*}
and let $\hat{g}$ be a spherical qc metric on $M_{0}$. Then, by multiplying a positive function $\mu\in C^{\infty}(M)\backslash\{\xi_{1},\xi_{2}\}$, we
can assume $g=\mu\hat{g}$ satisfying
\begin{align*}
\left(\psi_{i}^{-1}\right)^{*}g|_{\xi}=\frac{g_{0}|_{\xi}}{\|\xi\|^{2}}\quad {\rm on} \quad B_{\mathscr{H}}(0,2)\backslash\{0\},
\end{align*}
where $\psi_{i}:U_{i}\rightarrow B_{\mathscr{H}}(0,2),\ i=1,2,$ are coordinate charts in  $(\ref{265}).$ It is easy to see that gluing mapping
$\Psi_{t,\sigma,A}$ in (\ref{2.60}) preserves the metric $\frac{g_{0}|_{\xi}}{\|\xi\|^{2}}$ on $t<\|\xi\|<\frac{1}{t}$, $0<\frac{2}{3}t<1$, by the
transformation law and Corollary \ref{333}. $\frac{g_{0}|_{\xi}}{\|\xi\|^{2}}$ is invariant under the rotation $A,$  transformation $\sigma$ and the
inversion $R.$ Hence we can glue $g$ by $\Psi_{t,\sigma,A}$ to obtain a spherical qc metric that coincides with $g$  on $M_{1}\backslash \overline{U_{1}(t)}\cup
M_{2}\backslash \overline{U_{2}(t)}.$  We denote the resulting qc metric also by $g$ by abuse of notations. We denote the connected sum by
$(M_{t},g,\mathbb{Q}_{t}).$ Here we omit the subscripts $\sigma$ and $A$ for simplicity. The scalar curvature of $g$ on
$B_{\mathscr{H}}(0,1)\backslash\{0\}$ is  $$s_{g,\mathbb{Q}}(\xi)=\frac{(Q-2)(Q+2)}{2}\cdot\frac{|y|^{2}}{\|\xi\|^{2}},\quad {\rm for}\
\xi=(y,t)\in\mathscr{H}^{n},$$ by  (\ref{8800}) in Appendix.

$(M_{0},g,\mathbb{Q})$ has two cylindrical ends. We can identify the ball with cylindrical end by the mapping
\begin{equation}
\begin{aligned}
\phi:\quad B(0,1)&\longrightarrow[0,\infty)\times\Sigma^{n}\\
\xi=D_{e^{-u}}(\eta)&\longmapsto\left(\ln\frac{1}{\|\xi\|},\frac{\xi}{\|\xi\|}\right)
=(u,\eta),
\end{aligned}
\end{equation}
where $\Sigma^{n}=\{\eta\in\mathscr{H}^{n};\|\eta\|=1\}$ is diffeomorphic to the sphere $S^{4n+2}.$ 
Define a Carnot-Carath\'eodory metric 
$$\left.\tilde{g}\right|_{\xi}=(\phi^{-1})^{*}\left(\frac{g_{0}
|_{\xi}}{\|\xi\|^{2}}\right)$$
on $[0,\infty)\times\Sigma^{n}$ and  $\tilde{\Theta}=(\tilde{\theta}_{1},\tilde{\theta}_{2},\tilde{\theta}_{3})$ is a compatible contact form.
$(B_{\mathscr{H}}(0,1)\backslash\{0\},\frac{g_{0}}{\|\xi\|^{2}},\mathbb{Q})$ is qc equivalent to $([0,\infty)\times\Sigma^{n},\tilde{g},\mathbb{Q})$.
Since $$(\psi_{i}^{-1})^{*}dV_{{g},\mathbb{Q}}=\frac{dV_{0}}{\|\xi
\|^{Q}}$$ is invariant under rescaling, it is easy to see that the measure $\tilde{\theta}_{1}\wedge\tilde{\theta}_{2}\wedge\tilde{\theta}_{3}
\wedge(d\tilde{\theta}_{s})^{2n}$ is invariant under translation $(u',\xi)\rightarrow(u'+u_{0},\xi)$ on $[0,\infty)\times\Sigma^{n}$. As a measure, we have 
\begin{align}\label{4.12}
\tilde{\theta}_{1}\wedge\tilde{\theta}_{2}
\wedge\tilde{\theta}_{3}\wedge(d\tilde{\theta}_{s})^{2n}
=dudS_{\Sigma^{n}},
\end{align}
where $dS_{\Sigma^{n}}$ is a measure on $\Sigma^{n}$. set
 $$l=\ln\frac{1}{t},$$
and write
\begin{align*}
(M_{0},g,\mathbb{Q})=([0,\infty)\times\Sigma^{n},\tilde{g},\mathbb{Q})
\cup(\hat{M},g,\mathbb{Q})
\cup([0,\infty)\times\Sigma^{n},\tilde{g},\mathbb{Q}),
\end{align*}
where $\hat{M}=M\backslash (U_{1}(1)\cup U_{2}(1))$. We identify two pieces of  $ (0,l)\times\Sigma^{n},\tilde{g},\mathbb{Q})$  to get
$(M_{t},g,\mathbb{Q}_{t})$.

Denote by $y_{\eta}$ the $y$-coordinate of $\eta\in
\Sigma^{n},$ i.e. we can write $\eta=(y_{\eta},t_{\eta})\in\mathscr{H}^{n}$ for some $t_{\eta}\in\mathbb{R}^{3}.$ Then,
\begin{align}\label{4.4}
|\nabla_{\tilde{g}}u|=\left|\nabla_{g}\left(\ln\frac{1}{\|\xi\|}\right)\right|=\frac 12 |y_{\xi}|^{2}
=\frac 12e^{-2u}|y_{\eta}|^{2},
\end{align}
where $\xi=(y_\xi,t_\xi)$ for some $t_\xi,$ and
\begin{align}\label{4.5}
s_{\tilde{g},\mathbb{Q}}(u,\eta)=\frac{(Q-2)(Q+2)}{2}{|y_{\eta}|^{2}},
\end{align}
by  (\ref{8899}) and (\ref{8800}) in the appendix.
By the definition of the Yamabe invariant $\lambda(M_{t},g,\mathbb{Q}_{t})$, we can find a positive function $f_{l}\in C^{\infty}(M_{t})$ such that
\begin{align}\label{4.14}
\int_{M_{t}}\left(b_{n}|\nabla_{g}f_{l}|^{2}
+s_{g,\mathbb{Q}_{t}}f_{l}^{2}\right)
dV_{g,\mathbb{Q}_{t}}
<\lambda(M_{t},g,\mathbb{Q}_{t})+\frac{1}{l},
\end{align}
 and
\begin{align}\label{4.15}
\int_{M_{t}}f_{l}^{\frac{2Q}{Q-2}}dV_{g,\mathbb{Q}_{t}}=1.
\end{align}
Put $A_{1}=-\min\left\{0,\min_{x\in \hat{M}}s_{\hat{g},\mathbb{Q}}\right\}{\rm Vol}(\hat{M})^{\frac{4}{Q-2}},$ which is uniformly bounded by ${\rm
Vol}(M,g)$. Thus  by using H\"older's inequality we get from (\ref{4.14}) that
$$\int_{[0,l]\times\Sigma^{n}}\left(b_{n}|
\nabla_{\tilde{g}}f_{l}|^{2}
+s_{\tilde{g},\mathbb{Q}_{t}}f_{l}^{2}\right)
dudS_{\Sigma^{n}}<\lambda(M_{t},g,\mathbb{Q}_{t})+
\frac{1}{l}+A_{1},$$
(cf. Lemma 6.2 in \cite{Kobayashi}). Note that $s_{\tilde{g},\mathbb{Q}_{t}}$ is nonnegative on $[0,\infty)\times\Sigma^{n}$ by (\ref{4.5}). Therefore,
there exists $l_{*}\in[0,l]$ such that
$$\int_{l_{*}\times\Sigma^{n}}\left(b_{n}|
\nabla_{\tilde{g}}f_{l}|^{2}
+s_{\tilde{g},\mathbb{Q}_{t}}f_{l}^{2}\right)
d{S_{\Sigma^{n}}}<\frac{\lambda(M_{t},g,\mathbb{Q}_{t})+
\frac{1}{l}+A_{1}}{l},$$
i.e. we have the estimate
\begin{align}\label{4.16}
\int_{\Sigma^{n}}\left(|
\nabla_{\tilde{g}}f_{l}(l_{*},\eta)|^{2}+|y_{\eta}|^{2}
f_{l}^{2}(l_{*},\eta)\right)dS_{\Sigma^{n}}(\eta)<\frac{C}{l},
\end{align}
by the scalar curvature of $g$ in (\ref{4.5}), where $C$ is a constant independent of $l$ (because the qc Yamabe invariants
$\lambda(M_{t},g,\mathbb{Q}_{t})$ for $t>1$ have a uniform upper bound by choosing a test function). It is different from the Riemannian case that the
scalar curvature $\frac{g_{0}|_{\xi}}{\|\xi\|^{2}}$ is not constant. But it is still independent of the variable $u.$ Now define a Lipschitz function
$F_{l}$ on $M_{0}$ by $F_{l}=f_{l}$ on $[0,l_{*})\times\Sigma^{n}\cup\hat{M}\cup[0,l-l_{*})\times\Sigma^{n}$ and
\begin{equation}
F_{l}(u,x)=\left\{
\begin{array}{rcl}
&(l_{*}+1-u)\tilde{f_{l}}(u,x)& \quad {\rm for}\quad(u,x)\in[l_{*},l_{*}+1]\times\Sigma^{n},\\
&0 &\ \ \ {\rm for}\quad(u,x)\in[l_{*}+1,\infty)\times\Sigma^{n},
\end{array}
\right.
\end{equation}
where $\tilde{f_{l}}(u,x)=f_{l}(l_{*},x)$ and similarly on $[l-l_{*} ,\infty)\times\Sigma^{n}$.

By definition, $|\nabla_{\tilde{g}}F_{l}|=|\nabla_{\tilde{g}}f_{l}|$ and  $F_{l}^{2}=f_{l}^{2}$ hold on
$[0,l_{*})\times\Sigma^{n}\cup\hat{M}\cup[0, l-l_{*} )\times\Sigma^{n}.$ On the other hand, note that
$|
\nabla_{\tilde{g}}F_{l}|\leq |
\nabla_{\tilde{g}}u||\tilde{f}_{l}|+|
\nabla_{\tilde{g}}\tilde{f}_{l}|$ pointwisely on $(l_{*},l_{*}+1)\times\Sigma^{n}$ by definition.
By (\ref{4.4}),  (\ref{4.5}) and estimate (\ref{4.16}),
we find that
\begin{equation*}
\begin{aligned}
&\int_{(l_{*},l_{*}+1)\times\Sigma^{n}}\left(b_{n}|
\nabla_{\tilde{g}}F_{l}|^{2}
+s_{\tilde{g},\mathbb{Q}}F_{l}^{2}\right)
dudS_{\Sigma^{n}}\leq C'\int_{(l_{*},l_{*}+1)\times\Sigma^{n}}
\left(|\nabla_{\tilde{g}}\tilde{f}_{l}|^{2}+|y_{\eta}|^{2}
\tilde{f_{l}}^{2}\right)d ud S_{\Sigma^{n}}(\eta)\\&\leq C''\int_{\Sigma^{n}}\left(|\nabla_{\tilde{g}}f_{l}(l_{*},\eta)|^{2}
+|y_{\eta}|^{2}f_{l}^{2}(l_{*},\eta)\right)dS_{\Sigma^{n}}(\eta)\leq\frac{B'}{l}
\end{aligned}
\end{equation*}
by $|\nabla_{\tilde{g}}\tilde{f}_{l}(u,\eta)|
=|\nabla_{\tilde{g}}\tilde{f}_{l}(l_{*},\eta)|\leq
|\nabla_{\tilde{g}}{f}_{l}(l_{*},\eta)|$ pointwisely, since horizontal subspace $H$ and $\tilde{g}$ are invariant under the translation
$(u,\eta)\rightarrow(u+u_{0},\eta)$ and $\tilde{f}_{l}$ is independent of $u.$ Therefore, we get
\begin{align*}
\int_{M_{0}}\left(b_{n}|\nabla_{g} F_{l}|^{2}
+s_{g,\mathbb{Q}}F_{l}^{2}\right)
dV_{g,\mathbb{Q}}<\lambda(M_{t},
g,\mathbb{Q}_{t})+\frac{B}{l},
\end{align*}
for some constant  $B$ independent of $l$.

Obviously from (\ref{4.15}) and the definition of $F_{l},$ we get
\begin{align*}
\int_{M_{0}}F_{l}^{\frac{2Q}{Q-2}}dV_{g,\mathbb{Q}}>1.
\end{align*}
 Therefore,
\begin{align}\label{4.20}
\inf_{F>0}\frac{\int_{M_{0}}\left(b_{n}|\nabla_{g}F|^{2}
+s_{g,\mathbb{Q}}\right)dV_{g,\mathbb{Q}}}{\left(\int_{M_{0}}
F^{\frac{2Q}{Q-2}}dV_{g,\mathbb{Q}}\right)^{\frac{Q-2}{Q}}}<
\lambda(M_{t},g,\mathbb{Q}_{t})+\frac{B}{l},
\end{align}
where the infimum is taken over all nonnegative Lipshitz functions with compact
support. It follows from the definition of the Yamabe invariant that the left side is greater than or equal to
$\lambda(M,g,\mathbb{Q})$. If $(M,g,\mathbb{Q})$ is a disjoint union of $(M_{1},g_{1},\mathbb{Q})$ and $(M_{2},g_{2},\mathbb{Q})$, we have
\begin{align}\label{4.21}
\lambda(M,g,\mathbb{Q})
=min\{\lambda(M_{1},g_{1},\mathbb{Q}),\lambda(M_{2},g_{2},\mathbb{Q})\},
\end{align}
by the definition of the qc Yamabe invariant. From (\ref{4.20}) and (\ref{4.21}), $\lambda(M_{t},g,\mathbb{Q}_{t})$ is positive if $l$ is sufficiently
large, i.e. $t$ is sufficiently small. We  complete the proof.
\end{proof}
\section{The convex cocompact discrete subgroups of ${\rm Sp}(n+1,1)$}
\subsection{Convex cocompact subgroups of ${\rm Sp}(n+1,1)$}
A group $G$ is called \emph{discrete} if the topology on $G$ is the discrete topology.
We say that $G$ acts \emph{discontinuously} on a space $X$ at point $q$ if there is a neighborhood $U$ of $q$, such that $g(U)\cap U=\emptyset$ for all
but finitely many $g\in G$.

Let $\Gamma$ be a discrete subgroup of ${\rm Sp}(n+1,1)$. Choose $q\in B^{4n+4}$.
We define the \emph{limit set} of $\Gamma$ by
\begin{align*}
\Lambda(\Gamma):=\overline{\Gamma{q}}\cap S^{4n+3},
\end{align*}
where $\overline{\Gamma{q}}$ is the closure of the orbit of $q$ under $\Gamma$.  It is known that $\Lambda(\Gamma)$ does not depend on the choice of $q\in
B^{4n+4}$ (cf. Proposition 1.4 and Proposition 2.9 in \cite{Eberlein}).
The limit set $\Lambda(\Gamma)$ of all limit points is closed and invariant under $\Gamma$.
The \emph{radial limit set of $\Gamma$} is
 $$\Lambda^{r}(\Gamma):=\left\{\xi\in\Lambda(\Gamma)\left|
\lim\inf_{T\rightarrow\infty}d(\xi_{T},\gamma(0))<\infty,\gamma\in \Gamma\right.\right\},$$
where $\xi_{T}$ refers to the point on the ray from $0$ to $\xi$ for which $d(0,\xi_{T})=T$ and $d(\cdot,\cdot)$ is the quaternionic hyperbolic distance.
\begin{align*}
\Omega(\Gamma):=S^{4n+3} \backslash\Lambda(\Gamma)
\end{align*}
is the maximal domain in $S^{4n+3}$ on which $\Gamma$ acts properly discontinuously by Proposition 8.5 in \cite{Eberlein}. $\Gamma$ is called a
\emph{Kleinian group} if $\Omega(\Gamma)$ is non empty. A Kleinian group is called \emph{elementary} if $\Lambda(\Gamma)$ contains at most two points. The
Kleinian manifold associated to $\Gamma$ is defined to be
\begin{align*}
\bar{M}_{\Gamma}=(B^{4n+4}\cup\Omega(\Gamma))/\Gamma.
\end{align*}
It is know that $\Omega(\Gamma)/\Gamma$ is a  smooth manifold and
$\bar{M}_{\Gamma}$ is  a manifold with boundary (cf. the proof of Corollary 11.11 in \cite{Eberlein}). $\Gamma$ is called \emph{convex cocompact} if
$\bar{M}_{\Gamma}$ is a compact manifold with boundary. In this case, $\Omega(\Gamma)/\Gamma$ is a compact smooth manifold.

Two basic properties of convex cocompact groups are given in the following proposition.
\begin{prop}{\rm{(cf. p. 528 in \cite{Corlette})}}
Suppose that $\Gamma $ is a convex cocompact group of ${\rm Sp}(n+1,1)$. Then,\\
(i) The radial limit set coincides with the limit set.\\
(ii) Any small deformation of the inclusion $\iota:\Gamma\rightarrow {\rm Sp}(n+1,1)$ maps $\Gamma$ isomorphically to a convex cocompact group.
\end{prop}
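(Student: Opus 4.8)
The plan is to establish (i) and (ii) separately, both being standard facts about convex cocompact (equivalently, geometrically finite with no parabolics) subgroups of the rank-one group ${\rm Sp}(n+1,1)$ acting on the Hadamard manifold $B^{4n+4}$ of pinched negative curvature. Throughout I would use the classical equivalence: for a non-elementary discrete $\Gamma\subset{\rm Sp}(n+1,1)$, $\Gamma$ is convex cocompact if and only if its \emph{convex core} $N(\Gamma):={\rm CH}(\Lambda(\Gamma))/\Gamma$ is compact, where ${\rm CH}(\Lambda(\Gamma))\subset B^{4n+4}$ is the closed convex hull of $\Lambda(\Gamma)$; for the elementary case both assertions are immediate, since then $\Gamma$ is finite or virtually cyclic. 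For (i), recall that $\Lambda^{r}(\Gamma)\subseteq\Lambda(\Gamma)$ always, and that $\Lambda^{r}(\Gamma)$ is unchanged if the basepoint $0$ is replaced by any other point of $B^{4n+4}$. So fix $o\in{\rm CH}(\Lambda(\Gamma))$ and $\xi\in\Lambda(\Gamma)$, and let $r_{\xi}$ be the geodesic ray from $o$ to $\xi$; by convexity $r_{\xi}\subset{\rm CH}(\Lambda(\Gamma))$. Compactness of $N(\Gamma)$ yields a compact $K\subset{\rm CH}(\Lambda(\Gamma))$ with $\Gamma K={\rm CH}(\Lambda(\Gamma))$ and $D:={\rm diam}\,K<\infty$; hence for each $T>0$ there is $\gamma_{T}\in\Gamma$ with $d(r_{\xi}(T),\gamma_{T}(o))\le D$, and letting $T\to\infty$ shows $\liminf_{T\to\infty}d(\xi_{T},\gamma(o))\le D<\infty$ along a suitable sequence $\gamma=\gamma_{T_{n}}$, i.e. $\xi\in\Lambda^{r}(\Gamma)$.

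For (ii), since $N(\Gamma)$ is a compact manifold, $\Gamma$ acts properly discontinuously, cocompactly and by isometries on ${\rm CH}(\Lambda(\Gamma))$, which with the induced metric is a proper geodesic Gromov-hyperbolic space; by the \v{S}varc--Milnor lemma $\Gamma$ is then finitely generated and the orbit map $\Gamma\to B^{4n+4}$ is a quasi-isometric embedding, so $\Gamma$ is word-hyperbolic with Gromov boundary $\partial_{\infty}\Gamma$ equivariantly homeomorphic to $\Lambda(\Gamma)$. I would then invoke structural stability of such representations. Fix a finite generating set and regard the inclusion $\iota$ as a point of ${\rm Hom}(\Gamma,{\rm Sp}(n+1,1))$. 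The action of $\Gamma$ on $\Lambda(\Gamma)$ is a uniformly expanding convergence action, and a graph-transform (Banach fixed-point) argument on the complete metric space of continuous $\rho$-equivariant maps $\partial_{\infty}\Gamma\to S^{4n+3}$ --- the contraction supplied by the expansion of $\Gamma$ on $\Lambda(\Gamma)$ together with properness and cocompactness of the convex-core action, and the needed compactness by Arzel\`a--Ascoli --- produces, for every $\rho$ sufficiently close to $\iota$, a continuous $\rho$-equivariant map $\xi_{\rho}:\partial_{\infty}\Gamma\to S^{4n+3}$, depending continuously on $\rho$, with $\xi_{\iota}$ the identification $\partial_{\infty}\Gamma\cong\Lambda(\Gamma)$. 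One checks that $\xi_{\rho}$ remains injective for $\rho$ near $\iota$ and that $\rho(\Gamma)$ acts as a convergence group on $\xi_{\rho}(\partial_{\infty}\Gamma)$ with no global fixed point; hence $\rho$ is faithful with discrete image and $\xi_{\rho}(\partial_{\infty}\Gamma)=\Lambda(\rho(\Gamma))$. Finally, the orbit map of $\rho(\Gamma)$ is again a quasi-isometric embedding --- being a quasi-isometric embedding is an open condition on $\rho$, by the Morse (shadowing) lemma for quasi-geodesics in pinched negative curvature --- so ${\rm CH}(\Lambda(\rho(\Gamma)))/\rho(\Gamma)$ is compact, i.e. $\rho(\Gamma)$ is convex cocompact; in particular $\rho:\Gamma\to\rho(\Gamma)$ is an isomorphism.

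The main obstacle is part (ii): the crux is to set up the graph transform on the correct function space and to show it is a genuine contraction, uniformly for $\rho$ in a neighborhood of $\iota$. This requires a quantitative form of the expansion of the unperturbed action on $\Lambda(\Gamma)$ together with properness and cocompactness of the $\Gamma$-action on the convex core, followed by the two checks (a) injectivity of the perturbed boundary map $\xi_{\rho}$ --- which is what makes $\rho$ faithful and $\rho(\Gamma)$ discrete --- and (b) persistence of compactness of the convex core, resting on the openness of the quasi-isometric-embedding condition via the Morse lemma. Part (i), by contrast, is an essentially immediate consequence of the equivalence between convex cocompactness and compactness of the convex core.
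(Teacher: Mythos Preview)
The paper does not give a proof of this proposition at all: it is stated with the attribution ``(cf.\ p.~528 in \cite{Corlette})'' and used as a black box, with no argument supplied. So there is nothing in the paper to compare your attempt against.

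That said, your sketch is a reasonable outline of the standard proofs. Part (i) is exactly the usual argument: compactness of the convex core $N(\Gamma)$ gives a compact fundamental domain $K$ in ${\rm CH}(\Lambda(\Gamma))$, and since any geodesic ray to a limit point stays in the convex hull, it returns to the $\Gamma$-orbit of $K$ along a sequence of times going to infinity, which is precisely the conical (radial) condition. This part is complete as written.

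Part (ii) is the correct strategy but is considerably more sketchy than a proof. The key ingredients you invoke --- that a convex cocompact inclusion is a quasi-isometric embedding, that this condition is open in ${\rm Hom}(\Gamma,{\rm Sp}(n+1,1))$ via the Morse lemma, and that a quasi-isometrically embedded subgroup of a rank-one Lie group is convex cocompact --- are all true and together yield the result. However, the ``graph-transform (Banach fixed-point) argument'' you allude to for producing the equivariant boundary map $\xi_\rho$ is asserted rather than carried out, and the passage from injectivity of $\xi_\rho$ to discreteness and faithfulness of $\rho$ deserves a sentence of justification (e.g.\ a nontrivial element of $\ker\rho$ would have infinite order and act nontrivially on $\partial_\infty\Gamma$, contradicting $\rho$-equivariance). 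If you want a self-contained reference, the openness of convex cocompactness in rank one is proved in Corlette's paper and, in greater generality, in Bowditch's work on geometrical finiteness in variable negative curvature; your outline is essentially the modern ``Anosov representation'' reformulation of their arguments.
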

An interesting class of convex cocompact groups can be obtained as follows.
For a convex cocompact group $\Gamma$ in ${\rm Sp}(n+1,1)$, there is a large family of infinitesimal deformations of  $\Gamma$  and these are all
unobstructed. The small deformations give convex cocompact groups by (ii) above. 

Let $\{C_{i},C_{i}'\}_{i=1}^{k}$ be the boundary of mutually disjoint
balls $\{D_{i},D_{i}'\}_{i=1}^{k},$  where $D_{i}=B_{\mathscr{H}}(\xi_{i},r_{i})$ and $ D'_{i}=B_{\mathscr{H}}(\xi'_{i},r'_{i}),$ for some points
$\xi_{i},\xi'_{i}\in \mathscr{H}^{n},\  r_{i},\ r'_{i}>0,\ i=1,\cdots,k.$ There always exist group elements $\gamma_{i}\in {\rm Sp}(n+1,1) $ such that
$$\gamma_{i}(\mathscr{H}^{n}\backslash \bar{D_{i}})=D'_{i} $$ 
(the ``dilations" with the origin and infinity played by the centers of balls $ D_{i}$ and $D_{i}' $, respectively).  Then $\{\gamma_{i}\}$ generates a convex cocompact subgroup $\Gamma$ which is isomorphic to
the free group of rank $k$. As in the Riemannian case , we call such group the \emph{Schottky group}. It is easy to see that
$S^{4n+3}\backslash\cup_{1}^{k}(\overline{D_{i}}\cup D_{i}')$ is the fundamental domain for $\Gamma$. $\Omega(\Gamma)/\Gamma$ is   diffeomorphic to
\begin{align}
k=(S^{1}\times S^{4n+2})\sharp\cdots\sharp (S^{1}\times S^{4n+2}),
\end{align}
$\sharp$ is the connected sum operation defined by (\ref{54}).

See also \cite{Apanasov} for other interesting examples of convex cocompact groups.
\subsection{The Patterson-Sullivan measure}

\begin{thm}{\rm (cf. p. 532 in \cite{Corlette})}
For any convex cocompact Kleinian group $\Gamma\subset {\rm Sp}(n+1,1)$, there exists a probability measure $\mu_{\Gamma}$ supported on 	
$\Lambda(\Gamma)$ such that
\begin{align}\label{5.5}
\gamma^{*}\mu_{\Gamma}=|\gamma'|^{\delta(\Gamma)}\mu_{\Gamma}
\end{align}
for any $\gamma\in\Gamma,$ where $|\gamma'|$ is the conformal factor.
\end{thm}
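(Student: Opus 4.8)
The plan is to run the classical Patterson--Sullivan construction in the ball model $B^{4n+4}$ of quaternionic hyperbolic space. Fix a base point $q\in B^{4n+4}$ and, for $s>\delta(\Gamma)$, put $g_s(p,p'):=\sum_{\gamma\in\Gamma}e^{-\frac12 s\,d(p,\gamma p')}$, which converges by the definition of the critical exponent. On the compact space $\overline{B}^{4n+4}=B^{4n+4}\cup S^{4n+3}$ define the finite measures
\begin{align*}
\mu_{s,p}:=\frac{1}{g_s(q,q)}\sum_{\gamma\in\Gamma}e^{-\frac12 s\,d(p,\gamma q)}\,\delta_{\gamma q},
\end{align*}
so that $\mu_{s,q}$ is a probability measure. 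Two facts come for free. First, reindexing $\gamma\mapsto\gamma_0\gamma$ and using the $\Gamma$-invariance of $d$ gives $(\gamma_0)_{*}\mu_{s,q}=\mu_{s,\gamma_0 q}$ for $\gamma_0\in\Gamma$. Second, since the atoms $\{\gamma q\}$ are the same for every base point, $\mu_{s,\gamma_0 q}=e^{-\frac12 s\,\beta_{\gamma_0}}\mu_{s,q}$ as measures, where $\beta_{\gamma_0}(\eta):=d(\gamma_0 q,\eta)-d(q,\eta)$; the bound $|\beta_{\gamma_0}|\le d(\gamma_0 q,q)$ together with the standard fact that Busemann functions extend continuously to the ideal boundary shows that $\beta_{\gamma_0}$ extends to a \emph{continuous} function on $\overline{B}^{4n+4}$, with boundary value the Busemann cocycle $\beta_{\gamma_0}(\xi)=B_\xi(\gamma_0 q,q)$, $\xi\in S^{4n+3}$.

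Next I would extract a weak-$*$ limit. The probability measures on the compact space $\overline{B}^{4n+4}$ form a weak-$*$ compact set, so along a sequence $s_k\downarrow\delta(\Gamma)$ we get $\mu_{s_k,q}\to\mu_\Gamma$. Because $\Gamma$ is convex cocompact it is of divergence type, i.e. $g_s(q,q)\to\infty$ as $s\downarrow\delta(\Gamma)$; this is the one genuinely geometric input, and may be quoted from \cite{Corlette}, or proved by an orbit-counting estimate using compactness of the convex core, or circumvented by Patterson's device of inserting a slowly varying weight $h(d(p,\gamma q))$ into the Poincar\'e series. Combined with proper discontinuity of $\Gamma$ on $B^{4n+4}$ (only finitely many orbit points meet any compact $K\subset B^{4n+4}$), divergence forces $\mu_{s_k,q}(K)\to 0$; hence $\mu_\Gamma$ is carried by $S^{4n+3}$, and in fact by $\overline{\Gamma q}\cap S^{4n+3}=\Lambda(\Gamma)$ since every atom of $\mu_{s,q}$ lies in the orbit $\Gamma q$.

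Finally I would pass to the limit in the chain $(\gamma_0)_{*}\mu_{s_k,q}=\mu_{s_k,\gamma_0 q}=e^{-\frac12 s_k\beta_{\gamma_0}}\mu_{s_k,q}$. The left side tends to $(\gamma_0)_{*}\mu_\Gamma$ by weak-$*$ continuity of push-forward, and the right side tends to $e^{-\frac12\delta(\Gamma)\beta_{\gamma_0}}\mu_\Gamma$ because $\beta_{\gamma_0}$ is continuous on $\overline{B}^{4n+4}$ and $s_k\to\delta(\Gamma)$; hence
\begin{align*}
(\gamma_0)_{*}\mu_\Gamma=e^{-\frac12\delta(\Gamma)\,B_{(\cdot)}(\gamma_0 q,q)}\,\mu_\Gamma\qquad\text{on }S^{4n+3}.
\end{align*}
Taking $\gamma_0=\gamma^{-1}$ and using $(\gamma^{-1})_{*}=\gamma^{*}$ gives $\gamma^{*}\mu_\Gamma=e^{-\frac12\delta(\Gamma)\,B_{(\cdot)}(\gamma^{-1}q,q)}\mu_\Gamma$. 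It then remains to identify $e^{-\frac12 B_\xi(\gamma^{-1}q,q)}$ with the conformal factor $|\gamma'|(\xi)$ of the action of $\gamma$ on $(S^{4n+3},g_S)$: using $d(x,y)=2\,{\rm arccosh}\,|(x,y)|$ from \eqref{2.20} and the ${\rm Sp}(n+1,1)$-invariance of the fundamental invariant, one gets $e^{-\frac12 B_\xi(\gamma^{-1}q,q)}=\lim_{q'\to\xi}\bigl(|(q,q')|/|(q,\gamma q')|\bigr)^{2}$, and comparing this limit with the metric transformation law $\gamma^{*}g_S=|[(\xi,1)\gamma]_{n+2}|^{-2}g_S$ of Proposition~\ref{1234} pins down $|\gamma'|$ and finishes the proof.

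The main obstacle is the divergence of the Poincar\'e series at the critical exponent: this is where convex cocompactness really enters, through compactness of the convex core and a Margulis-type orbit-counting argument (or a shadow-lemma bootstrap on the candidate measure), and at the level of the present paper it is legitimate to cite \cite{Corlette}. The rest is routine but convention-sensitive bookkeeping, in which the factors of $\tfrac12$ coming from the normalization $d=2\,{\rm arccosh}|(\cdot,\cdot)|$ (and the precise power of $|[(\xi,1)\gamma]_{n+2}|$ defining $|\gamma'|$) must be tracked carefully.
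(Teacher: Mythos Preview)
Your proposal is correct and follows essentially the same Patterson--Sullivan construction the paper sketches immediately after the theorem statement: orbital measures $\mu_{s,p}$, Patterson's weight trick (the paper's coefficients $a_\gamma$) to force divergence and hence support on $\Lambda(\Gamma)$, the change-of-basepoint identity $\gamma^{*}\mu_{s,p}=\mu_{s,\gamma^{-1}(p)}$, and identification of the Radon--Nikodym density with $|\gamma'|^{\delta(\Gamma)}$ via the Busemann function and the fundamental invariant \eqref{580}--\eqref{2.20}. One bookkeeping slip you already anticipate: since $d=2\,\mathrm{arccosh}|(\cdot,\cdot)|$ contributes a factor of $2$ that cancels the $\tfrac12$, the exponent in your displayed limit $e^{-\frac12 B_\xi(\gamma^{-1}q,q)}=\lim_{q'\to\xi}\bigl(|(q,q')|/|(q,\gamma q')|\bigr)^{?}$ should be $1$, not $2$.
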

See also \cite{EMM} and \cite{wang1} for Patterson-Sullivan measure for the complex case. We need to know  the explicit conformal  factor $|\gamma'|$ for our
purpose later.
Fix a point $q\in\mathbb{H}^{n+1}$, the series
 $\sum_{\gamma\in\Gamma}e^{-\frac{s}{2}\cdot d(p,\gamma (q))}$ 
converges for $s>\delta(\Gamma)$ for any $p\in B^{4n+4}$, and diverges for any $s<\delta(\Gamma)$. Fix a reference point $0\in B^{4n+4}$. Let us recall
the definition of Patterson-Sullivan measure in  \cite{Patterson}. Define a family of measures as
\begin{align}\label{eq:PS-measure}
\mu_{s,p}:=\frac{\sum_{\gamma\in \Gamma}e^{-\frac{1}{2}s\cdot d(p,\gamma (q))}\delta_{\gamma (q)}}{\sum_{\gamma\in \Gamma}e^{-\frac{1}{2}s\cdot d(0,\gamma
(q))}},
\end{align}
where $\delta_{\gamma (q)}$ is the Dirac measure supported at point $\gamma(q)$.
For each  $s>\delta(\Gamma)$, this  is  a  finite positive measure  concentrated  on $\Gamma q\subset\overline{\Gamma q}$. The  set  of
all  probability  measures  on $\overline{\Gamma q}$ is  compact (cf. p. 532 in \cite{Corlette}),  and so  there  is  a  sequence $s_{i}$ approaching
$\delta(\Gamma)$ from  above  such  that $\mu_{s_{i},p}$ approaches  a  limit $\mu_{s,p}$. After  rewriting  the coefficients,  we  may  assume  that  the
denominator  in  (\ref{eq:PS-measure}) diverges at $s=\delta(\Gamma)$. Thus,  we  replace  the  above  expression  by
\begin{align}\label{eq:PS-measure'}
\mu_{s,p}=\frac{\sum_{\gamma\in \Gamma}a_{\gamma}e^{-\frac{1}{2}s\cdot d(p,\gamma (q))}\delta_{\gamma (q)}}{\sum_{\gamma\in
\Gamma}a_{\gamma}e^{-\frac{1}{2}s\cdot d(0,\gamma (q))}}
\end{align}
with    $a_{\gamma}$'s so  chosen  that  the  denominator  converges  for
$s>\delta(\Gamma)$  and  diverges  for $s\leq\delta(\Gamma)$. The denominator of this expression will be denoted by $L(s,0).$
The definition of the measure $\mu_{s}$ does not depend on $p\in{B}^{4n+4}$ and the choice of $a_{\gamma}$ (cf. p. 532 \cite{Corlette}). The
\emph{Patterson-Sullivan measure} is the weak limit of these measures:
\begin{align*}
\mu_{\Gamma,p}=\lim_{s_{i}\rightarrow\delta(\Gamma)^{+}}\mu_{s_{i},p}.
\end{align*}
Since we use the right action of matrix, here $\gamma^{-1}(\gamma'(q))=(\gamma'\gamma^{-1})(q).$ For any $\gamma \in {\rm{Sp}}(n+1,1)$ and any $f\in
C(\bar{B})$, we have
\begin{equation*}
\begin{aligned}
(\gamma^{*}\mu_{s_{i},p})(f)&=\frac{\sum_{\gamma'\in\Gamma}a_{\gamma'}
e^{-\frac{1}{2}s_{i}\cdot d(p,\gamma' (q))}\gamma^{*}\delta_{\gamma' (q)}(f)}{L(s_{i},0)}\\&=\frac{\sum_{\gamma'\in
\Gamma}a_{\gamma'}e^{-\frac{1}{2}s_{i}\cdot d(\gamma^{-1} (p),\gamma^{-1}(\gamma' (q)))}f(\gamma^{-1}{(\gamma' (q))})}{L(s_{i},0)}
\\&=\frac{\sum_{\gamma'\in \Gamma}a_{\gamma'\gamma}e^{-\frac{1}{2}s_{i}\cdot d(\gamma^{-1} (p),\gamma' (q))}f({\gamma'
(q)})}{L(s_{i},0)}=\mu_{s_{i},\gamma^{-1} (p)}(f)
\end{aligned}
\end{equation*}
by the invariance of the quaternionic hyperbolic distance $d(\cdot,\cdot)$  under the action of ${\rm{Sp}}(n+1,1).$ It is easy to see that
$\{a_{\gamma'\gamma}\}$ is also such sequence satisfying  the definition for fixed $\gamma$.
Letting $s_{i}\rightarrow \delta^{+}$, we get
\begin{align*}
\gamma^{*}\mu_{\Gamma,p}=\mu_{\Gamma,\gamma^{-1} (p)}.
\end{align*}
Recall that we have the following the Radon-Nikodym relation (cf. p.77, p.81 in \cite{Yue}):
\begin{align*}
\left.\frac{d\mu_{\Gamma,\gamma^{-1} (p)}}{d\mu_{\Gamma,p}}\right|_{\xi}=
e^{-\frac{\delta}{2}\rho_{p,\xi}(\gamma^{-1}(p))},\ \xi\in S^{4n+3},
\end{align*}
where $\rho_{p,\xi}(q)$ is the Buseman function at $\xi\in\Lambda(\Gamma)$ normalized such that $\rho_{p,\xi}(p)=0.$ It follows from the formula of
Buseman function (cf. p. 81 in \cite{Yue}),
\begin{align*}
\rho_{p,\xi}(q)=\ln\frac{|1-\langle p,p\rangle||1-\langle q,\xi\rangle|^{2}}{|1-\langle q,q\rangle||1-\langle p,\xi\rangle|^{2}},
\end{align*}
that
\begin{equation*}
\begin{aligned}
\left.\frac{d\mu_{\Gamma,\gamma^{-1} (p)}}{d\mu_{\Gamma,p}}\right|_{\xi}=\left(\frac{\left|1-\langle \gamma^{-1}(p),\gamma^{-1}
(p)\rangle\right|^{\frac{1}{2}}\left|1-\langle p,\xi\rangle\right|}{\left|1-\langle p,p\rangle\right|^{\frac{1}{2}}\left|1-\langle \gamma^{-1}
(p),\xi\rangle\right|}\right)^{\delta(\Gamma)},
\end{aligned}
\end{equation*}
for $\xi\in S^{4n+3}.$ Then, for $p=0,$ we have
\begin{equation*}
\begin{aligned}
\left.\frac{d\mu_{\Gamma,\gamma^{-1} (0)}}{d\mu_{\Gamma,0}}\right|_{\xi}
&=\lim_{\eta\rightarrow\xi}\left(\frac{\left|1-\langle \gamma^{-1}(0),\gamma ^{-1}(0)\rangle\right|^{\frac{1}{2}}|1-\langle 0,\eta\rangle|}{|1-\langle
0,0\rangle|^{\frac{1}{2}}|1-\langle \gamma^{-1} (0),\eta\rangle|}\right)^{\delta(\Gamma)}\\
&=\lim_{\eta\rightarrow\xi}\left|\frac{(0,\eta)}{(\gamma^{-1} (0),\eta)}\right|^{\delta(\Gamma)}
=\lim_{\eta\rightarrow\xi}\left|\frac{(0,\eta)}{( 0,\gamma(\eta))}\right|^{\delta(\Gamma)}\\
&=\lim_{\eta\rightarrow\xi}\left|\frac{1-\langle\gamma(\eta),\gamma(\eta)\rangle}
{1-\langle\eta,\eta\rangle}\right|^{\frac{\delta(\Gamma)}{2}}
=\frac{1}{|[(\xi,1)\gamma]_{n+2}|^{\delta(\Gamma)}},
\end{aligned}
\end{equation*}
by (\ref{2.36}), where $\eta\in B^{4n+4}$ and  $|(\cdot,\cdot)|$  defined in (\ref{580})-(\ref{2.20}) is invariant under ${\rm Sp}(n+1,1).$
So if we  define $\mu_{\Gamma}:=\mu_{\Gamma,0},$ we get
$$\gamma^{*}d\mu_{\Gamma}(\xi)=\frac{1}{|[(\xi,1)\gamma]_{n+2}|
^{\delta(\Gamma)}}d\mu_{\Gamma}(\xi).$$
Then, we have
\begin{prop}\label{5.2}
For any $\gamma\in\Gamma$, the conformal factor $$|\gamma'(\xi)|=\frac{1}{|[(\xi,1)\gamma]_{n+2}|},\quad {\rm for}\ \xi\in S^{4n+3}.$$
\end{prop}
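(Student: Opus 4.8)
The plan is to extract the formula by matching two computations of the Radon--Nikodym derivative of $\gamma^{*}\mu_{\Gamma}$ against $\mu_{\Gamma}$. On the one hand, the defining transformation law (\ref{5.5}) of the Patterson--Sullivan measure reads $\gamma^{*}\mu_{\Gamma}=|\gamma'|^{\delta(\Gamma)}\mu_{\Gamma}$, so that $d(\gamma^{*}\mu_{\Gamma})/d\mu_{\Gamma}=|\gamma'|^{\delta(\Gamma)}$ holds $\mu_{\Gamma}$-almost everywhere. On the other hand, the chain of identities established just above --- $\gamma^{*}\mu_{\Gamma,p}=\mu_{\Gamma,\gamma^{-1}(p)}$, the Radon--Nikodym relation for the family $\{\mu_{\Gamma,p}\}$ expressed through the Busemann function, the explicit formula for the Busemann function, and finally (\ref{2.36}) together with the ${\rm Sp}(n+1,1)$-invariance of the fundamental invariant $|(\cdot,\cdot)|$ in (\ref{580})--(\ref{2.20}) --- gives
\begin{align*}
\gamma^{*}d\mu_{\Gamma}(\xi)=\frac{1}{|[(\xi,1)\gamma]_{n+2}|^{\delta(\Gamma)}}\,d\mu_{\Gamma}(\xi).
\end{align*}
Comparing the two and taking $\delta(\Gamma)$-th roots (using $\delta(\Gamma)>0$, which holds for any non-elementary convex cocompact $\Gamma$) yields $|\gamma'(\xi)|=|[(\xi,1)\gamma]_{n+2}|^{-1}$ for $\mu_{\Gamma}$-almost every $\xi\in\Lambda(\Gamma)$.

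It remains to see that this identity in fact holds at every $\xi\in S^{4n+3}$, not merely $\mu_{\Gamma}$-a.e.\ on the limit set. This is clear once one recalls what the conformal factor means: $|\gamma'|$ is the positive function describing how the conformal qc transformation $\gamma$ rescales the standard qc structure, and by Proposition \ref{1234} (together with Proposition \ref{p2.2}) one has $\gamma^{*}g_{S}=|[(\zeta,1)\gamma]_{n+2}|^{-2}g_{S}$ globally on $S^{4n+3}$, and correspondingly $\gamma^{*}dV_{g_{S},\mathbb{Q}}=|[(\zeta,1)\gamma]_{n+2}|^{-Q}dV_{g_{S},\mathbb{Q}}$; in either normalization the conformal factor equals $|[(\zeta,1)\gamma]_{n+2}|^{-1}$. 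Hence $|\gamma'(\xi)|=|[(\xi,1)\gamma]_{n+2}|^{-1}$ on all of $S^{4n+3}$, and the almost-everywhere identification above is consistent with it.

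The only point requiring any care --- and it is a mild one --- is precisely this passage from the $\mu_{\Gamma}$-a.e.\ statement delivered by the Patterson--Sullivan comparison (whose support may be a thin Cantor-type set, so no density argument on $S^{4n+3}$ is available) to the pointwise statement on the whole sphere; it is resolved by the global conformal-factor computation of Proposition \ref{1234} rather than by density. Everything else is a direct substitution, already carried out in the displayed computations preceding the statement.
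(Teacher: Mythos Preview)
Your proposal is correct and follows essentially the same route as the paper: compare the transformation law (\ref{5.5}) with the explicit Radon--Nikodym computation via the Busemann function that is displayed immediately before the proposition, and read off $|\gamma'(\xi)|=|[(\xi,1)\gamma]_{n+2}|^{-1}$. You go a step further than the paper by noting that this comparison, as written, only yields the identity $\mu_{\Gamma}$-a.e.\ on $\Lambda(\Gamma)$, and you close the gap by invoking Proposition~\ref{1234} to obtain the formula at every $\xi\in S^{4n+3}$; the paper states the global conclusion without making this passage explicit.
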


\begin{thm}\rm{(cf. p. 533 in \cite{Corlette})}\label{t5.2}
If $\Gamma\in {\rm Sp}(n+1,1)$ is a convex cocompact group which is not contained in any proper parabolic subgroup. Then the measure $\mu_{\Gamma}$
coinsides with $\delta(\Gamma)$-dimensional Hausdorff measure concentrated on $\Lambda(\Gamma),$ i.e. there exist constants $C_{1}<C_{2}$ and $R$ such
that if $\xi\in\Lambda(\Gamma)$ and $r<R$, then
\begin{align*}
C_{1}\leq\frac{\mu_{\Gamma}(B_{r}(\xi))}{r^{\delta(\Gamma)}}\leq C_{2},
\end{align*}
where $B_{r}(\xi)$ is the ball in $S^{4n+3}$ with radius $r$ under the Carnot-Carath\'eodory distance $d_{cc}$. Here
$d_{cc}(p,q)=\inf_{\gamma}\int_{0}^{1}|\gamma'(t)|dt$ for any $p,q\in S^{4n+3},$  where $\gamma:[0,1]\rightarrow S^{4n+3}$ are Lipschitzian horizontal
curves, i.e. $\gamma'(t)\in H_{\gamma(t)}$ almost everywhere.
\end{thm}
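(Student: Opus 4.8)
Since Theorem \ref{t5.2} is quoted from \cite{Corlette}, one may simply cite it; what follows is the plan for the argument one would reconstruct, namely Sullivan's shadow-lemma argument adapted to quaternionic hyperbolic space. The first step is the \emph{shadow lemma}. For $\gamma\in\Gamma$ and $R_{0}>0$ let $\mathcal{O}_{R_{0}}(\gamma(0))\subset S^{4n+3}$ be the shadow, seen from the base point $0\in B^{4n+4}$, of the hyperbolic ball of radius $R_{0}$ about $\gamma(0)$. I would prove that there exist $R_{0}$ and $C\geq 1$ with
$$C^{-1}e^{-\frac{\delta(\Gamma)}{2}d(0,\gamma(0))}\leq\mu_{\Gamma}\big(\mathcal{O}_{R_{0}}(\gamma(0))\big)\leq C\,e^{-\frac{\delta(\Gamma)}{2}d(0,\gamma(0))}$$
for every $\gamma\in\Gamma$. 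The proof applies the conformal density relation $\gamma^{*}\mu_{\Gamma}=|\gamma'|^{\delta(\Gamma)}\mu_{\Gamma}$ from (\ref{5.5}) together with the explicit conformal factor of Proposition \ref{5.2}: pulling back by $\gamma^{-1}$ turns $\mathcal{O}_{R_{0}}(\gamma(0))$ into the shadow of the ball of radius $R_{0}$ about $0$ seen from $\gamma^{-1}(0)$, on which $|\gamma'|$ is comparable, up to a multiplicative constant depending only on $R_{0}$, to $e^{-\frac{1}{2}d(0,\gamma(0))}$ (the standard estimate relating the conformal factor at a boundary point to the distance moved, read off here from Proposition \ref{5.2} and (\ref{580})--(\ref{2.20})). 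What remains is that $\mu_{\Gamma}$ gives mass bounded below to the shadow of a large ball about $0$ seen from any point of the convex hull of $\Lambda(\Gamma)$; here one uses that $\Gamma$, being convex cocompact and not contained in a proper parabolic subgroup, is non-elementary, so $\delta(\Gamma)>0$, $\Lambda(\Gamma)$ is a perfect set, and $\mu_{\Gamma}$ charges every nonempty relatively open subset of $\Lambda(\Gamma)$; the uniformity in $\gamma$ then follows from cocompactness of the $\Gamma$-action on the convex hull, which reduces the estimate to a compact family of configurations.

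The second step is to convert shadows into Carnot--Carath\'eodory balls. Using the geometry of the ball model and the fact, recalled just before the theorem, that for a convex cocompact group every point of $\Lambda(\Gamma)$ is a radial limit point, I would show that for every $\xi\in\Lambda(\Gamma)$ and every sufficiently small $r$ there is $\gamma\in\Gamma$ with $e^{-\frac{1}{2}d(0,\gamma(0))}$ comparable to $r$, and constants $0<c_{1}<c_{2}$ independent of $\xi$ and $r$, such that
$$B_{c_{1}r}(\xi)\subset\mathcal{O}_{R_{0}}(\gamma(0))\subset B_{c_{2}r}(\xi),$$
where $B_{\rho}(\xi)$ denotes the $d_{cc}$-ball: the geodesic ray from $0$ toward $\xi$ passes within bounded distance of the orbit point $\gamma(0)$, and the shadow of a fixed-radius ball about $\gamma(0)$ is a $d_{cc}$-ball of radius comparable to $e^{-\frac{1}{2}d(0,\gamma(0))}$ centred near $\xi$. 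Combining with the shadow lemma gives $\mu_{\Gamma}(B_{c_{1}r}(\xi))\asymp\mu_{\Gamma}(B_{c_{2}r}(\xi))\asymp r^{\delta(\Gamma)}$, and a routine covering argument upgrades this to $C_{1}\leq\mu_{\Gamma}(B_{r}(\xi))/r^{\delta(\Gamma)}\leq C_{2}$ for all $r<R$. Ahlfors $\delta(\Gamma)$-regularity then forces $\mu_{\Gamma}$ to be equivalent to the $\delta(\Gamma)$-dimensional Hausdorff measure on $\Lambda(\Gamma)$ by the standard comparison of an Ahlfors-regular measure with Hausdorff measure.

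The main obstacle is the lower bound in the shadow lemma, i.e. the uniform positivity of $\mu_{\Gamma}\big(\mathcal{O}_{R_{0}}(\gamma(0))\big)e^{\frac{\delta(\Gamma)}{2}d(0,\gamma(0))}$ over all $\gamma\in\Gamma$. This is exactly where both convex cocompactness (cocompact action on the convex hull, yielding a compact family of model configurations) and the hypothesis that $\Gamma$ is not contained in a proper parabolic subgroup (so $\Gamma$ is non-elementary, hence $\mu_{\Gamma}$ is non-atomic and of full support in $\Lambda(\Gamma)$) are indispensable; without them the measure of a shadow could degenerate and the two-sided bound would fail. The reduction from shadows to metric balls in the second step, though geometric, is routine given the explicit geodesics in the ball model and the conformal factor formula of Proposition \ref{5.2}.
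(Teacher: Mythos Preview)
The paper gives no proof of Theorem \ref{t5.2} whatsoever: it is simply quoted from Corlette \cite{Corlette}, as the ``cf.'' in the statement indicates, and the paper moves on immediately to Section 6. You recognized this correctly in your first sentence, and a bare citation is all that is required here. The shadow-lemma sketch you supply is a reasonable outline of the standard Sullivan--Corlette argument, but it goes well beyond what the paper itself does; there is nothing in the paper to compare it against.
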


\section{An invariant qc metric of Nayatani type}

When the spherical qc manifold is $\Omega(\Gamma)/\Gamma$ for some convex cocompact subgroup $\Gamma$ of ${\rm Sp}(n+1,1)$, we can construct an invariant
qc metric $g_{\Gamma}$, which is the qc generalization of Nayatani's canonical metric in conformal geometry \cite{Nayatani}.

Define a $C^{\infty}$ function on $\Omega(\Gamma)$ by
\begin{align*}
\phi_{\Gamma}(\xi):=\left(\int_{\Lambda(\Gamma)}
G_{S}^{\kappa}(\xi,\zeta)d\mu_{\Gamma}(\zeta)\right)^{\frac{1}{\kappa}},\
\kappa=\frac{2\delta(\Gamma)}{Q-2}.
\end{align*}
Set $g_{\Gamma}:=\phi_{\Gamma}^{\frac{4}{Q-2}}g_{S}.$
Since
\begin{align*}
G_{S}(\gamma(\xi),\gamma(\zeta))=|\gamma'(\xi)|
^{-\frac{Q-2}{2}}|\gamma'(\zeta)|^{-\frac{Q-2}{2}}
G_{S}(\xi,\zeta)
\end{align*}
by the notation   $|\gamma'|$ in Proposition \ref{5.2},  Proposition \ref{1234} and the transformation law of Green functions (\ref{32}), we have
\begin{equation}\label{6.3}
\begin{aligned}
\phi_{\Gamma}(\gamma(\xi))&=
\left(\int_{\Lambda({\Gamma})}G_{S}
^{\frac{2\delta(\Gamma)}{Q-2}}(\gamma(\xi),\zeta)
d{\mu_{\Gamma}(\zeta)}\right)^{\frac{Q-2}{2\delta(\Gamma)}}
=\left(\int_{\Lambda({\Gamma})}G_{S}^
{\frac{2\delta(\Gamma)}{Q-2}}(\gamma(\xi),\gamma(\zeta))
d\gamma^{*}{\mu_{\Gamma}(\zeta)}\right)^{\frac{Q-2}{2\delta(\Gamma)}}\\
&=\left(\int_{\Lambda({\Gamma})}|\gamma'(\xi)|
^{-\delta({\Gamma})}G_{S}^{\frac{2\delta(\Gamma)}{Q-2}}(\xi,\zeta)
d{\mu_{\Gamma}(\zeta)}\right)
^{\frac{Q-2}{2\delta(\Gamma)}}
=|\gamma'(\xi)|^{-\frac{Q-2}{2}}\phi_{\Gamma}(\xi)
\end{aligned}
\end{equation}
by using  formula (\ref{5.5}). Therefore, (\ref{6.3}) together with Proposition \ref{1234} and Proposition \ref{5.2} implies that
$
\gamma^{*}g_{\Gamma}=g_{\Gamma}.
$
So it induces a spherical qc metric  on $\Omega(\Gamma)/\Gamma$.

\begin{thm}\label{6.1}
Let $\Gamma$ be a convex cocompact subgroup of ${\rm Sp}(n+1,1)$ such that $\Lambda(\Gamma)\neq\{point\}$. Then, if $\delta(\Gamma)<2n+2$ (resp.
$\delta(\Gamma)>2n+2$, resp. $\delta(\Gamma)=2n+2$), the scalar curvature of $(\Omega(\Gamma)/\Gamma,{g}_{\Gamma},\mathbb{Q})$ is positive (resp.
negative, resp. zero) everywhere.
\end{thm}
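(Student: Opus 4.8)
The plan is to express the scalar curvature of $g_\Gamma$ through the qc Yamabe operator and then read off its sign from the exponent $\kappa=\delta(\Gamma)/(2n+2)$. By the qc Yamabe equation (Corollary \ref{1.6}) applied to $g_\Gamma=\phi_\Gamma^{4/(Q-2)}g_S$ one has $s_{g_\Gamma,\mathbb{Q}}=\phi_\Gamma^{-(Q+2)/(Q-2)}L_{g_S,\mathbb{Q}}\phi_\Gamma$, so since $\phi_\Gamma>0$ it suffices to determine the sign of $L_{g_S,\mathbb{Q}}\phi_\Gamma$ on $\Omega(\Gamma)$. Write $u(\xi)=\phi_\Gamma(\xi)^\kappa=\int_{\Lambda(\Gamma)}G_S(\xi,\zeta)^\kappa\,d\mu_\Gamma(\zeta)$. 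For $\xi$ in a compact subset of $\Omega(\Gamma)$, which is disjoint from $\Lambda(\Gamma)$, the function $\zeta\mapsto G_S(\xi,\zeta)$ is smooth, bounded, and bounded away from $0$, so I may differentiate under the integral sign. Using the chain rule for the SubLaplacian, $\Delta_{g_S,\mathbb{Q}}(F\circ v)=F'(v)\,\Delta_{g_S,\mathbb{Q}}v-\tfrac12 F''(v)\,|\nabla_{g_S}v|^2$, together with $\Delta_{g_S,\mathbb{Q}}G_S(\xi,\zeta)=-a\,G_S(\xi,\zeta)$ for $\xi\in\Omega(\Gamma)$ — where $a:=s_{g_S,\mathbb{Q}}/b_n>0$ is a constant, because $L_{g_S,\mathbb{Q}}G_S(\cdot,\zeta)=\delta_\zeta$ vanishes off the pole and $(S^{4n+3},g_S,\mathbb{Q}_S)$ is homogeneous hence of constant scalar curvature — I obtain $\Delta_{g_S,\mathbb{Q}}u=-a\kappa u-\tfrac12\kappa(\kappa-1)I$, where $I:=\int_{\Lambda(\Gamma)}G_S^{\kappa-2}|\nabla_{g_S}G_S|^2\,d\mu_\Gamma\ge 0$.

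Next I would apply the chain rule a second time to $\phi_\Gamma=u^{1/\kappa}$ and substitute the expression for $\Delta_{g_S,\mathbb{Q}}u$. A short computation, in which the term $a\,u^{1/\kappa}$ cancels because $\tfrac1\kappa\cdot\kappa=1$ and $s_{g_S,\mathbb{Q}}=a b_n$, gives
\[
L_{g_S,\mathbb{Q}}\phi_\Gamma=\frac{b_n(\kappa-1)}{2}\,u^{\frac1\kappa-1}\!\left(\frac{|\nabla_{g_S}u|^2}{\kappa^2 u}-I\right).
\]
The decisive inequality is Cauchy--Schwarz: writing $X_j u=\kappa\int_{\Lambda(\Gamma)}G_S^{\kappa/2}\cdot G_S^{\kappa/2-1}X_jG_S\,d\mu_\Gamma$, squaring and summing over $j$ yields $|\nabla_{g_S}u|^2\le\kappa^2 u\,I$, so the bracket above is $\le 0$. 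Consequently $L_{g_S,\mathbb{Q}}\phi_\Gamma$ has the sign of $-(\kappa-1)$: it is positive when $\kappa<1$ and negative when $\kappa>1$, and since $\kappa=\delta(\Gamma)/(2n+2)$ these are precisely the cases $\delta(\Gamma)<2n+2$ and $\delta(\Gamma)>2n+2$. The borderline $\delta(\Gamma)=2n+2$ is immediate and does not even need the formula above: then $\kappa=1$, $u=\int_{\Lambda(\Gamma)}G_S(\xi,\zeta)\,d\mu_\Gamma(\zeta)$ is a superposition of functions $G_S(\cdot,\zeta)$ each killed by $L_{g_S,\mathbb{Q}}$ on $\Omega(\Gamma)$, hence $L_{g_S,\mathbb{Q}}\phi_\Gamma\equiv 0$ and $s_{g_\Gamma,\mathbb{Q}}\equiv 0$. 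Together with $s_{g_\Gamma,\mathbb{Q}}=\phi_\Gamma^{-(Q+2)/(Q-2)}L_{g_S,\mathbb{Q}}\phi_\Gamma$ this settles the trichotomy once strictness is established.

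The main obstacle is exactly this strictness, i.e. showing $s_{g_\Gamma,\mathbb{Q}}$ is nowhere zero when $\delta(\Gamma)\ne 2n+2$, and this is where the hypothesis $\Lambda(\Gamma)\ne\{\mathrm{point}\}$ is used. Equality in the Cauchy--Schwarz step at some $\xi\in\Omega(\Gamma)$ forces, for each $j$, the ratio $X_j\ln G_S(\xi,\zeta)$ to be independent of $\zeta$ for $\mu_\Gamma$-a.e.\ $\zeta$; that is, the horizontal vector $\nabla_{g_S,\xi}\ln G_S(\xi,\zeta)$ must be one and the same for $\mu_\Gamma$-a.e.\ $\zeta$. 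Since the support of $\mu_\Gamma$ is $\Lambda(\Gamma)$ and $\zeta\mapsto\nabla_{g_S,\xi}\ln G_S(\xi,\zeta)$ is continuous on $\Lambda(\Gamma)$ (which avoids the pole $\xi$), this would force that vector to be constant in $\zeta$ on all of $\Lambda(\Gamma)$; but for two distinct points $\zeta_1,\zeta_2\in\Lambda(\Gamma)$ one has $\nabla_{g_S,\xi}\ln G_S(\xi,\zeta_1)\ne\nabla_{g_S,\xi}\ln G_S(\xi,\zeta_2)$ — this can be extracted from the explicit form of $G_S$ coming from Proposition \ref{1.10} and the Cayley transform (and the fact that $G_S(\xi,\cdot)$ blows up precisely at $\xi$), so the logarithmic gradients genuinely ``point at'' different boundary points — a contradiction. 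I expect the only genuinely delicate point beyond bookkeeping to be making this last nonvanishing statement fully rigorous; the differentiation under the integral and the two chain-rule computations are routine on the smooth compact manifold $\Omega(\Gamma)/\Gamma$, as is the fact (Theorem \ref{t5.2} and the general theory of Patterson--Sullivan measures) that $\operatorname{supp}\mu_\Gamma=\Lambda(\Gamma)$.
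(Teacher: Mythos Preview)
Your proof is correct and follows essentially the same strategy as the paper: write the scalar curvature via the conformal change formula, differentiate under the integral, and recognize the resulting expression as a constant multiple of the $\mu_\Gamma$-variance of the horizontal gradient $\nabla\ln G(\xi,\cdot)$, whose sign is governed by the factor $1-\kappa$ and whose strict positivity when $\Lambda(\Gamma)$ has more than one point is the separate lemma. The only difference is cosmetic: the paper first pulls everything back to the Heisenberg group by the Cayley transform and uses that $G_0$ is $\Delta_0$-harmonic off the pole, whereas you stay on $S^{4n+3}$ and use that $G_S$ is an eigenfunction of $\Delta_{g_S}$ (equivalently $L_{g_S}G_S=0$ off the pole with $s_{g_S}$ constant); your choice avoids the transform formulas and is slightly cleaner, but the computation and the Cauchy--Schwarz step are identical, and the paper likewise defers the strictness (``$\operatorname{Tr}A\ne 0$'') to the CR analogue rather than spelling out the injectivity of $\zeta\mapsto\nabla_\xi\ln G(\xi,\zeta)$.
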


\begin{proof}
To calculate its scalar curvature,  pull back the qc  metric ${g_{\Gamma}}$ locally to the quaternionic Heisenberg group by the quaternionic Cayley
transformation.   Without loss of generality, we can assume that the southern point $(0,\cdots,0,-1)$ of the sphere $S^{4n+3}$ is contained in $\Omega(\Gamma)$. Let
$\xi\in\Omega(\Gamma)$. Under the stereographic projection $F$ defined by (\ref{2.24}), we have
\begin{align}\label{3.3}
G_{S}(F^{-1}(\xi),F^{-1}(\xi'))=
\left(\frac{1}{2\left|1+p_{n+1}\right|^{2}}\right)^{-\frac{Q-2}{4}}
\left(\frac{1}{2\left|1+p'_{n+1}\right|^{2}}\right)^{-\frac{Q-2}{4}}G_{0}(\xi,\xi'),\ \end{align}
by Corollary \ref{c2.1} and Proposition \ref{p3.7}, where $(p,p_{n+1})=F^{-1}(\xi),(p',p'_{n+1})=F^{-1}(\xi')\in S^{4n+3}.$

Define the set $\tilde{\Lambda}(\Gamma)$ and the measure $\tilde{\mu}_{\Gamma}$ on $\mathscr{H}^{n}$ by
\begin{equation}\label{3.4}
\begin{aligned}
\tilde{\Lambda}(\Gamma):=F({\Lambda}(\Gamma)),\quad
\tilde{\mu}_{\Gamma}(\xi'):=\left(\frac{1}{2\left|1+p_{n+1}'\right|^{2}}\right)^{-\frac{\delta(\Gamma)}{2}}
\left(F^{-1}\right)^{*}{\mu}_{\Gamma}(\xi'),
\end{aligned}
\end{equation}
and the metric
$\tilde{g}_{\Gamma}$ on $\mathscr{H}^{n}\backslash\tilde{\Lambda}(\Gamma)$ by
\begin{align*}
\tilde{g}_{\Gamma}=\left(F^{-1}\right)^{*}{g}_{\Gamma}.
\end{align*}
By the stereographic projection $F$, $\Gamma$ induces an action on $\mathscr{H}^{n}$. It follows that that the set $\tilde{\Lambda}(\Gamma)$ and the
metric $\tilde{g}_{\Gamma}$ on $\mathscr{H}^{n}$ are both invariant under the action of $\Gamma$.

Now we begin to calculate its scalar curvature. Write
$
\tilde{g}_{\Gamma}|_{\xi} =u^{\frac{4}{Q-2}}(\xi)g_{0}|_{\xi}.
$ with
\begin{align*}
u(\xi)&=\left(\frac{1}{2\left|1+p_{n+1}\right|^{2}}\right)^{\frac{Q-2}{4}}\left(\int_{\Lambda(\Gamma)}
G_{S}^{\kappa}(F^{-1}(\xi),\zeta)d\mu_{\Gamma}
(\zeta)\right)^{\frac{1}{\kappa}}=\left(\int_{\tilde{\Lambda}(\Gamma)}
G_{{0}}^{\kappa}(\xi,\xi')d\tilde{\mu}_{\Gamma}
(\xi')\right)^{\frac{1}{\kappa}},
\end{align*}
by using (\ref{3.3}), Corollary \ref{c2.1} and the definition of the measure $\tilde{\mu}_{\Gamma}$ in (\ref{3.4}). Denote
\begin{align*}
\tilde{g}_{\Gamma}|_{\xi}:=e^{2f(\xi)}{g}_{0}|_{\xi},\quad
\phi(\xi,\eta):=G_{0}^{\frac{2}{2-Q}}=C_{Q}^{\frac{2}{2-Q}}\|\xi^{-1} \eta\|^{2}
\end{align*}
for $\xi,\eta\in\mathscr{H}^{n}$. Then
\begin{align*}
f(\xi)=\frac{1}{\delta({\Gamma})}\ln\left(\int_{\tilde{\Lambda}(\Gamma)}
\phi(\xi,\eta)^{-\delta({\Gamma})}d\tilde{\mu}_{\Gamma}(\eta)\right),
\end{align*}
and the  scalar curvature of $\tilde{g}_{\Gamma}$ is
\begin{align*}
s_{\tilde{g}_{\Gamma},\mathbb{Q}}=2(Q+2)e^{-2f(\xi)}\left( \Delta_{0}f-\sum_{j=1}^{4n}\frac{Q-2}{4}(Y_{j}f)^{2}\right)
\end{align*}
by Corollary \ref{2.32}.
Note that,
\begin{align*}
Y_{j}f(\xi)=-\int_{\tilde{\Lambda}(\Gamma)}\phi_{\eta}^{-1}(\xi)Y_{j}
\phi_{\eta}(\xi)d\nu(\eta),
\end{align*}
where $\phi_{\eta}(\cdot):=\phi(\cdot,\eta)$ and the measure
\begin{align}
d\nu(\eta):=\left( \int_{\tilde{\Lambda}(\Gamma)}\phi_{\eta}(\xi)^{-\delta}
d\tilde{\mu}_{\Gamma}(\eta)\right)^{-1}
\phi_{\eta}^{-\delta}(\xi)d\tilde{\mu}_{\Gamma}(\eta)
\end{align}
depends on $\xi,$ where $\delta:=\delta(\Gamma)$. Then,
\begin{align*}
Y_{j}Y_{j}f(\xi)=-\int_{\tilde{\Lambda}(\Gamma)}
\phi_{\eta}^{-1}Y_{j}Y_{j}\phi_{\eta}d\nu+
(\delta+1)\int_{\tilde{\Lambda}(\Gamma)}
\left(\phi_{\eta}^{-1}Y_{j}\phi_{\eta}\right)^{2}d\nu-
\delta\left(\int_{\tilde{\Lambda}(\Gamma)}
\phi_{\eta}^{-1}Y_{j}\phi_{\eta}d\nu\right)^{2},
\end{align*}
and so
\begin{equation}
\begin{aligned}\label{514}
\Delta_{0}f-\frac{Q-2}{4}\sum_{j=1}^{4n}(Y_{j}f)^{2}
&=-\int_{\tilde{\Lambda}(\Gamma)}\phi_{\eta}^{-1}\Delta_{0}\phi_{\eta}d\nu-
\frac{(\delta+1)}{2}
\int_{\tilde{\Lambda}(\Gamma)}\phi_{\eta}^{-2}
\sum_{j=1}^{4n}|Y_{j}\phi_{\eta}|^{2}d\nu\\&\quad
+\frac
{\delta}{2}\sum_{j=1}^{4n}
\left(\int_{\tilde{\Lambda}(\Gamma)}
\phi_{\eta}^{-1}Y_{j}\phi_{\eta}d\nu\right)^{2}-
\frac{Q-2}{4}\sum_{j=1}^{4n}
\left(\int_{\tilde{\Lambda}(\Gamma)}
\phi_{\eta}^{-1}Y_{j}\phi_{\eta}d\nu\right)^{2}\\
&=\left(\frac{Q}{4}-\frac{\delta+1}{2}\right)
\left(\int_{\tilde{\Lambda}(\Gamma)}
\phi_{\eta}^{-2}\sum_{j=1}^{4n}|Y_{j}\phi_{\eta}|^{2}d\nu-
\sum_{j=1}^{4n}\left(\int_{\tilde{\Lambda}(\Gamma)}
\phi_{\eta}^{-1}Y_{j}\phi_{y}d\nu\right)^{2}\right)\\
&\quad-\frac{Q}{4}\int_{\tilde{\Lambda}
(\Gamma)}\phi_{\eta}^{-2}\sum_{j=1}^{4n}|Y_{j}\phi_{\eta}|^{2}d\nu
-\int_{\tilde{\Lambda}(\Gamma)}\phi_{\eta}^{-1}\Delta_{0}\phi_{\eta}d\nu,
\end{aligned}
\end{equation}
by $\Delta_{0}=-\frac{1}{2}\sum_{j=1}^{4n}Y_{j}Y_{j}$. Direct calculation gives
\begin{align*}
\Delta_{0}\phi_{\eta}=\frac{2}{2-Q}G_{0}^{\frac{Q}{2-Q}}\Delta_{0}
G_{0}-\frac{Q}{(2-Q)^{2}}G_{0}^{\frac{2Q-2}{2-Q}}
\sum_{j=1}^{4n}|Y_{j}G_{0}|^{2},
\end{align*}
and
\begin{align*}
\sum_{j=1}^{4n}|Y_{j}\phi_{\eta}|^{2}=\frac{4}{(2-Q)^{2}}G_{0}^{\frac{2Q}{2-Q}}
\sum_{j=1}^{4n}|Y_{j}G_{0}|^{2}.
\end{align*}
Moreover, $\Delta_{0}G_{0}(\xi,\eta)=0$ for $\xi\neq \eta$.
We see that the last two terms in (\ref{514}) are canceled. Thus,
\begin{align*}
s_{\tilde{g}_{\Gamma},\mathbb{Q}}=2(Q+2)\left(\frac{Q}{4}
-\frac{\delta+1}{2}\right)e^{-2f}
\left(\int_{\tilde{\Lambda}(\Gamma)}\phi_{\eta}^{-2}
\sum_{j=1}^{4n}|(Y_{j}\phi_{\eta}|^{2}d\nu
-\sum_{j=1}^{4n}\left(\int_{\tilde{\Lambda}(\Gamma)}\phi_{\eta}^{-1}Y_{j}
\phi_{\eta}d\nu\right)^{2}\right).
\end{align*}
Let $A=(A_{jk})$ be the symmetric matrix with
\begin{align}
A_{jk}=\int_{\tilde{\Lambda}(\Gamma)}\phi_{\eta}^{-1}Y_{j}\phi_{\eta}\cdot
\phi_{\eta}^{-1}Y_{k}\phi_{\eta}d\nu
-\int_{\tilde{\Lambda}(\Gamma)}\phi_{\eta}^{-1}Y_{j}
\phi_{\eta}d\nu\int_{\tilde{\Lambda}(\Gamma)}\phi_{\eta}^{-1}Y_{k}
\phi_{\eta}d\nu
\end{align}
Note that for real number $a_{j},\ j=1,\cdots,4n,$ we have $$\sum_{j,k=1}^{4n}A_{jk}a_{j}a_{k}=\int_{\tilde{\Lambda}(\Gamma)}h^{2}
d\nu-\left(\int_{\tilde{\Lambda}(\Gamma)}h
d\nu\right)^{2},$$
with $h=\sum_{j=1}^{4n}a_{j}\phi_{\eta}^{-1}Y_{j}
\phi_{\eta}.$ So it is easy to see that $A$ is non-negative by the Cauchy-Schwarz inequality and $\int_{\tilde{\Lambda}(\Gamma)}d\nu=1$, i.e.
$A(\xi)\geq0.$ Then,
\begin{align}\label{519}
s_{\tilde{g}_{\Gamma},\mathbb{Q}}=
8(n+2)\left(n+1-\frac{\delta}{2}\right)e^{-2f}Tr A(\xi).
\end{align}
Hence, the result follows from the fact that $Tr A(\xi)$ is nowhere vanishing on $ \Omega(\Gamma)$ by the following lemma.
\end{proof}

\begin{lem}
If $Tr A(\xi)$
vanishes at some point $\xi\in\widetilde{\Omega}(\Gamma)$ and $\delta(\Gamma)\neq 2n+2,$ then the limit set $\widetilde{\Lambda}(\Gamma)$ is exactly one
point, and $A$ vanishes identically.
\end{lem}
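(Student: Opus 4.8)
The plan is to analyze the equality case in the Cauchy–Schwarz inequality that was used to show $A(\xi)\geq 0$. Recall that for real numbers $a_1,\dots,a_{4n}$ one has
\begin{align*}
\sum_{j,k=1}^{4n}A_{jk}(\xi)a_ja_k=\int_{\tilde\Lambda(\Gamma)}h^2\,d\nu-\left(\int_{\tilde\Lambda(\Gamma)}h\,d\nu\right)^2,\qquad h=\sum_{j=1}^{4n}a_j\phi_\eta^{-1}Y_j\phi_\eta,
\end{align*}
where $\nu=\nu_\xi$ is the probability measure on $\tilde\Lambda(\Gamma)$ depending on $\xi$. First I would observe that if $\operatorname{Tr}A(\xi)=0$, then since $A(\xi)\geq 0$ all the diagonal entries vanish, hence $A(\xi)=0$ (a nonnegative symmetric matrix with zero trace is zero), and therefore the right-hand side above vanishes for \emph{every} choice of $(a_j)$. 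By the equality case of Cauchy–Schwarz (equivalently, the variance of $h$ with respect to $\nu$ is zero), for each fixed $(a_j)$ the function $\eta\mapsto h(\eta)=\sum_j a_j\,\phi_\eta(\xi)^{-1}Y_j\phi_\eta(\xi)$ is constant $\nu$-a.e. on $\tilde\Lambda(\Gamma)$. Since this holds for all $(a_j)$, the $\mathbb{R}^{4n}$-valued function $\eta\mapsto\big(\phi_\eta(\xi)^{-1}Y_1\phi_\eta(\xi),\dots,\phi_\eta(\xi)^{-1}Y_{4n}\phi_\eta(\xi)\big)$ is $\nu$-a.e. constant, hence constant on $\operatorname{supp}\nu=\tilde\Lambda(\Gamma)$ (recall $\nu$ and $\tilde\mu_\Gamma$ are mutually absolutely continuous, and $\tilde\mu_\Gamma$ is supported on all of $\tilde\Lambda(\Gamma)$ by Theorem \ref{t5.2}).

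Next I would make this explicit using $\phi_\eta(\xi)=C_Q^{\frac{2}{2-Q}}\|\xi^{-1}\eta\|^2$. A direct computation of $Y_j\|\xi^{-1}\eta\|^2$ on the quaternionic Heisenberg group (using the left-invariant vector fields $Y_j$ of \eqref{2.43} and the norm \eqref{124}) shows that the vector $\big(\phi_\eta^{-1}Y_j\phi_\eta\big)_{j}$ at the point $\xi$ is, up to a universal nonzero scalar, the gradient of $\log\|\xi^{-1}\eta\|^2$, which is a specific real-analytic function of $\eta$ that is nonconstant in $\eta$ for $\eta$ ranging over any set containing more than one point. Concretely, as $\eta$ varies this expression records (the horizontal part of) the "direction from $\xi$ to $\eta$," so demanding that it be independent of $\eta\in\tilde\Lambda(\Gamma)$ forces $\tilde\Lambda(\Gamma)$ to be a single point. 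I would phrase this as: the map $\eta\mapsto\nabla_\eta$(horizontal gradient at $\xi$ of $\log\phi_\eta$) is injective, or at least has the property that its level sets are points; hence $\tilde\Lambda(\Gamma)$ is a singleton.

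Once $\tilde\Lambda(\Gamma)=\{\eta_0\}$ is a single point, the measure $\tilde\mu_\Gamma$ is a Dirac mass at $\eta_0$, so $\nu_\xi=\delta_{\eta_0}$ for every $\xi$, the variance terms in $A_{jk}(\xi)$ vanish identically, and thus $A\equiv 0$ on all of $\widetilde\Omega(\Gamma)$. This also gives $s_{\tilde g_\Gamma,\mathbb{Q}}\equiv 0$ by \eqref{519}; note the hypothesis $\delta(\Gamma)\neq 2n+2$ is only needed to keep the prefactor $8(n+2)(n+1-\delta/2)$ nonzero so that the vanishing of the scalar curvature is genuinely detected by $\operatorname{Tr}A$, and it is consistent with the conclusion because a single-point limit set corresponds to an elementary group (conjugate into a parabolic subgroup) for which $\delta(\Gamma)<2n+2$.

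The main obstacle is the middle step: one must verify carefully that the $\mathbb{R}^{4n}$-valued function $\eta\mapsto\big(\phi_\eta(\xi)^{-1}Y_j\phi_\eta(\xi)\big)_j$ separates points of $\mathscr H^n$ (for fixed $\xi$), i.e. that it cannot be constant on a set with two distinct points. This is a concrete but slightly delicate computation with the horizontal vector fields on $\mathscr H^n$ and the quaternionic Heisenberg norm; the cleanest route is probably to transport the question back to the sphere via the Cayley transform, where $G_S$ and its horizontal derivatives have a transparent description in terms of the fundamental invariant \eqref{580}, or alternatively to exploit the transitivity of the automorphism group of $\mathscr H^n$ to normalize $\xi=0$ and then argue directly. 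Everything else is soft: nonnegative-matrix algebra, the equality case of Cauchy–Schwarz, and the support property of $\tilde\mu_\Gamma$ from Theorem \ref{t5.2}.
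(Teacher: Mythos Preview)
Your proposal is correct and follows precisely the standard argument: zero trace of a nonnegative matrix forces $A(\xi)=0$, the equality case of Cauchy--Schwarz then makes each $\eta\mapsto\phi_\eta^{-1}Y_j\phi_\eta$ constant on $\operatorname{supp}\nu=\tilde\Lambda(\Gamma)$, and an explicit computation shows this map separates points of $\mathscr H^n$. The paper does not supply its own proof but simply refers to Lemma~4.1 of \cite{wang1} (the CR case), where exactly this argument is carried out; your write-up is in fact more detailed than what the paper provides.
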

The proof is exactly the same as that in the CR case (cf. Lemma 4.1 in \cite{wang1}).

\appendix
\section{The Green function of the qc Yamabe operator on the quaternionic Heisenberg group}
See also \S 2.4 in \cite{Ivanov} for a similar calculation for regular solutions of the qc Yamabe equation.

\emph{Proof of Proposition \ref{1.10}.} Without loss of generality, we assume $\xi=0.$ Denote $\eta=(y,t).$ Recall that
$\Delta_{0}=-\frac{1}{2}\sum_{j=1}^{4n}Y_{j}Y_{j}.$ We have
\begin{equation}\begin{aligned}\label{app1}
Y_{4l+j}\left(\frac{1}{(\|\eta\|^{4}+\epsilon^{2})^{\frac{Q-2}{4}}}\right)&=
-\frac{Q-2}{4}\frac{Y_{4l+j}\|\eta\|^{4}}
{(\|\eta\|^{4}+\epsilon^{2})^{\frac{Q+2}{4}}},
\end{aligned}
\end{equation}
with
\begin{align}\label{app1'}
Y_{4l+j}\|\eta\|^{4}=4|y|^{2}y_{4l+j}+4\sum_{s=1}^{3}\sum_{k=1}^{4}
b_{kj}^{s}y_{4l+k}t_{s},
\end{align}
by using the expression of the vector field $Y_{4l+j}$ in (\ref{2.43}). Note that $\frac{1}{\sqrt{2}}Y_{j}$ is an orthonormal basis. Then  we get
\begin{equation}\label{a.3}
\begin{aligned}
\Delta_{0}\left(\frac{1}{(\|\eta\|^{4}+\epsilon^{2})^{\frac{Q-2}{4}}}\right) =-\frac{Q-2}{4}\left[\frac{\Delta_{0}\|\eta\|^{4}}
{(\|\eta\|^{4}+\epsilon^{2})^{\frac{Q+2}{4}}}
+\frac{(Q+2)}{4}\frac{\left|\nabla_{0}\|\eta\|^{4}\right|^{2}}
{(\|\eta\|^{4}+\epsilon^{2})^{\frac{Q+6}{4}}}\right],
\end{aligned}
\end{equation}
where
\begin{equation}\label{A2}
\begin{aligned}
2\left|\nabla_{0}\|\eta\|^{4}\right|^{2}&=\sum_{l=0}^{n-1}\sum_{j=1}^{4}
\left(Y_{4l+j}\|\eta\|^{4}\right)^{2}\\
&=16\sum_{l=0}^{n-1}\sum_{j=1}^{4}\left(|y|^{4}y_{4l+j}^{2}
+\sum_{s,s'=1}^{3}\sum_{k,k'=1}^{4}b_{k'j}^{s'}
b_{kj}^{s}y_{4l+k'}y_{4l+k}t_{s}t_{s'}\right)=16\|\eta\|^{4}|y|^{2},
\end{aligned}
\end{equation}
by using (\ref{app1'}) and $\sum_{j=1}^{4}b_{kj}^{s}b_{jk'}^{s'}=(b^{s}b^{s'})_{kk'},$  antisymmetry for $b^{s}b^{s'}$ of $s\neq s'$ and
$\left(b^{s}\right)^{2}=-{\rm id}.$
Similarly, by (\ref{app1'}), we get
\begin{equation}\label{A1}
\begin{aligned}
\Delta_{0}\|\eta\|^{4}=-\frac{1}{2} \sum_{l=0}^{n-1}\sum_{j=1}^{4}\left( {8y_{4l+j}^{2}+4|y|^{2}+8\sum_{s=1}^{3}
\sum_{k,k'=1}^{4}b_{k'j}^{s}
b_{kj}^{s}y_{4l+k'}y_{4l+k}}\right)
=-2(Q+2)|y|^{2}.
\end{aligned}
\end{equation}
Then apply (\ref{A2}) and (\ref{A1}) to (\ref{a.3}) to  get
\begin{equation}
\begin{aligned}\label{a4}
\Delta_{0}\left(\frac{1}{(\|\eta\|^{4}+\epsilon^{2})^{\frac{Q-2}{4}}}\right)
=\frac{(Q-2)(Q+2)|y|^{2}\epsilon^{2}}
{2(\|\eta\|^{4}+\epsilon^{2})^{\frac{Q+6}{4}}}.
\end{aligned}
\end{equation}
Now letting $\epsilon\rightarrow 0$,  we see that for any $u\in C^{\infty}_{0}(\mathscr{H})$,
\begin{equation*}
\begin{aligned}
\int_{R^{4n+3}} L_{0}u\cdot \frac{C_{Q}}{(|y|^{4}+|{t}|^{2})^{\frac{Q-2}{4}}}
=&\lim_{\epsilon\rightarrow 0}\int_{R^{4n+3}} L_{0}u\cdot \frac{C_{Q}}{(|y|^{4}+|{t}|^{2}+\epsilon^{2})^{\frac{Q-2}{4}}}\\
=&\lim_{\epsilon\rightarrow 0}\int_{R^{4n+3}} u\cdot C_{Q}b_{n} \Delta_{0}\left(\frac{1}{(|y|^{4}+|{t}|^{2}+\epsilon^{2}
)^{\frac{Q-2}{4}}}\right)
=u(0)
\end{aligned}
\end{equation*}
by (\ref{a4}) and the formula (\ref{146}) for $ C_{Q}^{-1}.$ The proposition  is proved. \quad \quad \quad \quad \quad \quad \quad \quad \ \ {$\Box$}

For $g|_{\xi}=\frac{g_{0}|_{\xi}}{\|\xi\|^{2}},$ we can write $g=\phi^{\frac{4}{Q-2}}g_{0}$ with $\phi={\|\xi\|^{-\frac{Q-2}{2}}}.$ It follows from the
transformation law of the scalar curvature (\ref{122}) that
\begin{equation}\begin{aligned}\label{8899}
s_{g,\mathbb{Q}}&=\phi^{-\frac{Q+2}{Q-2}}b_{n}\Delta_{0}\phi
=\|\xi\|^{\frac{Q+2}{2}}b_{n}\Delta_{0}
\|\xi\|^{-\frac{Q-2}{2}}\\
&=-\frac{Q-2}{8}b_{n}\|\xi\|^{\frac{Q+2}{2}}\left(\frac{Q+6}{8}
\|\xi\|^{-\frac{Q+14}{2}}\left|\nabla_{0}\|\xi\|^{4}\right|^{2}+\|\xi\|^{-\frac{Q+6}{2}}
\Delta_{0}\|\xi\|^{4}\right)\\&=\frac{(Q-2)(Q+2)}{2}\frac{|y|^{2}}{\|\xi\|^{2}}
\end{aligned}\end{equation}
by (\ref{A2}) and (\ref{A1}), where $\eta=(y,t)$. Similarly, we have
\begin{equation}\begin{aligned}\label{8800}
\left|\nabla_{g}\left(\ln\frac{1}{\|\xi\|}\right)\right|^{2}=\|\xi\|^{2}
\left|\nabla_{0}\left(\ln\frac{1}{\|\xi\|}\right)\right|^{2}
=\frac{1}{16\|\xi\|^{4}}\left|
\nabla_{0}\|\xi\|^{4}\right|^{2}=\frac 12 |y|^{2}.
\end{aligned}\end{equation}

\end{document}